\numberwithin{figure}{section}
\numberwithin{table}{section}
\numberwithin{equation}{section}
\newenvironment{abstr}[1]{ \vspace{.05in}\footnotesize
	\parindent .2in
	{\upshape\bfseries #1. }\ignorespaces}{\par\vspace{.1in}}
\newenvironment{Abstract}{\begin{abstr}{Abstract}}{\end{abstr}}
\newenvironment{keywords}{\begin{abstr}{Key words}}{\end{abstr}}
\newenvironment{AMS}{\begin{abstr}{AMS subject classifications}}{\end{abstr}}
\newtheorem{theorem}{Theorem}[section]
\newtheorem{lemma}[theorem]{Lemma}
\newtheorem{corollary}[theorem]{Corollary}
\theoremstyle{definition}
\newtheorem{definition}[theorem]{Definition}
\newtheorem{remark}[theorem]{Remark}
\crefname{theorem}{Theorem}{Theorems}
\crefname{lemma}{Lemma}{Lemmas}
\crefname{remark}{Remark}{Remarks}
\crefname{corollary}{Corollary}{Corollaries}
\newcommand{\eff}{0}
\newcommand{\HMM}{\mathrm{H}}
\newcommand{\firstindex}{i}
\newcommand{\secondindex}{j}
\newcommand{\Lp}[2]{\mathrm{L}^{#1}\left(#2\right)}
\newcommand{\Hk}[2]{\mathrm{H}^{#1}\left(#2\right)}
\newcommand{\Hper}[1]{\mathrm{H}^1_\#\left(#1\right)}
\newcommand{\Hcurl}[1]{\mathrm{H}\left(\curl,#1\right)}
\newcommand{\Hcurldirichlet}[1]{\mathrm{H}_{0}\left(\curl,#1\right)}
\newcommand{\Ck}[2]{\mathrm{C}^{#1}\left(#2\right)}
\newcommand{\Wkp}[3]{\mathrm{W}^{#1,#2}\left(#3\right)}
\newcommand{\unitcell}{Y}
\newcommand{\samplingdomainname}[1]{\unitcell^{#1}}
\newcommand{\samplingdomain}[2]{\unitcell^{#1}\left(#2\right)}
\newcommand{\spaceVmac}{{\mathrm{V}^{\mrm{mac}}}}
\newcommand{\spaceVmic}{{\mathrm{V}^{\mrm{mic}}}}
\newcommand{\spaceVH}{\mathrm{V}_{H}}
\newcommand{\spaceVh}{\mrm{V}^{h}}
\newcommand{\spaceX}{\mrm{X}}
\newcommand{\intt}[2]{\int_{0}^{t}#1 \D #2}
\newcommand{\identityoperator}{\operatorname{id}}
\newcommand{\norm}[2]{\left\lVert#1\right\rVert_{#2}}
\newcommand{\seminorm}[2]{\left\lvert#1\right\rvert_{#2}}
\newcommand{\linearform}[2]{#1 \! \left(#2\right)}
\newcommand{\bilinearform}[3]{#1 \! \left(#2,#3\right)}
\newcommand{\namemicrobilinearform}[1]{s_{#1}}
\newcommand{\microkappabilinearform}[3]{\bilinearform{s_{#1}^{\delta}}{#2}{#3}}
\newcommand{\discretemicrobilinearform}[3]{\bilinearform{s_{#1}^{h}}{#2}{#3}}
\newcommand{\remainder}[2]{\Lambda\!\left(#1,#2\right)}
\newcommand*{\wellposedness}{well-posedness }
\newcommand*{\wellposed}{well-posed }
\newcommand*{\Wellposedness}{Well-posedness }
\newcommand{\R}{\mathbb{R}}
\newcommand{\N}{\mathbb{N}}
\newcommand{\operatordomain}{\mathcal{D}}
\newcommand{\computationaldomain}{\Omega}
\newcommand{\quadratureindex}{q}
\newcommand{\weight}{\gamma}
\newcommand{\coercivityconstant}{\alpha}
\newcommand{\boundednessconstant}{C_{\vec{M}}}
\newcommand{\genericconstant}{C}
\newcommand{\electricfield}{\vec{E}}
\newcommand{\delelectricfield}{\electricfield^{\delta}}
\newcommand{\effelectricfield}{\electricfield^{\eff}}
\newcommand{\magneticfield}{\vec{H}}
\newcommand{\delmagneticfield}{\magneticfield^{\delta}}
\newcommand{\effmagneticfield}{\magneticfield^{\eff}}
\newcommand{\currentdensity}{\vec{J}}
\newcommand{\polarizationfield}{\vec{P}}
\newcommand{\solutionu}{\vec{u}}
\newcommand{\delsolutionu}{\solutionu^{\delta}}
\newcommand{\effsolutionu}{\solutionu^{\eff}}
\newcommand{\HMMsolutionu}{\solutionu^{\HMM}}
\newcommand{\mcal}[1]{\mathcal{#1}}
\newcommand{\mbb}[1]{\mathbb{#1}}
\newcommand{\mrm}[1]{\mathrm{#1}}
\newcommand{\eps}{\varepsilon}
\newcommand{\effeps}{\eps^{\eff}}
\newcommand{\deleps}{\eps^{\delta}}
\newcommand{\effmu}{\mu^{\eff}}
\newcommand{\delmu}{\mu^{\delta}}
\newcommand{\effsigma}{\sigma^{\eff}}
\newcommand{\delsigma}{\sigma^{\delta}}
\newcommand{\D}{\: \mathrm{d}}
\newcommand{\grady}{\nabla_{y}}
\newcommand{\with}{\,:\,}
\newcommand{\euler}{\mathrm{e}}
\newcommand{\ek}{\vec{e}_{\firstindex}}
\newcommand{\el}{\vec{e}_{\secondindex}}
\newcommand{\wM}{w^{\mrm{M}}}
\newcommand{\wkM}{\wM_{\firstindex}}
\newcommand{\wlM}{\wM_{\secondindex}}
\newcommand{\wMh}{w^{\mrm{M},h}}
\newcommand{\wkMh}{\wMh_{\firstindex}}
\newcommand{\wlMh}{\wMh_{\secondindex}}
\newcommand{\wG}{\overline{w}}
\newcommand{\wkG}{\wG_{\firstindex}}
\newcommand{\wlG}{\wG_{\secondindex}}
\newcommand{\wGh}{\wG^{h}}
\newcommand{\wkGh}{\wGh_{\firstindex}}
\newcommand{\wlGh}{\wGh_{\secondindex}}
\newcommand{\wN}{w^{0}}
\newcommand{\wlN}{\wN_{\secondindex}}
\newcommand{\wNh}{w^{0,h}}
\newcommand{\wlNh}{\wNh_{\secondindex}}
\newcommand{\dimMaxwell}{n}
\newcommand{\dimMicro}{N}
\newcommand{\eqdot}{\, .}
\newcommand{\eqcomma}{\, ,}
\newcommand{\Nedelec}[2]{\mrm{V}^{#1}\left(\curl,#2\right)}
\newcommand{\DirichletNedelec}[2]{\mrm{V}^{#1}_{0}\left(\curl,#2\right)}
\newcommand{\curl}{\operatorname{curl}}
\renewcommand{\div}{\operatorname{div}}
\def\vec#1{\ensuremath{\mathchoice
		{\mbox{\boldmath$\displaystyle\mathbf{#1}$}}
		{\mbox{\boldmath$\textstyle\mathbf{#1}$}}
		{\mbox{\boldmath$\scriptstyle\mathbf{#1}$}}
		{\mbox{\boldmath$\scriptscriptstyle\mathbf{#1}$}}}}
\begin{document}
	
	\title{The Heterogeneous Multiscale Method for dispersive Maxwell systems%
		\thanks{The work conducted at KIT was funded by the Deutsche Forschungsgemeinschaft (DFG, German Research Foundation) -- Project-ID 258734477 -- SFB 1173}
	}
	\author{\sffamily Philip Freese\footnotemark[2]}
	\date{}
	\maketitle
	
	\renewcommand{\thefootnote}{\fnsymbol{footnote}}
	\footnotetext[2]{Department of Mathematics, University of Augsburg, Universitätsstr. 14, 86159 Augsburg, Germany, \\\href{mailto:philip.freese@uni-a.de}{philip.freese@uni-a.de}}
	\renewcommand{\thefootnote}{\arabic{footnote}}
	\vspace{-2em}
	\begin{Abstract}
		In this work, we apply the finite element heterogeneous multiscale method to a class of dispersive first-order time-dependent Maxwell systems. For this purpose, we use an analytic homogenization result, which shows that the effective system contains additional dispersive effects. We provide a careful study of the (time-dependent) micro problems, including $\mrm{H}^2$ and micro errors estimates. Eventually, we prove a semi-discrete error estimate for the method.
	\end{Abstract}

	% REQUIRED
	\begin{keywords}
		heterogeneous multiscale method, Maxwell's equations, error analysis
	\end{keywords}
	
	% REQUIRED
	\begin{AMS}
		65M60, 78M40, 35B27, 65M12, 65M15
	\end{AMS}

	\section{Introduction}
	The simulation of electromagnetic wave propagation in heterogeneous structures has been and continues to be a challenging task. Especially, the class of \textit{metamaterials}, first constructed in \cite{Smith2000}, attracts high interest due to their extraordinary material properties such as negative refraction, perfect lensing or electromagnetic cloaking \cite{Pendry2000,Shelby2001,LiHuang2013}. Those artificial materials consist of so-called \textit{unit cells} whose size, denoted by $\delta>0$, is significantly smaller than the wavelength of an incident wave. While the propagation of electromagnetic waves in general is modeled using Maxwell's equations, heterogeneous materials enter these equations via rapidly varying coefficients. In the simplest case covered by our analysis, this means that the electric permittivity $\deleps$, the magnetic permeability $\delmu$ and the electric conductivity $\delsigma$ are highly oscillatory, which is indicated by the superscript $\delta$. Therefore, also the electric field $\delelectricfield$ and the magnetic field $\delmagneticfield$ depend on the characteristic size of the unit cells. Given the current density $\currentdensity^{\delta} = \sigma^{\delta} \delelectricfield + \currentdensity^{\delta}_\mrm{ext}$, the goal is to find the electromagnetic field that is the solution of:
	\begin{subequations}\label{sys:Maxwellconductivity_het}
		\begin{align}
		&\deleps(x) \partial_{t} \delelectricfield(t,x) + \delsigma(x) \delelectricfield(t,x)  - \curl \delmagneticfield(t,x)  = -\currentdensity^{\delta}_\mrm{ext}(t,x)  \eqcomma \\
		&\delmu(x) \partial_{t} \delmagneticfield(t,x)  + \curl \delelectricfield(t,x)  = \vec{0} \eqcomma
		\end{align}
	\end{subequations}
	subject to suitable initial and boundary conditions. Even with modern computer technology, the resolution of the microscopic and macroscopic scale simultaneously is not possible. Thus, a suitable multiscale method has to be applied to solve the heterogeneous Maxwell system. 
	
	The periodicity and scale separation of the materials under consideration suggests to use analytic \textit{homogenization} to derive an effective counterpart to the heterogeneous system. This homogenized set of equations describes a material that reflects the effective behavior that we are actually interested in. Except from special cases, however, the effective parameters are not known analytically. Instead, they are represented as averages over unit cells and include the solutions of partial differential equations, the so-called \textit{micro problems}, posed on the unit cells. An overview of the method of homogenization is found in \cite{Jikov1994,Cioransecu1999}. More specific for Maxwell's equations, we refer to first results in \cite{Sanchez1980} and \cite{Wellander2001} for the time-domain homogenization of the system \cref{sys:Maxwellconductivity_het}. The resulting effective Maxwell system consists of parameters $\effeps$, $\effmu$ and $\effsigma$ which are the homogenized variants of the permittivity, permeability and conductivity. 
	Here, the superscript $^{\eff}$ indicates that these quantities result from a suitable limit for $\delta\to 0$. 
	The effective electric and magnetic field $\effelectricfield$, $\effmagneticfield$ are solution of the system:
	\begin{subequations}
		\begin{align*}
		&\effeps \partial_{t} \effelectricfield(t,x)  + \effsigma \effelectricfield(t,x)  + \int_0^t \chi^{\eff}(t-s) \effelectricfield(s,x)  \D s - \curl \effmagneticfield(t,x)  =  -\currentdensity^{\eff}_\mrm{ext}(t,x)  \eqcomma \\
		&\effmu \partial_{t} \effmagneticfield(t,x)  + \curl \effelectricfield(t,x)  = \vec{0} \eqdot
		\end{align*}
	\end{subequations}
	The additional parameter $\chi^{\eff}$ is a non-local material law, which is solely present due to the damping of the conductivity. Precisely this expression, describing dispersive effects, is associated with the experimentally observed properties of metamaterials. This is consistent with physically derived models for metamaterials, such as the Drude \cite{Pendry1996} or the Lorentz model \cite{Smith2000Lorentz}. The rigorous derivation of the effective system is based on the concept of two-scale convergence \cite{Nguetseng1989,Allaire1992}. We follow the approach from \cite{Bokil2018} that includes linear dispersive effects and is thus valid for a far more general class of Maxwell systems. In \cite{Bossavit2005} even time-dependent parameters are considered. The latter references use the method of periodic unfolding \cite{Cioranescu2002,Cioranescu2008}, a generalization of two-scale convergence. 
	
	In this work, we aim to approximate the effective solution using the finite element method. Since no explicit representation of the effective parameters is available, a direct application of the FEM is not possible. Thus, we also use the finite element scheme to approximate the effective parameters. This combination of a macroscopic and a microscopic solver fits in the framework of the Heterogeneous Multiscale Method (HMM) \cite{Engquist2003,AbdulleEEngquist2012,Abdulle2009,E2005} which provides a general framework for the solution of multiscale problems. The HMM applied to the Maxwell system in time harmonic formulation is presented in \cite{Ciarlet2017,Henning2016}. An application to the time-domain Maxwell system without any damping is considered in \cite{Hochbruck2018}. As alternative methods to cover the multiscale character, we mention the Multiscale Finite Element Method \cite{Hou1997,Efendiev2009} and the multiscale hybrid-mixed finite element method from \cite{Harder2013}. The latter one has recently been applied to an instantaneous heterogeneous Maxwell system in \cite{Lanteri2018}. Moreover, multiscale methods for the Maxwell's equations with memory effects are presented in \cite{Zhang2010}. Finally, we mention the Localized Orthogonal Decomposition \cite{Malqvist2014,Henning2014,Peterseim2016}. First introduced for the Laplace operator, it was shown in \cite{Gallistl2018} that this technique is also applicable to the (stationary) Maxwell system. The aforementioned multiscale methods are especially useful for unstructured heterogeneities. Here, however, we consider (locally) periodic structures and thus, the HMM is an appropriate choice. 
	
	Our main contribution is the semi-discrete error estimate for the application of the HMM to an effective Maxwell system including dispersive effects. We extend techniques from \cite{Hochbruck2018,Hipp2017} to the integro-differential structure of the effective problem. Moreover, we provide new micro error estimates that are crucial in the analysis of the proposed method. For these estimates, a $\mrm{H}^{2}$ result for the so-called Sobolev equation is mandatory, which we also prove.
	
	The structure of this paper is as follows. In \cref{sec:The heterogeneous Maxwell system}, we introduce general damped Maxwell systems and our main assumptions. We recapture a homogenization result in \cref{sec:Homogenization of Maxwells equations in locally periodic media} and present a reformulation of the effective Maxwell system. Further, in \cref{sec:Wellposedness of microscopic and macroscopic problems}, we investigate the \wellposedness and stability of the microscopic and macroscopic problems that arise from homogenization and derive the $\mrm{H}^{2}$ estimate. In \cref{sec:The Finite Element Heterogeneous Multiscale Method} we apply the finite element heterogeneous multiscale method to the effective Maxwell system. Eventually, \cref{sec:Semi-discrete error analysis} is concerned with the semi-discrete error analysis, including the examination of the micro errors.

	\subsection{Notation}
	Throughout, we use a generic constant $\genericconstant>0$, which can have different values at each appearance. Let $\Omega\subseteq \R^{3}$, referred to as the macroscopic domain, be open, bounded and simply connected. Furthermore, $T$ denotes the final time of interest, and we introduce the (microscopic) unit cell $\unitcell=\left(0,1\right)^{3}$. We abbreviate the mean as $\fint_{\unitcell} \cdot \D y = \frac{1}{\seminorm{\unitcell}{}}\int_{\unitcell} \cdot \D y$. Moreover, with $\el \in \R^{d}$, for $\secondindex \in \{1,\dots,d\}$ we denote the $\secondindex$-th canonical basis vector of $\R^{d}$. A bold $\vec{0}$ represents a vector or a matrix with all entries equal to $0$. By $\Lp{2}{\Omega}$ we denote the space of square integrable functions over $\Omega$, which is equipped with the inner product $\bilinearform{}{\cdot}{\cdot}_{\Lp{2}{\Omega;\R^{d}}} = \bilinearform{}{\cdot}{\cdot}$ and norm $\norm{\cdot}{\Lp{2}{\computationaldomain;\R^{d}}} = \norm{\cdot}{}$. The Sobolev space of $\mrm{L}^{2}$-functions with $k$-th weak derivative in $\mrm{L}^{2}$ is denoted by  $\Hk{k}{\Omega}$. Additionally, the space $\Hper{\unitcell}$ contains all functions in $\Hk{1}{\unitcell}$ that are periodic and have zero mean value. Furthermore, the space $\Hcurl{\Omega}$ consists of the $\mrm{L}^{2}$-functions whose weak curl is also in $\Lp{2}{\Omega}$. Finally, to incorporate perfectly conducting boundary conditions, we introduce $\Hcurldirichlet{\Omega}$, which is the closure of $\mrm{C}_{0}^{\infty}\left(\computationaldomain;\R^{3}\right)$ with respect to the $\Hcurl{\Omega}$-norm.

	\section{The heterogeneous Maxwell system}\label{sec:The heterogeneous Maxwell system}
	Throughout, we denote by $\delta>0$ the characteristic length scale of microscopic oscillations. The parameters in the Maxwell system are assumed to be locally periodic.
	\begin{definition}[Locally periodic parameter]\label{def:locally periodic parameter}
		Let $\delta>0$, $n\in\N$. A tensor $\alpha^{\delta}:\computationaldomain \rightarrow \R^{\dimMaxwell\times \dimMaxwell}$ is called locally $\delta$-periodic if there exists a tensor $\alpha: \computationaldomain \times\R^{3} \rightarrow \R^{\dimMaxwell\times \dimMaxwell}$, which is $\unitcell$-periodic in its second argument and $\alpha^{\delta}(x) = \alpha\left(x, \frac{x}{\delta}\right)$ holds for almost every $x\in\computationaldomain$. 
	\end{definition}
	The example \eqref{sys:Maxwellconductivity_het} fits in a rather general class of (damped) Maxwell systems that describe a broad class of linear dispersive materials. The dispersion, representing frequency dependent material response, enters Maxwell's equations via the polarization and magnetization. Here, we only consider electric effects, and thus assume that no magnetization is present. This is just for convenience, and we point out that all results below hold true for magnetic effects as well. We consider the polarization $\polarizationfield^{\delta}$ to be given as the solution of an $N_{E}$-th order ODE, $N_{E}\geq 0$, which can be written as a system of first order ODE's. Therefore, introduce the collection of all time derivatives of $\polarizationfield^{\delta}$ up to order $N_{E}-1$ as
	\begin{align*}
	\mbb{P}^{\delta}(t,x) = \begin{pmatrix}
	\polarizationfield^{\delta}(t,x)^{T} & \partial_{t} \polarizationfield^{\delta}(t,x)^{T} & \dots & \partial_{t}^{N_{E} - 1} \polarizationfield^{\delta}(t,x)^{T} 
	\end{pmatrix}^{T} \in\R^{3N_{E}} \eqdot
	\end{align*}
	The polarizations that fit in our model are given as solutions of
	\begin{align*}
	\vec{M}^{\delta}_{\mbb{P}}(x) \partial_{t} \mbb{P}^{\delta}(t,x) + \vec{R}^{\delta}_{\mbb{PP}}(x) \mbb{P}^{\delta}(t,x) + \vec{R}^{\delta}_{\mbb{P}\mrm{E}}(x) \delelectricfield(t,x) = \vec{0}_{3N_{E}} \eqcomma \quad \mbb{P}^{\delta}(0,x) = \vec{0}_{3 N_{E}} \eqcomma
	\end{align*}
	where the dimensions of the matrices are $\vec{M}^{\delta}_{\mbb{P}}$, $\vec{R}^{\delta}_{\mbb{PP}}\in\R^{3N_{E}\times 3N_{E}}$ and $\vec{R}^{\delta}_{\mbb{P}\mrm{E}} \in \R^{3N_{E} \times 3}$. Examples that fit in this framework are the Debye model for orientation polarization or the Lorentz model, see \cite[Section 6]{Bokil2018}. Now, we introduce the abstract solution
	\begin{align*}
	\delsolutionu(t,x) = \begin{pmatrix}
	\delelectricfield(t,x) \\ \mbb{P}^{\delta}(t,x) \\ \delmagneticfield(t,x)
	\end{pmatrix} \in \R^{3(2+N_{E})}
	\eqdot
	\end{align*}
	and the parameter matrices 
	\begin{align*}
	\vec{M}^{\delta}(x) = \begin{pmatrix}
	\deleps(x) & & \\ & \vec{M}^{\delta}_{\mbb{P}}(x) & \\ & & \delmu(x)
	\end{pmatrix} \eqcomma \quad
	\vec{R}^{\delta}(x) = \begin{pmatrix}
	\vec{R}^{\delta}_{\mrm{EE}}(x) & \vec{R}^{\delta}_{\mrm{E}\mbb{P}}(x) & \\ \vec{R}^{\delta}_{\mbb{P}\mrm{E}}(x) & \vec{R}^{\delta}_{\mbb{PP}}(x) & \\
	& & \vec{0} 
	\end{pmatrix} \eqdot
	\end{align*}
	Additionally, we denote the Maxwell operator and the right-hand side as
	\begin{align*}
	\vec{A}\delsolutionu(t,x) = \begin{pmatrix}
	-\curl \delelectricfield(t,x) \\
	\vec{0} \\
	\curl \delmagneticfield(t,x)
	\end{pmatrix}\eqcomma \quad \vec{g}^{\delta}(t,x) &= \begin{pmatrix}
	-\currentdensity^{\delta}_\mrm{ext}(t,x) \\ \vec{0} \\ \vec{0}
	\end{pmatrix}\eqdot
	\end{align*}
	The solution space for the Maxwell system is given as
	\begin{align}\label{def:spaceVmac}
	\spaceVmac \coloneqq \Hcurldirichlet{\computationaldomain}\times \Hcurl{\computationaldomain}^{N_{E}} \times \Hcurl{\computationaldomain} \eqdot
	\end{align}
	Hence, we obtain a family of electromagnetic fields $\vec{u}^\delta:[0,T]\to \spaceVmac$ that solves the heterogeneous evolution problem with perfectly conducting boundary condition
	\begin{subequations}\label{sys:Maxwell_Debye_ODE_heterogeneous}
		\begin{align}
		\vec{M}^\delta(x)\partial_t \vec{u}^\delta(t,x) + \vec{R}^\delta(x)\vec{u}^\delta(t,x) + \vec{A}\vec{u}^\delta(t,x) &= \vec{g}^\delta(t,x) &&\text{in }(0,T)\times\Omega \eqcomma \label{eq:eqsys:Maxwell_ODE_heterogeneous}\\
		\vec{u}^\delta(0,x) &= \vec{u}^\delta_0(x) &&\text{in }\Omega \eqcomma\\
		\vec{n}\times\vec{u}^\delta_1(t,x) &= \vec{0} &&\text{on }(0,T)\times\partial\Omega \eqdot
		\end{align}
	\end{subequations}
	We denote the dimension of this abstract Maxwell system by $\dimMaxwell\coloneqq 3 (2 + N_{E})$. For the \wellposedness of \cref{sys:Maxwell_Debye_ODE_heterogeneous} we assume that the parameter $\vec{M}^{\delta}$ is bounded and positive definite, i.e., $\vec{M}^{\delta}\in\Lp{\infty}{\computationaldomain;\R^{\dimMaxwell \times \dimMaxwell}}$ and there exist constants $\coercivityconstant,$ $\boundednessconstant>0$ such that
	\begin{align}\label{def:propertiesM}
	\coercivityconstant\lvert \vec{\Phi}\rvert^{2} \leq \vec{M}^{\delta}(x)\vec{\Phi} \cdot \vec{\Phi} \leq \boundednessconstant \lvert \vec{\Phi} \rvert^{2} \quad \text{for all }\vec{\Phi}\in\R^{\dimMaxwell} \eqdot
	\end{align}
	The damping parameter $\vec{R}^{\delta}$ is assumed to be bounded and positive semi-definite with
	\begin{align}\label{def:propertyR}
	\genericconstant_{\vec{R}} = \norm{\vec{R}}{\Lp{\infty}{\computationaldomain,\R^{\dimMaxwell\times \dimMaxwell}}} \eqdot
	\end{align}
	\Wellposedness of \cref{sys:Maxwell_Debye_ODE_heterogeneous} may be shown using semigroup theory \cite[Theorem 3.2.23]{Freese2021} or the Faedo--Galerkin approach \cite[Proposition 1]{Bossavit2005}.

	\section{Homogenization of Maxwell's equations in locally periodic media} \label{sec:Homogenization of Maxwells equations in locally periodic media}
	As explained, we are interested in the asymptotic behavior of the solution $\vec{u}^\delta$ as the periodicity length $\delta$ tends to zero. For the initial data and source, we assume
	\begin{equation}
	\vec{u}^\delta_0 \rightarrow \vec{u}_0 \quad \text{strongly in }\spaceVmac \eqcomma \quad \vec{g}^\delta \rightarrow \vec{g} \quad \text{strongly in }\Hk{1}{0,T;\Lp{2}{\Omega;\R^{\dimMaxwell}}} \eqdot \label{eq_initialcondition_source_assumption}
	\end{equation}
	We follow the approach of \cite{Bokil2018} where the periodic unfolding method \cite{Cioranescu2002, Cioranescu2008} is used to derive the effective system. In \cite[Theorem 3]{Bossavit2005}, and \cite[Theorem 5.1]{Bokil2018} it is shown that the sequence $\delsolutionu$ is uniformly bounded and converges weakly-* to a limit $\effsolutionu$ in $\Lp{\infty}{0,T;\spaceVmac}$. The following theorem states that the effective solution solves a global Maxwell problem in $(0,T)\times\Omega$. In addition, there exists a corrector $\vec{\overline{u}}$ that solves local diffusion problems posed in $(0,T)\times \unitcell$ for $x\in\Omega$.
	\begin{theorem}[{\cite[Theorem 5.2]{Bokil2018}}]\label{thm:homogeneous_system}
		For $\delta>0$, let $\vec{M}^\delta\in\Lp{\infty}{\Omega;\R^{\dimMaxwell\times \dimMaxwell}}$, symmetric and uniformly positive definite, and $\vec{R}^\delta\in\Lp{\infty}{\Omega;\R^{\dimMaxwell\times \dimMaxwell}}$, be two families of locally periodic parameters. Assume that the initial condition $\vec{u}_{0}^{\delta}$ and the source $\vec{g}^{\delta}$ satisfy \cref{eq_initialcondition_source_assumption}. Then there exists a unique effective field 
		\begin{equation*}
		\effsolutionu = \begin{pmatrix}
		{\effelectricfield} \\ 
		{\mbb{P}^{\eff}} \\
		{\effmagneticfield}
		\end{pmatrix}
		\in\Wkp{1}{\infty}{0,T;\Lp{2}{\Omega;\R^{\dimMaxwell}}}\cap \Lp{\infty}{0,T;\spaceVmac} \eqcomma
		\end{equation*}
		which solves the effective Maxwell system
		\begin{subequations}
			\begin{align*}
			\begin{split}
			\vec{M}^{\eff}(x)\partial_t \vec{u}^{\eff}(t,x) &+ \vec{\tilde{R}}^{\eff}(x) \vec{u}^{\eff}(t,x) + \partial_t \int_0^t\vec{\tilde{G}}^{\eff}(t-s,x)\vec{u}^{\eff}(s,x)\D s +\vec{A}\vec{u}^{\eff}(t,x) \\
			&= \vec{g}(t,x) - \vec{J}^{\eff}(t,x)\vec{u}_{0}(x) \quad\text{in }(0,T)\times \Omega \eqcomma
			\end{split} \\
			\vec{u}^{\eff}(0) &= \vec{u}_{0} \quad \text{ in }\Omega \eqcomma \\
			\vec{n}\times\vec{u}^{\eff}_{1} &= \vec{0}\quad \text{ on }(0,T)\times\partial\Omega \eqdot			
			\end{align*}
		\end{subequations}
		The parameters $\vec{M}^{\eff}$ and $\vec{\tilde{R}}^{\eff}$ only depend on the macroscopic (slow) variable $x$. The extra source $\vec{J}^{\eff}$ and the parameter $\vec{\tilde{G}}^{\eff}$ result from homogenization, the latter representing additional polarization effects.
	\end{theorem}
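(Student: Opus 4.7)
The plan is to follow the standard scheme of periodic homogenization via two-scale convergence / periodic unfolding, tailored to the first-order hyperbolic--parabolic block structure of \cref{sys:Maxwell_Debye_ODE_heterogeneous}, with the extra twist that the $\mbb{P}^{\delta}$-block is purely an ODE in $t$ with no $x$-derivatives, which is what ultimately produces the memory kernel $\vec{\tilde{G}}^{\eff}$.

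First I would establish $\delta$-uniform a priori estimates. Testing the system against $\delsolutionu$, the Maxwell block is skew-adjoint on $\spaceVmac$ under the perfectly conducting boundary condition, so the curl terms cancel; using the coercivity \cref{def:propertiesM} of $\vec{M}^{\delta}$, positive semi-definiteness of $\vec{R}^{\delta}$, the assumption \cref{eq_initialcondition_source_assumption} on the data, and Gr\"onwall's inequality yields a uniform bound of $\delsolutionu$ in $\Lp{\infty}{0,T;\Lp{2}{\computationaldomain;\R^{\dimMaxwell}}}$. Differentiating the equation in time (the coefficients are time-independent) and repeating the argument gives a uniform bound of $\partial_{t}\delsolutionu$ in the same space, and consequently of $\vec{A}\delsolutionu$ as well, so that $\delsolutionu$ is uniformly bounded in $\Lp{\infty}{0,T;\spaceVmac}$. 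Passing to a subsequence, weak-* compactness delivers a candidate limit $\effsolutionu$ in that space.

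Next I would apply the periodic unfolding operator $\mathcal{T}_{\delta}$ to each block of $\delsolutionu$. By the uniform bounds, there exist (again up to a subsequence) two-scale limits of the form $\effelectricfield(t,x)+\vec{\overline{\electricfield}}(t,x,y)$, $\mbb{P}^{\eff}(t,x)+\overline{\mbb{P}}(t,x,y)$, $\effmagneticfield(t,x)+\vec{\overline{\magneticfield}}(t,x,y)$, with the correctors $Y$-periodic and of vanishing mean in $y$. For the $\electricfield$- and $\magneticfield$-blocks the correctors are gradients in $y$ of scalar/vector potentials, as in classical curl-homogenization; for $\mbb{P}$ no such gradient structure arises because no spatial derivative acts on $\mbb{P}^{\delta}$. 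Using smooth oscillating test functions $\vec{\varphi}(t,x)+\delta \vec{\psi}\bigl(t,x,\tfrac{x}{\delta}\bigr)$ and passing to the two-scale limit in the weak form of \cref{sys:Maxwell_Debye_ODE_heterogeneous}, one obtains a coupled system: a global effective Maxwell problem for $\effsolutionu$ on $(0,T)\times\computationaldomain$ and, for almost every $x\in\computationaldomain$, a family of cell problems on $(0,T)\times\unitcell$ driven by $\effsolutionu$ that determine the corrector $\vec{\overline{u}}$.

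The main obstacle, and the origin of the dispersive convolution, is the elimination of the corrector from the macroscopic equation. Because $\vec{R}^{\delta}$ couples $\mbb{P}^{\delta}$ and $\delelectricfield$ and because the $\mbb{P}$-cell problem is a genuinely time-dependent linear ODE in $y$ (with coefficients depending only on $y$), the corrector $\vec{\overline{u}}(t,x,y)$ does not reduce to a stationary function of $\effsolutionu(t,x)$ as in classical elliptic homogenization; instead Duhamel's formula expresses it as a time convolution against $\effelectricfield(\cdot,x)$ plus a contribution carrying the initial data $\vec{u}_{0}(x)$. Substituting this representation back into the $y$-averaged effective system and collecting terms, the instantaneous parts recombine into $\vec{M}^{\eff}$ and $\vec{\tilde{R}}^{\eff}$, the convolutional part produces precisely $\partial_{t}\!\int_{0}^{t}\vec{\tilde{G}}^{\eff}(t-s,x)\effsolutionu(s,x)\D s$, and the initial-data contribution produces the extra source $\vec{J}^{\eff}(t,x)\vec{u}_{0}(x)$. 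Uniqueness of $\effsolutionu$ in the stated space follows from a Gr\"onwall estimate applied to the resulting linear Volterra-integro-differential Maxwell system, which then upgrades the convergence to the full sequence.
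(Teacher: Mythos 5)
Your sketch takes essentially the same route as the paper's source for this statement: the paper does not prove \cref{thm:homogeneous_system} itself but quotes it from \cite{Bokil2018}, whose argument proceeds exactly as you outline --- $\delta$-uniform energy estimates (including the time-differentiated ones enabled by \cref{eq_initialcondition_source_assumption}), weak-* compactness in $\Lp{\infty}{0,T;\spaceVmac}$, two-scale limits via periodic unfolding with oscillating test functions, elimination of the time-dependent correctors by Duhamel's formula yielding the kernel $\vec{\tilde{G}}^{\eff}$ and the initial-data source $\vec{J}^{\eff}\vec{u}_{0}$, and uniqueness by a Gronwall estimate for the limiting Volterra system. One minor correction of attribution rather than substance: the memory term is generated by the zeroth-order damping $\vec{R}^{\delta}$ as a whole (conductivity alone suffices, cf.\ the introduction and the Sobolev-type cell problems \cref{def:cell_prob_wG_sys_reformulated_transformed}, which are evolution equations in $\Hper{\unitcell}$ rather than pointwise ODEs in $y$), not specifically by the absence of spatial derivatives in the $\mbb{P}^{\delta}$-block.
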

	%\begin{proof}
	%	The proof is found in \cite[Theorem 5.2]{Bokil2018} and \cite[Theorem 3]{Bossavit2005} for parameters that do not depend on the macroscopic variable, i.e, which are purely periodic. The assumption of locally periodic parameters is covered by Remark (2) to \cite[Theorem 2]{Cioranescu2002} or \cite[Remark 5.6]{Cioranescu2008}. The precise definition of the effective parameters is found in the proof of \cite[Theorem 5.2]{Bokil2018}.
	%\end{proof}
	Next, we give an equivalent formulation of the homogeneous system from \cref{thm:homogeneous_system}. The central difference is the formulation of the cell-problems as differential, rather than integral, equations and the evaluation of time derivatives. Moreover, in the spirit of the HMM, we transform the unit cell to a so-called \textit{sampling domain} $\samplingdomain{\delta}{x} = x + \delta \unitcell$, $x\in\Omega$. The measure of these sampling domains is independent of $x$ and thus denoted by $\seminorm{\samplingdomainname{\delta}}{}$. In the following result and throughout, we use the notation for the component wise gradient. For $w=\begin{pmatrix}
	w_{1} & w_{2}^{T} & w_{3} 
	\end{pmatrix}^{T} \in \R^{1 + N_{E} + 1}$ define
	$
	\nabla w \coloneqq \begin{pmatrix}
	\nabla w_{1}^{T} & \nabla w_{2}^{T} & \nabla w_{3}^{T} 
	\end{pmatrix}^{T} \in \R^{\dimMaxwell} \eqdot
	$
	\begin{corollary}\label{cor:reformulation}
		The solution $\effsolutionu$ from \cref{thm:homogeneous_system} is also solution of
		\begin{subequations}\label{sys:reformulated_effective_Maxwell_system}
			\begin{align} \label{eq:reformulated_effective_Maxwell_system_1}
			\begin{split}
			\vec{M}^{\eff}(x)\partial_t \effsolutionu(t,x) &+ \vec{R}^{\eff}(x) \effsolutionu(t,x) + \intt{\vec{G}^{\eff}(t-s,x) \effsolutionu(s,x)}{s} +\vec{A}\effsolutionu(t,x) \\
			&= \vec{g}(t,x) - \vec{J}^{\eff}(t,x)\vec{u}_{0}(x) \quad \text{in }(0,T)\times \Omega \eqcomma
			\end{split}\\
			\effsolutionu(0) &= \vec{u}_{0} \quad \text{ in }\Omega \eqcomma \\
			\vec{n}\times \effsolutionu_{1}(t,x) &= \vec{0}\quad \text{ on }(0,T)\times\partial\Omega \eqcomma
			\end{align}
		\end{subequations}
		where for $\firstindex, \secondindex = 1,\dots,\dimMaxwell$ the $\firstindex,\secondindex$-th component of the effective parameters and the extra source are given as
		\begin{subequations}\label{sys:reformulated_effective_parameters}
			\begin{align}
			(\vec{M}^{\eff}(x))_{\firstindex,\secondindex} &=
			\fint_{\samplingdomain{\delta}{x}} \vec{M}\left(x,\frac{y}{\delta}\right) \left(\el + \nabla_{y} \wlM(x,y) \right)\cdot \left(\ek + \nabla_{y} \wkM(x,y) \right) \D y \eqcomma \label{def:M_eff_symmetric_transformed} \\
			(\vec{{R}}^{\eff}(x))_{\firstindex,\secondindex} &= \fint_{\samplingdomain{\delta}{x}} \vec{R}\left(x,\frac{y}{\delta}\right)\left(\el + \grady \wlM(x,y) \right)\cdot \left(\ek + \grady \wkM(x,y) \right)\D y \eqcomma \label{def:R_eff_reformulated_transformed} \\
			\left(\vec{G}^{\eff}\left(t,x\right)\right)_{\firstindex,\secondindex} &= \fint_{\samplingdomain{\delta}{x}} \vec{R}\left(x,\frac{y}{\delta}\right) \grady \wlG\left(t,x,y\right) \cdot \left(\ek + \grady \wkM\left(x,y\right) \right)\D y \eqcomma \label{def:G_eff_reformulated_transformed}
			\end{align}
			\begin{align}
			(\vec{J}^{\eff}(t,x))_{\firstindex,\secondindex} &= \fint_{\samplingdomain{\delta}{x}} \vec{R}\left(x,\frac{y}{\delta}\right) \grady \wlN(t,x,y) \cdot \left(\ek + \grady \wkM(x,y) \right)\D y \eqdot \label{def:J_eff_reformulated_transformed}
			\end{align}
		\end{subequations}
		Let $\dimMicro\coloneqq \frac{\dimMaxwell}{3}$. The correctors $\wlM(x,\cdot)\in \Hper{\samplingdomain{\delta}{x};\R^{\dimMicro}}$ solves
		\begin{align}
		&\int_{\samplingdomain{\delta}{x}} \vec{M}\left(x,\frac{y}{\delta}\right) \left(\el + \grady \wlM(x,y)\right) \cdot \grady v(y) \D y = 0 \quad \text{for all }v\in \Hper{\samplingdomain{\delta}{x};\R^{\dimMicro}} \eqcomma \label{def:cell_prob_wM_reformulated_transformed}
		\end{align}
		The time-dependent corrector $\wlG \in \Ck{\infty}{0,T;\Hper{\samplingdomain{\delta}{x};\R^{\dimMicro}}}$ that arises due to the damping parameter $\vec{R}^{\delta}$ solves for all $t\in (0,T)$ and all $v\in\Hper{\samplingdomain{\delta}{x};\R^{\dimMicro}}$ the initial value problem
		\begin{subequations}\label{def:cell_prob_wG_sys_reformulated_transformed}
			\begin{align}
			\int_{\samplingdomain{\delta}{x}}\left[\vec{M}\left(x,\frac{y}{\delta}\right) \partial_{t} \grady \wlG(t,x,y) + \vec{R}\left(x,\frac{y}{\delta}\right) \grady \wlG(t,x,y)\right]\cdot \nabla_y v(y) \D y = 0 \eqcomma \label{def:cell_prob_wG_dt_reformulated_transformed}\\
			\int_{\samplingdomain{\delta}{x}} \left[\vec{M}\left(x,\frac{y}{\delta}\right) \grady \wlG(0,x,y) + \vec{R}\left(x,\frac{y}{\delta}\right)\left(\el + \grady \wlM(x,y) \right)\right]\cdot \nabla_y v(y) \D y = 0 \eqdot \label{def:cell_prob_wG_0_reformulated_transformed}
			\end{align}
		\end{subequations}
		The cell corrector $\wlN \in \Ck{\infty}{0,T;\Hper{\samplingdomain{\delta}{x};\R^{\dimMicro}}}$ solves for all $t\in(0,T)$ and all $v\in\Hper{\samplingdomain{\delta}{x};\R^{\dimMicro}}$ the initial value problem
		\begin{subequations}\label{def:cell_prob_wN_sys_reformulated_transformed}
			\begin{align}
			\int_{\samplingdomain{\delta}{x}} \left[\vec{M}\left(x,\frac{y}{\delta}\right) \partial_{t} \grady \wlN(t,x,y) + \vec{R}\left(x,\frac{y}{\delta}\right) \grady \wlN(t,x,y)\right]\cdot \nabla_y v(y) \D y = 0 \eqcomma \label{def:cell_prob_wN_dt_reformulated_transformed}\\
			\int_{\samplingdomain{\delta}{x}} \vec{M}\left(x,\frac{y}{\delta}\right)\left(\el - \grady \wlN(0,x,y) \right)\cdot \nabla_y v(y) \D y = 0 \eqdot \label{def:cell_prob_wN_0_reformulated_transformed}
			\end{align}
		\end{subequations}
	\end{corollary}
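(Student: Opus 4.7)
The plan is to reduce \cref{cor:reformulation} to \cref{thm:homogeneous_system} via three independent, essentially elementary manipulations: a Leibniz-rule rewriting of the memory kernel in the macroscopic equation, a Duhamel-based reformulation of the time-dependent cell correctors as initial value problems, and a change of variables from the unit cell $\unitcell$ to the sampling domain $\samplingdomain{\delta}{x}$.

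First I would differentiate the convolution term in the effective equation of \cref{thm:homogeneous_system} by Leibniz's rule:
\begin{align*}
\partial_t \int_0^t \vec{\tilde{G}}^{\eff}(t-s, x) \effsolutionu(s, x) \D s = \vec{\tilde{G}}^{\eff}(0, x) \effsolutionu(t, x) + \intt{\partial_t \vec{\tilde{G}}^{\eff}(t-s, x) \effsolutionu(s, x)}{s} \eqdot
\end{align*}
Setting $\vec{R}^{\eff}(x) := \vec{\tilde{R}}^{\eff}(x) + \vec{\tilde{G}}^{\eff}(0, x)$ and $\vec{G}^{\eff}(t, x) := \partial_t \vec{\tilde{G}}^{\eff}(t, x)$ then immediately produces \cref{eq:reformulated_effective_Maxwell_system_1}; the initial and boundary conditions transfer verbatim.

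Next I would rewrite the time-dependent cell problems as IVPs. In Bokil's formulation the kernels arise from convolution-type cell problems whose solutions, by Duhamel's principle, take the form $\nabla \wlG(t) = e^{-t\vec{M}^{-1}\vec{R}} \nabla \wlG(0)$ in a weak operator sense, with the initial data encoding the source $\vec{R}(\el + \nabla \wlM)$. This family is equivalently characterized as the solution of the homogeneous evolution \eqref{def:cell_prob_wG_dt_reformulated_transformed} starting from the weakly-defined $\nabla \wlG(0)$ given by \eqref{def:cell_prob_wG_0_reformulated_transformed}; an analogous argument applies to $\wlN$. Substituting these equivalent IVPs into Bokil's original formulas for $\vec{\tilde{R}}^{\eff} + \vec{\tilde{G}}^{\eff}(0,\cdot)$, $\partial_t \vec{\tilde{G}}^{\eff}$, and $\vec{J}^{\eff}$, and using the evolution equation to eliminate sources, yields the formulas \eqref{def:R_eff_reformulated_transformed}–\eqref{def:J_eff_reformulated_transformed}. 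The symmetric representation \eqref{def:M_eff_symmetric_transformed} of $\vec{M}^{\eff}$ follows by testing the elliptic corrector equation \eqref{def:cell_prob_wM_reformulated_transformed} with $v = \wkM$ and exploiting the symmetry of $\vec{M}$.

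Finally I would carry out the change of variables $\tilde{y} = x + \delta y$ from $\unitcell$ onto $\samplingdomain{\delta}{x}$. Defining rescaled correctors $\tilde{w}(x, \tilde{y}) := \delta\, w(x,(\tilde{y}-x)/\delta)$ preserves gradients pointwise, while the Jacobian $\delta^{3}$ together with $\lvert\samplingdomainname{\delta}\rvert = \delta^{3}$ turns $\int_{\unitcell}$ into $\fint_{\samplingdomain{\delta}{x}}$; the $\unitcell$-periodicity of $\vec{M}(x,\cdot)$ and $\vec{R}(x,\cdot)$ in the second argument then makes the cell problem on $\samplingdomain{\delta}{x}$ invariant under the shift $(\tilde{y}-x)/\delta \mapsto \tilde{y}/\delta$ against $\samplingdomain{\delta}{x}$-periodic test functions. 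I expect the main obstacle to be the bookkeeping in the second step: checking carefully that the sum $\vec{\tilde{R}}^{\eff} + \vec{\tilde{G}}^{\eff}(0,\cdot)$ collapses to the clean expression \eqref{def:R_eff_reformulated_transformed} requires matching the weak initial condition \eqref{def:cell_prob_wG_0_reformulated_transformed} of the IVP with the $t=0$ trace of Bokil's convolution kernel, which amounts to verifying a single identity in $\Hper{\samplingdomain{\delta}{x};\R^{\dimMicro}}$ per coordinate pair $(\firstindex,\secondindex)$.
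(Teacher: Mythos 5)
Your proposal follows exactly the route the paper takes: differentiating the convolution via the Leibniz rule to absorb $\vec{\tilde{G}}^{\eff}(0,\cdot)$ into $\vec{R}^{\eff}$, recasting the integral-form cell problems as initial value problems (with the regularity coming from the bounded-generator semigroup representation, which the paper handles in \cref{lem:wellposedness_wG}), and transforming from $\unitcell$ to $\samplingdomain{\delta}{x}$ with the gradient-preserving rescaling and periodicity-based shift invariance — the paper merely compresses these same three steps into a sketch and defers the bookkeeping to the author's thesis. The proposal is correct and essentially identical in approach.
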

	\begin{proof}
		We start with \cref{thm:homogeneous_system}. The reformulation is based on the evaluation of the time derivative of the convolution and the equivalent formulation of integral- as differential equations. Eventually, a change of variables yields the formulation using the sampling domains. For the details, we refer to \cite[Section 4.4.4 and Appendix A.2]{Freese2021}. In \cref{lem:wellposedness_wG} below, we prove the regularity of the cell correctors.
	\end{proof}

	\section{\Wellposedness of microscopic and macroscopic problems}\label{sec:Wellposedness of microscopic and macroscopic problems}
	In this section we drop the dependence on the $x$ variable, since it is only a parameter in the cell problems. Thus, let $x\in\Omega$ be fixed. Furthermore, we abbreviate the solution space of the cell problems by $\spaceVmic\coloneqq \Hper{\samplingdomain{\delta}{x};\R^{\dimMicro}}$, equipped with the inner product
	\begin{align*}
	\bilinearform{}{\phi}{\psi}_{\spaceVmic} = \bilinearform{}{\grady \phi}{\grady \psi}_{\Lp{2}{\samplingdomain{\delta}{x};\R^{\dimMicro}}} \quad \text{for all }\phi,\psi\in\spaceVmic \eqdot
	\end{align*}
	This is a inner product due to the Poincar\'{e} inequality. The induced norm is equivalent to the $\mrm{L}^2$-norm of the gradient, i.e, $\norm{\cdot}{\spaceVmic} = \norm{\grady \cdot}{\Lp{2}{\samplingdomain{\delta}{x}}}$. Additionally, we introduce the weighted inner product $\namemicrobilinearform{m}^{\delta}:\spaceVmic \times \spaceVmic \to \R$ given as
	\begin{align}
	\microkappabilinearform{m}{\phi}{\psi} \coloneqq \int_{\samplingdomain{\delta}{x}}\vec{M}\left(\frac{y}{\delta}\right) \grady \phi(y) \cdot \grady \psi(y) \D y \quad \text{for all }\phi,\psi\in \spaceVmic\eqdot \label{def:bilinearform_m}
	\end{align}
	It is bounded and coercive, since the parameter $\vec{M}^{\delta}$ is assumed to be bounded and positive definite, cf. \cref{def:propertiesM}. Thus, the bilinear form is an inner product on $\spaceVmic$, and we denote its induced norm by $\norm{\cdot}{\namemicrobilinearform{m}^{\delta}}$. Hence, we have an equivalent inner product, which satisfies
	\begin{align}
	\sqrt{\coercivityconstant} \norm{\phi}{\spaceVmic} \leq \norm{\phi}{\namemicrobilinearform{m}^{\delta}} \leq \sqrt{\boundednessconstant} \norm{\phi}{\spaceVmic}\quad \text{for all } \phi\in\spaceVmic \eqdot \label{eq:properties_microbilinearform_m}
	\end{align}
	\begin{lemma}\label{lem:wellposedness_wM_wG0_wN0}
		For every $\secondindex = 1,\dots,\dimMaxwell$ the cell problems \cref{def:cell_prob_wM_reformulated_transformed}, \cref{def:cell_prob_wG_0_reformulated_transformed} and \cref{def:cell_prob_wN_0_reformulated_transformed} are \wellposed\!. Moreover, the solutions are bounded by
		\begin{align*}
		\norm{\wlM}{\namemicrobilinearform{m}^{\delta}} \leq \sqrt{\boundednessconstant 
			\seminorm{\samplingdomainname{\delta}}{}} \eqcomma \quad
		\norm{\wlG(0)}{\namemicrobilinearform{m}^{\delta}} \leq 2 \frac{\genericconstant_{\vec{R}}}{\coercivityconstant} \sqrt{\boundednessconstant 
			\seminorm{\samplingdomainname{\delta}}{}} \eqcomma \quad
		\norm{\wlN(0)}{\namemicrobilinearform{m}^{\delta}} \leq \sqrt{\boundednessconstant 
			\seminorm{\samplingdomainname{\delta}}{}} \eqdot
		\end{align*}
	\end{lemma}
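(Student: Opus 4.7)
The plan is to handle all three problems in parallel via the Lax--Milgram lemma applied to the bilinear form $\namemicrobilinearform{m}^{\delta}$ on $\spaceVmic$, since each problem \cref{def:cell_prob_wM_reformulated_transformed}, \cref{def:cell_prob_wG_0_reformulated_transformed}, \cref{def:cell_prob_wN_0_reformulated_transformed} can be written as $\microkappabilinearform{m}{\phi}{v} = \linearform{L}{v}$ for a suitable bounded linear functional $L$ on $\spaceVmic$. Coercivity and boundedness of $\namemicrobilinearform{m}^{\delta}$ come for free from \cref{def:propertiesM} and \cref{eq:properties_microbilinearform_m}. The right-hand sides are of the form $v \mapsto \int M(\cdot/\delta)\, \el \cdot \grady v$ (for $\wlM$ and $\wlN(0)$, up to a sign) and $v \mapsto -\int R(\cdot/\delta)(\el + \grady \wlM) \cdot \grady v$ (for $\wlG(0)$), each bounded on $\spaceVmic$ by the boundedness of $\vec{M}$, $\vec{R}$ and the fact that $\samplingdomain{\delta}{x}$ has finite measure. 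This gives \wellposedness of all three problems; note that solving $\wlM$ first allows the $\wlG(0)$-equation to be read as a standard elliptic problem.

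The quantitative bounds follow by testing each equation with the solution itself and using Cauchy--Schwarz for the inner product $\namemicrobilinearform{m}^{\delta}$. For $\wlM$, testing \cref{def:cell_prob_wM_reformulated_transformed} with $v = \wlM$ yields $\norm{\wlM}{\namemicrobilinearform{m}^{\delta}}^{2} = -\microkappabilinearform{m}{\el}{\wlM}$ (treating $\el$ as a constant vector field), and Cauchy--Schwarz for $\namemicrobilinearform{m}^{\delta}$ gives $\norm{\wlM}{\namemicrobilinearform{m}^{\delta}} \leq \sqrt{\microkappabilinearform{m}{\el}{\el}} \leq \sqrt{\boundednessconstant \seminorm{\samplingdomainname{\delta}}{}}$ because $|\el|=1$. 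The analogous argument for $\wlN(0)$, testing \cref{def:cell_prob_wN_0_reformulated_transformed} with $v = \wlN(0)$, is identical and produces the same bound.

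The bound on $\wlG(0)$ is the only delicate step because its right-hand side contains both $\el$ and $\grady \wlM$ and is weighted by the non-symmetric, only semi-definite $\vec{R}$, so Cauchy--Schwarz in $\namemicrobilinearform{m}^{\delta}$ is not directly available. I would instead use the plain $\Lp{2}$ Cauchy--Schwarz together with $\norm{\vec{R}}{\Lp{\infty}} \leq \genericconstant_{\vec{R}}$, then switch between the norms $\norm{\cdot}{\spaceVmic}$ and $\norm{\cdot}{\namemicrobilinearform{m}^{\delta}}$ via \cref{eq:properties_microbilinearform_m}. Testing \cref{def:cell_prob_wG_0_reformulated_transformed} with $v = \wlG(0)$ yields
\begin{align*}
\norm{\wlG(0)}{\namemicrobilinearform{m}^{\delta}}^{2}
\leq \genericconstant_{\vec{R}}\bigl(\norm{\el}{\Lp{2}{\samplingdomainname{\delta}}} + \norm{\grady \wlM}{\Lp{2}{\samplingdomainname{\delta}}}\bigr)\norm{\grady \wlG(0)}{\Lp{2}{\samplingdomainname{\delta}}}\eqdot
\end{align*}
Inserting $\norm{\el}{\Lp{2}{\samplingdomainname{\delta}}} = \sqrt{\seminorm{\samplingdomainname{\delta}}{}}$, the already proven bound $\norm{\grady \wlM}{\Lp{2}{\samplingdomainname{\delta}}} \leq \sqrt{\boundednessconstant \seminorm{\samplingdomainname{\delta}}{}/\coercivityconstant}$, and $\norm{\grady \wlG(0)}{\Lp{2}{\samplingdomainname{\delta}}} \leq \coercivityconstant^{-1/2}\norm{\wlG(0)}{\namemicrobilinearform{m}^{\delta}}$, dividing by $\norm{\wlG(0)}{\namemicrobilinearform{m}^{\delta}}$ and using the trivial inequality $\sqrt{\coercivityconstant} \leq \sqrt{\boundednessconstant}$ (which lets me bound $1/\sqrt{\coercivityconstant}$ by $\sqrt{\boundednessconstant}/\coercivityconstant$) turns the two terms into $\coercivityconstant^{-1}\genericconstant_{\vec{R}}\sqrt{\boundednessconstant \seminorm{\samplingdomainname{\delta}}{}}$ each, giving the claimed factor $2\genericconstant_{\vec{R}}/\coercivityconstant$.

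The main obstacle is thus purely bookkeeping: carefully combining the two norm equivalences so the constants collapse to the stated $2\genericconstant_{\vec{R}}\coercivityconstant^{-1}\sqrt{\boundednessconstant\seminorm{\samplingdomainname{\delta}}{}}$ rather than a messier sum of two different constants. The regularity statement $\wlG, \wlN \in \Ck{\infty}{0,T;\spaceVmic}$ claimed in \cref{cor:reformulation} is postponed to \cref{lem:wellposedness_wG}, so nothing more is needed here.
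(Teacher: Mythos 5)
Your proposal is correct and follows essentially the same route as the paper: Lax--Milgram via the coercive form $\namemicrobilinearform{m}^{\delta}$ for well-posedness, then testing each equation with its own solution, using weighted Cauchy--Schwarz for the $\vec{M}$-terms and the plain $\Lp{2}{\samplingdomain{\delta}{x}}$ estimate with $\genericconstant_{\vec{R}}$ plus the norm equivalence \cref{eq:properties_microbilinearform_m} and $\coercivityconstant\leq\boundednessconstant$ for the $\wlG(0)$-bound. Your constant bookkeeping reproduces the paper's computation exactly, so nothing further is needed.
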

	\begin{proof}
		The three problems \cref{def:cell_prob_wM_reformulated_transformed,def:cell_prob_wG_0_reformulated_transformed,def:cell_prob_wN_0_reformulated_transformed} can be written as: Find $w\in\spaceVmic$ such that 
		\begin{align*}
		\microkappabilinearform{m}{w}{v} = \linearform{b}{v} \quad \text{for all }v\in\spaceVmic \eqdot
		\end{align*}
		with a suitable definition of the linear, bounded functional $b$. Thus, the \wellposedness follows from the Lax--Milgram lemma. For the bound in the $\namemicrobilinearform{m}^{\delta}$-norm on $\wlM$ we choose in \cref{def:cell_prob_wM_reformulated_transformed} $v=\wlM$. Due to \cref{eq:properties_microbilinearform_m} and the Cauchy--Schwarz inequality we find
		\begin{align*}
		\norm{\wlM}{\namemicrobilinearform{m}^{\delta}}^{2} &= \microkappabilinearform{m}{\wlM}{\wlM} = -\int_{\samplingdomain{\delta}{x}} \vec{M}\left(\frac{y}{\delta}\right) \el \cdot \grady \wlM(y) \D y \leq \sqrt{\boundednessconstant \seminorm{\samplingdomainname{\delta}}{}} \norm{\wlM}{\namemicrobilinearform{m}^{\delta}} \eqdot
		\end{align*}
		Similarly, we choose $v=\wlG(0)$ in \cref{def:cell_prob_wG_0_reformulated_transformed} which yields
		\begin{align*}
		&\norm{\wlG(0)}{\namemicrobilinearform{m}^{\delta}}^{2} = \microkappabilinearform{m}{\wlG(0)}{\wlG(0)} = -\int_{\samplingdomain{\delta}{x}} \vec{R}\left(\frac{y}{\delta}\right) \left(\el + \grady \wlM(y) \right) \cdot \grady \wlG(0,y) \D y \\
		&\quad\leq \genericconstant_{\vec{R}} \left(\norm{\el}{\Lp{2}{\samplingdomain{\delta}{x}}} + \norm{ \wlM}{\spaceVmic}\right) \norm{\wlG(0)}{\spaceVmic} \leq 2 \frac{\genericconstant_{\vec{R}}}{\coercivityconstant} \sqrt{\boundednessconstant \seminorm{\samplingdomainname{\delta}}{}} \norm{\wlG(0)}{\namemicrobilinearform{m}^{\delta}} \eqdot
		\end{align*}
		The bound for $\wlN(0)$ follows like that for $\wlM$ by choosing $v=\wlN(0)$ in \cref{def:cell_prob_wN_0_reformulated_transformed}.
	\end{proof}
	The two time-dependent cell problems \cref{def:cell_prob_wG_dt_reformulated_transformed} and \cref{def:cell_prob_wN_dt_reformulated_transformed} are so-called Sobolev equations with respective initial value, which is given in each case in \cref{def:cell_prob_wG_0_reformulated_transformed,def:cell_prob_wN_0_reformulated_transformed}. Results for those problems are found in \cite{Bekkouche2019,Liu2002}. For the following error analysis, we provide two results. Using ideas from \cite{Hipp2017}, we introduce another bilinear form related to the parameter $\vec{R}^{\delta}$.
	\begin{align}
	&\microkappabilinearform{r}{\phi}{\psi} \coloneqq \int_{\samplingdomain{\delta}{x}} \vec{R}\left(\frac{y}{\delta}\right) \grady \phi(y) \cdot \grady \psi(y) \D y  &&\text{for all }\phi,\psi\in \spaceVmic \eqdot \label{def:bilinearform_r}
	\end{align}
	In contrast to the bilinear form $\namemicrobilinearform{m}^{\delta}$ the form $\namemicrobilinearform{r}^{\delta}$ is only bounded but not coercive, since the parameter is only positive semi-definite. We consider a general (variational) Sobolev equation with initial value $w_{0}\in \spaceVmic$
	\begin{subequations}\label{sys:Sobolev_equation}
		\begin{align}
		\microkappabilinearform{m}{\partial_{t} w(t)}{v} + \microkappabilinearform{r}{w(t)}{v} &= 0 \quad \text{for all }v\in\spaceVmic \eqcomma t\in[0,T] \eqdot \label{eq:Sobolev_equation_variational} \\ 
		w(0) &= w_{0} \quad\text{in }\samplingdomain{\delta}{x} \eqdot
		\end{align}
	\end{subequations}
	Due to Riesz-representation theorem we find an operator $\mcal{S}:\spaceVmic\to\spaceVmic$ such that
	\begin{align}
	\microkappabilinearform{r}{\phi}{\psi} = \microkappabilinearform{m}{\mcal{S} \phi}{\psi} \quad \text{for all }\phi,\psi\in\spaceVmic \eqdot \label{def:op_A}
	\end{align}
	We thus get the Sobolev equation as
	\begin{align}
	\microkappabilinearform{m}{\partial_t w(t)}{v} + \microkappabilinearform{m}{\mcal{S}w(t)}{v} = 0 \quad \text{for all }v\in\spaceVmic\eqdot \label{eq:abstract_Cauchy_Sobolev}
	\end{align}
	With respect to the inner product $\namemicrobilinearform{m}^{\delta}$, the operator $\mcal{S}$ inherits by definition its properties from the bilinear form $\namemicrobilinearform{r}^{\delta}$. From \cref{def:propertyR} and \cref{eq:properties_microbilinearform_m} we find that $\mcal{S}$ is bounded
	\begin{align*}
	\seminorm{\microkappabilinearform{m}{\mcal{S}\phi}{\psi}}{} &= \seminorm{\microkappabilinearform{r}{\phi}{\psi}}{} \leq \genericconstant_{\vec{R}} \norm{\phi}{\spaceVmic} \norm{\psi}{\spaceVmic} \leq \frac{\genericconstant_{\vec{R}}}{\coercivityconstant} \norm{\phi}{\namemicrobilinearform{m}^{\delta}} \norm{\psi}{\namemicrobilinearform{m}^{\delta}} \eqdot
	\end{align*}
	Moreover, since $\vec{R}^{\delta}$ is non-negative, the operator $-\mcal{S}$ is dissipative, i.e.,
	\begin{align*}
	\microkappabilinearform{m}{-\mcal{S}\phi}{\phi} = -\microkappabilinearform{r}{\phi}{\phi} \leq 0 \eqdot
	\end{align*} 
	In this setting the operator $\mcal{S}$ also satisfies the range condition with respect to $\spaceVmic$, i.e., $\operatorname{range}(\identityoperator+\mcal{S})=\spaceVmic$. This follows by an application of the Lax--Milgram lemma.
	\begin{lemma}\label{lem:wellposedness_wG}
		For every $\secondindex = 1,\dots,\dimMaxwell$ the cell problems \cref{def:cell_prob_wG_sys_reformulated_transformed} and \cref{def:cell_prob_wN_sys_reformulated_transformed} are well-posed. Moreover, the solutions have the regularity $\wlG$, $\wlN\in\Ck{\infty}{0,T;\spaceVmic}$ and satisfy the stability bounds
		\begin{align*}
		\norm{\wlG(t)}{\namemicrobilinearform{m}^{\delta}} \leq \norm{\wlG(0)}{\namemicrobilinearform{m}^{\delta}} \eqcomma \quad
		\norm{\wlN(t)}{\namemicrobilinearform{m}^{\delta}} \leq \norm{\wlN(0)}{\namemicrobilinearform{m}^{\delta}} \eqdot
		\end{align*}
	\end{lemma}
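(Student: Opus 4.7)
The plan is to recast both cell problems (\ref{def:cell_prob_wG_sys_reformulated_transformed}) and (\ref{def:cell_prob_wN_sys_reformulated_transformed}) as an abstract Cauchy problem in the Hilbert space $(\spaceVmic, \namemicrobilinearform{m}^{\delta})$ and apply operator-semigroup theory to the bounded operator $\mcal{S}$ defined in (\ref{def:op_A}). Using $\mcal{S}$, both equations take the identical form
\begin{equation*}
\partial_t w(t) + \mcal{S} w(t) = 0 \quad \text{in }\spaceVmic,
\end{equation*}
the only difference being the initial value, which is either $\wlG(0)$ or $\wlN(0)$; the well-posedness and $\namemicrobilinearform{m}^{\delta}$-bounds of those initial values are already guaranteed by \cref{lem:wellposedness_wM_wG0_wN0}.

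Next I would invoke the three properties collected in the paragraph preceding the lemma: $\mcal{S}$ is bounded on $\spaceVmic$, $-\mcal{S}$ is dissipative with respect to $\namemicrobilinearform{m}^{\delta}$, and $\mrm{range}(\identityoperator + \mcal{S}) = \spaceVmic$. By the Lumer--Phillips theorem, $-\mcal{S}$ therefore generates a $\mrm{C}_0$-semigroup of contractions $(T(t))_{t\geq 0}$ on $(\spaceVmic, \namemicrobilinearform{m}^{\delta})$, so the unique (mild, hence classical) solution is $w(t) = T(t) w(0)$. Since $\mcal{S}$ is a bounded linear operator, this semigroup coincides with the norm-convergent exponential series $T(t) = \sum_{k\geq 0} \frac{(-t\mcal{S})^k}{k!}$, which is $\mrm{C}^\infty$ in $t$ with derivatives $(-\mcal{S})^k T(t) w(0) \in \spaceVmic$. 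This yields the claimed regularity $\wlG, \wlN \in \Ck{\infty}{0,T;\spaceVmic}$.

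For the two stability bounds I would use the energy identity, which is the most transparent route and does not even require the semigroup machinery. Testing the variational equations with $v = w(t)$ and using symmetry of $\vec{M}$, hence of $\namemicrobilinearform{m}^{\delta}$, yields
\begin{equation*}
\tfrac{1}{2}\tfrac{\mrm{d}}{\mrm{d}t}\norm{w(t)}{\namemicrobilinearform{m}^{\delta}}^{2} = \microkappabilinearform{m}{\partial_t w(t)}{w(t)} = -\microkappabilinearform{r}{w(t)}{w(t)} \leq 0,
\end{equation*}
since $\vec{R}$ is positive semi-definite. Integration from $0$ to $t$ gives $\norm{\wlG(t)}{\namemicrobilinearform{m}^{\delta}} \leq \norm{\wlG(0)}{\namemicrobilinearform{m}^{\delta}}$, and analogously for $\wlN$. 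The stated inequality is the semigroup contraction property expressed at the level of energies.

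There is no substantial obstacle: the algebraic ingredients (dissipativity of $-\mcal{S}$, the range condition, boundedness of $\mcal{S}$, symmetry of $\namemicrobilinearform{m}^{\delta}$, semi-definiteness of $\namemicrobilinearform{r}^{\delta}$) have all been assembled in the pages preceding the statement. The only point requiring a brief remark is that testing with $v = w(t)$ is legitimate because $\partial_t w(t) = -\mcal{S} w(t) \in \spaceVmic$, so both sides of the variational equation are genuine $\namemicrobilinearform{m}^{\delta}$-inner products rather than duality pairings.
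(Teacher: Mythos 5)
Your proposal is correct and follows essentially the same route as the paper: the same reformulation of both cell problems as the abstract Cauchy problem $\partial_t w(t) + \mcal{S}w(t) = 0$ in $\left(\spaceVmic, \namemicrobilinearform{m}^{\delta}\right)$ with initial values handled by \cref{lem:wellposedness_wM_wG0_wN0}, the Lumer--Phillips theorem for the contraction semigroup, and the exponential series of the bounded operator $\mcal{S}$ for the $\Ck{\infty}{0,T;\spaceVmic}$ regularity. The only (cosmetic) difference is the stability bound, which you obtain by testing with $v = w(t)$ and integrating the energy identity --- legitimately, as you note, since $\partial_t w(t) = -\mcal{S}w(t) \in \spaceVmic$ --- whereas the paper reads the same bound off the contraction property $\norm{\euler^{-\mcal{S}t}}{\namemicrobilinearform{m}^{\delta}\gets \namemicrobilinearform{m}^{\delta}} \leq 1$ applied to $w(t) = \euler^{-\mcal{S}t}w(0)$; these are equivalent, since the dissipativity of $-\mcal{S}$ underlying Lumer--Phillips is exactly your energy inequality.
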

	\begin{proof}
		Let $\secondindex = 1,\dots,\dimMicro$ be fixed. Note, that \cref{def:cell_prob_wG_dt_reformulated_transformed} is equivalent to the abstract Cauchy problem \cref{eq:abstract_Cauchy_Sobolev} with $w=\wlG$, where the initial value satisfies \cref{def:cell_prob_wG_0_reformulated_transformed}. 
		We just showed that $-\mcal{S}$ is dissipative and satisfies the range condition. By the Lumer--Phillips theorem \cite[Chapter II, Corollary 3.20]{Engel1999} it generates a contraction semigroup $\left(\euler^{-\mcal{S} t}\right)_{t\geq 0}$, i.e., in the $\norm{\cdot}{\namemicrobilinearform{m}^{\delta}}$-norm we get
		\begin{align}\label{eq:contractionSobolev}
		\norm{\euler^{-\mcal{S}t}}{\namemicrobilinearform{m}^{\delta}\gets \namemicrobilinearform{m}^{\delta}} \leq 1 \eqdot
		\end{align}
		Hence, due to \cite[Chapter III, 6.2 Proposition]{Engel1999} the abstract Cauchy problem is \wellposed and the solution is given by
		\begin{align}
		\wlG(t) = \euler^{-\mcal{S}t} \wlG(0) \eqdot \label{eq:Sobolev_equation_solution_semigroup}
		\end{align}
		The initial value $\wlG(0)$ is given as solution of \cref{def:cell_prob_wG_0_reformulated_transformed}, which is \wellposed thanks to \cref{lem:wellposedness_wM_wG0_wN0}. For the regularity of the solution we use the representation of the solution given in \cref{eq:Sobolev_equation_solution_semigroup} and the fact that the operator is bounded. The series representation of the exponential for bounded operators may be differentiated infinitely often, which yields the regularity in time. In turn, the stability bound directly follows from the representation of the solution in \cref{eq:Sobolev_equation_solution_semigroup} and the contraction property \cref{eq:contractionSobolev}. \\
		Exactly the same argumentation is valid for the cell problem \cref{def:cell_prob_wN_sys_reformulated_transformed}.
	\end{proof}
	For the error analysis below, the following $\mrm{H}^{2}$-estimate is essential.
	\begin{theorem}\label{thm:H2_Sobolev}
		Let $\vec{M}\in\Wkp{1}{\infty}{\samplingdomain{\delta}{x}}$ be symmetric and uniformly positive definite, $\vec{R}\in\Wkp{1}{\infty}{\samplingdomain{\delta}{x}}$ be positive semi-definite and assume that the initial value is $\mrm{H}^{2}$-regular. For a solution $w(t,\cdot)\in\Hk{2}{\samplingdomain{\delta}{x}}$, $t\in[0,T]$ of \cref{sys:Sobolev_equation} we get the estimate
		\begin{align*}
		\norm{w(t)}{\Hk{2}{\samplingdomain{\delta}{x}}} \leq \genericconstant 	\norm{w_{0}}{\Hk{2}{\samplingdomain{\delta}{x}}}\eqdot
		\end{align*}
	\end{theorem}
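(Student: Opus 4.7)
The plan is to exploit a bootstrap structure of the Sobolev equation: given $w(t)$, the variational identity \eqref{eq:Sobolev_equation_variational} determines $\partial_t w(t)$ as the unique solution of an elliptic problem, so $\Hk{2}{\samplingdomain{\delta}{x}}$-regularity of $w(t)$ transfers to $\partial_t w(t)$. The identity $w(t)=w_{0}+\int_{0}^{t}\partial_t w(s)\D s$ combined with Gr\"onwall's lemma then closes the estimate. Note that $\mrm{L}^{2}$ and $\mrm{H}^{1}$ control of $w(t)$ by the initial data is already provided by \cref{lem:wellposedness_wG} via the contraction bound in $\norm{\cdot}{\namemicrobilinearform{m}^{\delta}}$; the new content here is the control of the second derivatives.

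For fixed $t\in[0,T]$ the relation \eqref{eq:Sobolev_equation_variational} reads $\microkappabilinearform{m}{\partial_t w(t)}{v}=-\microkappabilinearform{r}{w(t)}{v}$ for all $v\in\spaceVmic$, whose strong form is $-\nabla_y\cdot\bigl(\vec{M}(y/\delta)\nabla_y\partial_t w(t)\bigr)=\nabla_y\cdot\bigl(\vec{R}(y/\delta)\nabla_y w(t)\bigr)$ together with periodic, zero-mean boundary conditions. Since $\vec{R}\in\Wkp{1}{\infty}{\samplingdomain{\delta}{x}}$ and $w(t)\in\Hk{2}{\samplingdomain{\delta}{x}}$, the right-hand side lies in $\Lp{2}{\samplingdomain{\delta}{x}}$ with $\mrm{L}^{2}$-norm bounded by $\genericconstant\norm{w(t)}{\Hk{2}{\samplingdomain{\delta}{x}}}$. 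The $\mrm{W}^{1,\infty}$-regularity and uniform positive definiteness of $\vec{M}$ then allow one to invoke classical $\mrm{H}^{2}$-regularity for divergence-form elliptic operators on the periodic cell (most cleanly derived on the reference cell $\unitcell$ via the rescaling $z=(y-x)/\delta$, where the coefficients become $\delta$-independent $\Wkp{1}{\infty}{\unitcell}$ functions), yielding
\begin{align*}
\norm{\partial_t w(t)}{\Hk{2}{\samplingdomain{\delta}{x}}}\leq\genericconstant\norm{w(t)}{\Hk{2}{\samplingdomain{\delta}{x}}}.
\end{align*}
Integrating the fundamental-theorem identity above in the $\mrm{H}^{2}$-norm and applying Gr\"onwall's lemma to $f(t)\coloneqq\norm{w(t)}{\Hk{2}{\samplingdomain{\delta}{x}}}$ yields the claimed bound with a constant of the form $\genericconstant\,\euler^{\genericconstant T}$.

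The main obstacle is the elliptic $\mrm{H}^{2}$-regularity statement for $-\nabla\cdot(\vec{M}\nabla\cdot)$ on the unit cell with periodic boundary conditions and merely $\mrm{W}^{1,\infty}$ coefficients. This can be proved by the Nirenberg difference-quotient method, or equivalently by differentiating the weak equation in each coordinate direction and exploiting that spatial derivatives preserve both periodicity and the zero-mean property so that the test space is unchanged. Once this regularity is in hand, the remaining steps reduce to a standard energy and Gr\"onwall argument.
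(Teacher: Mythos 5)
Your proof is correct in substance, but it takes a genuinely different route from the paper, and the difference is quantitative. You bootstrap: periodic elliptic $\mrm{H}^{2}$-regularity for $\Delta_{\vec{M}}=\div(\vec{M}\nabla\cdot)$ with $\Wkp{1}{\infty}{\samplingdomain{\delta}{x}}$ coefficients shows that $\mcal{S}=\Delta_{\vec{M}}^{-1}\Delta_{\vec{R}}$ is bounded on the zero-mean periodic part of $\Hk{2}{\samplingdomain{\delta}{x}}$, and Gr\"onwall then gives $\norm{w(t)}{\Hk{2}{\samplingdomain{\delta}{x}}}\leq \euler^{\genericconstant t}\norm{w_{0}}{\Hk{2}{\samplingdomain{\delta}{x}}}$. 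The paper deliberately avoids Gr\"onwall: it conjugates the generator by $\Delta_{\vec{M}}$, shows that $-\Delta_{\vec{R}}\Delta_{\vec{M}}^{-1}$ is dissipative on $\Lp{2}{\samplingdomain{\delta}{x}}$ equipped with the weighted inner product $\bilinearform{}{\Phi}{\Psi}_{\Delta_{\vec{M}}^{-1}}=\bilinearform{}{\Phi}{-\Delta_{\vec{M}}^{-1}\Psi}$, and uses the closedness of $\Delta_{\vec{M}}$ (comparing the two exponential series) to obtain the intertwining $\Delta_{\vec{M}}w(t)=\euler^{-\Delta_{\vec{R}}\Delta_{\vec{M}}^{-1}t}\Delta_{\vec{M}}w_{0}$; contractivity of that semigroup then yields a constant \emph{uniform in $t$}. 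This uniformity is not cosmetic: \cref{thm:H2_Sobolev} is invoked in \cref{lem:micro_error_G_first_order} exactly to bound $\seminorm{\wG(s)}{\Hk{2}{\samplingdomainname{\delta}}}$ uniformly for $s\in[0,t]$, and it is this that turns the term $\int_{0}^{t}\seminorm{\wG(s)}{\Hk{2}{\samplingdomainname{\delta}}}\D s$ into the linear factor $(1+t)$ there and in \cref{thm:semidiscrete_error_estimate}; with your $\euler^{\genericconstant t}$ those micro-error bounds would degrade to exponential growth in time, which the paper explicitly set out to avoid (see the remark preceding \cref{thm:error_Sobolev_equation}). In exchange, your route is more elementary and self-contained (difference quotients plus a standard ODE argument, no semigroup intertwining), and it makes explicit the boundedness of $\mcal{S}$ on $\mrm{H}^{2}$, a fact the paper also needs implicitly for its series manipulations.

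Two points to tighten. First, Gr\"onwall requires $t\mapsto\norm{w(t)}{\Hk{2}{\samplingdomain{\delta}{x}}}$ to be locally integrable, which does not follow from the pointwise assumption $w(t)\in\Hk{2}{\samplingdomain{\delta}{x}}$ alone; this is repaired cleanly by observing that, once $\mcal{S}$ is bounded on the $\mrm{H}^{2}$ subspace, uniqueness gives $w(t)=\euler^{-\mcal{S}t}w_{0}$ in $\mrm{H}^{2}$ and hence $\norm{w(t)}{\Hk{2}{\samplingdomain{\delta}{x}}}\leq\euler^{\norm{\mcal{S}}{}t}\norm{w_{0}}{\Hk{2}{\samplingdomain{\delta}{x}}}$ directly. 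Second, your rescaling $z=(y-x)/\delta$ does make the coefficients $\delta$-independent, but the $\mrm{H}^{2}$-norms rescale anisotropically (a factor $\delta$ per derivative order), so the elliptic estimate transported back to $\samplingdomain{\delta}{x}$ is naturally an estimate in $\delta$-weighted norms; if one wants the constant uniform in $\delta$, the argument should be run in those weighted norms, or negative powers of $\delta$ accepted --- a scaling issue the paper's own proof also leaves implicit.
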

	\begin{proof}
		Let $w_{0}\in\Hk{2}{\samplingdomain{\delta}{x}}$ be given. We rewrite the Sobolev equation \cref{eq:Sobolev_equation_variational} in strong formulation as: Find $w:[0,T]\to \Hk{2}{\samplingdomain{\delta}{x}}$ such that
		\begin{align*}
		\Delta_{\vec{M}} \partial_{t} w(t) + \Delta_{\vec{R}} w(t) = 0\quad \text{in }\samplingdomain{\delta}{x} \eqdot
		\end{align*}
		The operators $\Delta_{\vec{M}} = \div\left(\vec{M}(y) \nabla \cdot \right)$ and $\Delta_{\vec{R}} = \div\left(\vec{R}(y) \nabla \cdot \right)$ are weighted Laplace operators. Note, that $\Delta_{\vec{M}}:\Hk{2}{\samplingdomain{\delta}{x}} \to \Lp{2}{\samplingdomain{\delta}{x}}$ is invertible due to the properties of $\vec{M}$, cf. \cref{def:propertiesM}. 
		The operator $\mcal{S}$ defined in \cref{def:op_A} as operator on $\Hk{2}{\samplingdomain{\delta}{x}}$ may be written as $\mcal{S} = \Delta_{\vec{M}}^{-1} \Delta_{\vec{R}}$. Next, we consider the closely related operator $\Delta_{\vec{R}} \Delta_{\vec{M}}^{-1}:\Lp{2}{\samplingdomain{\delta}{x}}\to\Lp{2}{\samplingdomain{\delta}{x}}$. We show that this operator is also monotone and bounded with respect to $\Lp{2}{\samplingdomain{\delta}{x}}$ equipped with the weighted inner product
		\begin{align*}
		\bilinearform{}{\Phi}{\Psi}_{\Delta_{\vec{M}}^{-1}} = \bilinearform{}{\Phi}{-\Delta_{\vec{M}}^{-1}\Psi} \quad \text{for all }\Phi,\Psi\in\Lp{2}{\samplingdomain{\delta}{x}} \eqdot
		\end{align*}
		Note, that the weighted Laplacian has a negative spectrum, and thus the negative operator induces an inner product. Integration by parts yields
		\begin{align*}
		\bilinearform{}{\Delta_{\vec{R}}\Delta_{\vec{M}}^{-1} \Phi}{\Phi}_{\Delta_{\vec{M}}^{-1}} = \bilinearform{}{\Delta_{\vec{R}}\Delta_{\vec{M}}^{-1}\Phi}{-\Delta_{\vec{M}}^{-1}\Phi} = \bilinearform{}{\vec{R}\nabla\left(\Delta_{\vec{M}}^{-1}\Phi\right)}{\nabla \left(\Delta_{\vec{M}}^{-1}\Phi\right)} \geq 0 \eqdot
		\end{align*}
		Additionally, using again integration by parts, the boundedness of $\vec{R}$ as well as the positive definiteness of $\vec{M}$ and the Cauchy--Schwarz inequality we get
		\begin{align*}
		&\seminorm{\bilinearform{}{\Delta_{\vec{R}}\Delta_{\vec{M}}^{-1}\Phi}{\Psi}_{\Delta_{\vec{M}}^{-1}}}{} = \seminorm{\bilinearform{}{\vec{R}\nabla\left(\Delta_{\vec{M}}^{-1} \Phi\right)}{\nabla\left(\Delta_{\vec{M}}^{-1} \Psi\right)}}{} \\
		&\quad\leq \frac{\genericconstant_{\vec{R}}}{\coercivityconstant} \seminorm{\bilinearform{}{\vec{M} \nabla\left(\Delta_{\vec{M}}^{-1} \Phi\right)}{\nabla\left(\Delta_{\vec{M}}^{-1} \Psi\right)}}{} = \frac{\genericconstant_{\vec{R}}}{\coercivityconstant} \seminorm{- \bilinearform{}{\div\left(\vec{M} \nabla\left(\Delta_{\vec{M}}^{-1} \Phi\right) \right)}{\Delta_{\vec{M}}^{-1} \Psi}}{} \\
		&\quad= \frac{\genericconstant_{\vec{R}}}{\coercivityconstant} \seminorm{\bilinearform{}{\Phi}{-\Delta_{\vec{M}}^{-1} \Psi}}{} = \frac{\genericconstant_{\vec{R}}}{\coercivityconstant} \seminorm{\bilinearform{}{\Phi}{\Psi}_{\Delta_{\vec{M}}^{-1}}}{} \leq \frac{\genericconstant_{\vec{R}}}{\coercivityconstant} \norm{\Phi}{\Delta_{\vec{M}}^{-1}} \norm{\Psi}{\Delta_{\vec{M}}^{-1}} \eqdot
		\end{align*}
		Thus, the operator $-\Delta_{\vec{R}} \Delta_{\vec{M}}^{-1}$ is dissipative and generates a contraction semi-group $\euler^{-\Delta_{\vec{R}}\Delta_{\vec{M}}^{-1} t}$.
		Both $-\Delta_{\vec{M}}^{-1} \Delta_{\vec{R}}$ and $-\Delta_{\vec{R}} \Delta_{\vec{M}}^{-1}$ are bounded operators. We use the series representation for their generated semi-groups and define for $n\in\N$ the partial sum
		\begin{align*}
		s_{n}(t) \coloneqq \sum\nolimits_{k=0}^{n}\left(- \Delta_{\vec{R}} \Delta_{\vec{M}}^{-1}\right)^{k} \frac{t^{k}}{k!} \Delta_{\vec{M}} w(0) \eqdot
		\end{align*}
		Since $\Delta_{\vec{M}} w(0)\in\Lp{2}{\samplingdomain{\delta}{x}}$ and due to the properties of $-\Delta_{\vec{R}}\Delta_{\vec{M}}^{-1}$ we just showed, we find the convergence in $\Lp{2}{\samplingdomain{\delta}{x}}$ as
		\begin{align*}
		s_{n}(t) \to s(t) = \euler^{-\Delta_{\vec{R}} \Delta_{\vec{M}}^{-1} t} \Delta_{\vec{M}} w(0) \quad\text{as } n\to \infty\eqdot
		\end{align*}
		Moreover, for $z_{n}(t) \coloneqq \sum_{k=0}^{n}\left(-\Delta_{\vec{M}}^{-1} \Delta_{\vec{R}}\right)^{k} \frac{t^{k}}{k!} w(0)$ we find $\Delta_{\vec{M}} z_{n}(t) = s_{n}(t)$, and
		\begin{align*}
		z_{n}(t) \to w(t) = \euler^{-\Delta_{\vec{M}}^{-1} \Delta_{\vec{R}} t} w(0)\quad \text{as }n\to\infty \eqdot
		\end{align*} 
		This enables us to use the fact that $\Delta_{\vec{M}}$ is closed. To be precise, we conclude from
		\begin{align*}
		z_{n}(t) \to w(t)& \quad \text{ as }n\to\infty \eqcomma \quad
		\Delta_{\vec{M}} z_{n}(t) = s_{n}(t) \to s(t) \quad \text{ as }n\to\infty \eqcomma
		\end{align*}
		that $w(t)\in\operatordomain\left(\Delta_{\vec{M}}\right)$ and
		\begin{align*}
		\Delta_{\vec{M}} \left(\euler^{-\Delta_{\vec{M}}^{-1} \Delta_{\vec{R}} t} w(0)\right) = \Delta_{\vec{M}} w(t) = s(t) = \euler^{-\Delta_{\vec{R}} \Delta_{\vec{M}}^{-1} t} \Delta_{\vec{M}} w(0)\eqdot
		\end{align*}
		Eventually, we find
		\begin{align*}
		\norm{w(t)}{\Hk{2}{\samplingdomain{\delta}{x}}} &\leq \genericconstant \norm{\Delta_{\vec{M}} w(t)}{\Lp{2}{\samplingdomain{\delta}{x}}} 
		= \genericconstant \norm{ s(t)}{\Lp{2}{\samplingdomain{\delta}{x}}} 
		\\
		&\leq \genericconstant \norm{ \euler^{-\Delta_{\vec{R}} \Delta_{\vec{M}}^{-1} t}}{\Delta_{\vec{M}}^{-1} \gets \Delta_{\vec{M}}^{-1}} \norm{\Delta_{\vec{M}} w(0)}{\Delta_{\vec{M}}^{-1}} \leq \genericconstant \norm{w(0)}{\Hk{2}{\samplingdomain{\delta}{x}}} \eqdot
		\end{align*}
	\end{proof}

	\subsection{\Wellposedness of the integro-differential homogeneous system}\label{sec:Well-posedness of the integro-differential homogeneous system}
	Let us give some properties of the effective parameters.
	\begin{lemma}\label{lem:bounded_effective_parameters}
		The effective parameter $\vec{M}^{\eff}$ is positive definite and bounded with the same constant $\coercivityconstant$ as $\vec{M}^{\delta}$, and $\vec{R}^{\eff}$ is positive semi-definite and bounded. The time-dependent parameters satisfy $\vec{G}^{\eff}$, $\vec{J}^{\eff} \in \Ck{\infty}{0,T;\Lp{\infty}{\computationaldomain;\R^{\dimMaxwell \times \dimMaxwell}}}$.
	\end{lemma}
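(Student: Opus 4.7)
The plan is to verify the four properties separately. For all assertions, the key is to use the cell-problem identities \cref{def:cell_prob_wM_reformulated_transformed}, \cref{def:cell_prob_wG_sys_reformulated_transformed}, \cref{def:cell_prob_wN_sys_reformulated_transformed} together with the bilinear structure of the definitions \cref{sys:reformulated_effective_parameters}. Throughout, fix $x\in\computationaldomain$ and, for $\xi\in\R^{\dimMaxwell}$, form the linear combination $V_{\xi}(y) \coloneqq \sum_{j=1}^{\dimMaxwell} \xi_{\secondindex} \wlM(x,y)\in\spaceVmic$. By the bilinearity of \cref{def:M_eff_symmetric_transformed}, one obtains the key identity
\begin{align*}
\vec{M}^{\eff}(x)\xi\cdot\xi = \fint_{\samplingdomain{\delta}{x}} \vec{M}\bigl(x,\tfrac{y}{\delta}\bigr)\bigl(\xi + \grady V_{\xi}(y)\bigr)\cdot \bigl(\xi + \grady V_{\xi}(y)\bigr)\D y \eqdot
\end{align*}

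For coercivity, apply \cref{def:propertiesM} to the integrand and use that the components of $V_{\xi}\in\spaceVmic$ are periodic with zero mean, so $\fint_{\samplingdomain{\delta}{x}}\grady V_{\xi}\D y = \vec{0}$. Expanding the square then yields
$\fint |\xi + \grady V_{\xi}|^{2}\D y = |\xi|^{2} + \fint |\grady V_{\xi}|^{2}\D y \geq |\xi|^{2}$,
from which the lower bound $\coercivityconstant|\xi|^{2}$ follows. For boundedness, test the sum of \cref{def:cell_prob_wM_reformulated_transformed} weighted by $\xi_{\secondindex}$ against $v=V_{\xi}$ to obtain $\fint \vec{M}(\xi+\grady V_{\xi})\cdot\grady V_{\xi}\D y = 0$. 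Substituting back gives $\vec{M}^{\eff}(x)\xi\cdot\xi = \fint \vec{M}(\xi+\grady V_{\xi})\cdot\xi\D y$, and Cauchy--Schwarz combined with the coercivity bound already shown provides a bound of the form $\tfrac{\boundednessconstant^{2}}{\coercivityconstant}|\xi|^{2}$, uniformly in $x$.

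The statements for $\vec{R}^{\eff}$ go through identically with $\vec{R}$ in place of $\vec{M}$: the analogous quadratic identity $\vec{R}^{\eff}(x)\xi\cdot\xi = \fint\vec{R}(\xi+\grady V_{\xi})\cdot(\xi+\grady V_{\xi})\D y$ is manifestly nonnegative since $\vec{R}$ is positive semi-definite, and boundedness follows from $\|\vec{R}\|_{\Lp{\infty}{}} \leq \genericconstant_{\vec{R}}$ together with the bound $\norm{V_{\xi}}{\spaceVmic}\leq C|\xi|$ inherited from \cref{lem:wellposedness_wM_wG0_wN0} applied to each $\wlM$.

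For the time-dependent parameters $\vec{G}^{\eff}$ and $\vec{J}^{\eff}$, the $\Ck{\infty}{0,T;\cdot}$ regularity is inherited directly from $\wlG,\wlN\in\Ck{\infty}{0,T;\spaceVmic}$ established in \cref{lem:wellposedness_wG}, since \cref{def:G_eff_reformulated_transformed,def:J_eff_reformulated_transformed} depend continuously and bilinearly on $\grady\wlG$ (resp.\ $\grady\wlN$) and $\ek+\grady\wkM$, with the bounded $\vec{R}$ as weight. For the $\Lp{\infty}{\computationaldomain}$-regularity in $x$, one applies Cauchy--Schwarz to the integrands and invokes the stability bounds of \cref{lem:wellposedness_wM_wG0_wN0,lem:wellposedness_wG}: the resulting constants depend only on $\coercivityconstant,\boundednessconstant,\genericconstant_{\vec{R}}$ and on $|\samplingdomainname^{\delta}|$, all independent of $x$. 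The same argument applied to the time derivatives (also in $\Ck{\infty}{0,T;\spaceVmic}$ by \cref{lem:wellposedness_wG}) gives uniform bounds for every temporal derivative.

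The main conceptual step is the rewriting of the effective quadratic form through the linear combination $V_{\xi}$ of the elliptic correctors; once this is in place, coercivity, boundedness, and positive semi-definiteness reduce to direct applications of \cref{def:propertiesM} and \cref{def:propertyR}, and the time regularity is essentially a corollary of \cref{lem:wellposedness_wG}. No single step should be particularly hard, the only mild care needed is to ensure that all constants in the stability estimates are independent of $x$, which follows because the parameter bounds $\coercivityconstant,\boundednessconstant,\genericconstant_{\vec{R}}$ are.
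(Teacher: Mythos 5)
Your proposal is correct and takes essentially the same route as the paper: positive semi-definiteness of $\vec{R}^{\eff}$ read off directly from \cref{def:R_eff_reformulated_transformed}, boundedness of $\vec{R}^{\eff}$, $\vec{G}^{\eff}$, $\vec{J}^{\eff}$ via Cauchy--Schwarz combined with the $x$-independent corrector bounds of \cref{lem:wellposedness_wM_wG0_wN0,lem:wellposedness_wG}, and the $\Ck{\infty}{0,T;\cdot}$ regularity inherited from $\wG,\wN\in\Ck{\infty}{0,T;\spaceVmic}$. The only difference is that for $\vec{M}^{\eff}$ the paper simply cites \cite{Jikov1994} and \cite{Bensoussan1978}, whereas you reproduce the classical argument inline through the quadratic-form identity with $V_{\xi}=\sum_{\secondindex}\xi_{\secondindex}\wlM$, which is precisely the argument of those references; your upper bound $\boundednessconstant^{2}/\coercivityconstant$ is slightly weaker than the constant $\boundednessconstant$ obtainable from the variational (minimization) characterization for symmetric $\vec{M}$, but the lemma only claims boundedness, so this is immaterial.
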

	\begin{proof}
		The boundedness of $\vec{M}^{\eff}$ is shown in \cite[Chapter 1.4]{Jikov1994}, while the coercivity with the same constant $\alpha$ as in \cref{def:propertiesM} follows as in \cite[Chapter 2.3]{Bensoussan1978}. The positive semi-definiteness of $\vec{R}^{\delta}$ directly implies the one of $\vec{R}^{\eff}$ by its definition in \cref{def:R_eff_reformulated_transformed}. For fixed $x\in\computationaldomain$ the boundedness follows using the definition in \cref{def:R_eff_reformulated_transformed} by
		\begin{align*}
		\seminorm{\vec{R}^{\eff}(x)_{\firstindex,\secondindex}}{} 
		&\leq \frac{\genericconstant_{\vec{R}}}{\seminorm{\samplingdomainname{\delta}}{}} \left(\norm{\el}{\Lp{2}{\samplingdomain{\delta}{x}}}\left(\norm{\ek}{\Lp{2}{\samplingdomain{\delta}{x}}} + \norm{\wkM(x,\cdot)}{\spaceVmic}\right) \right. \\
		&\quad\left.+ \norm{\wlM(x,\cdot)}{\spaceVmic} \left(\norm{\ek}{\Lp{2}{\samplingdomain{\delta}{x}}} + \norm{ \wkM(x,\cdot)}{\spaceVmic}\right)\right) \leq 4 \frac{\genericconstant_{\vec{R}} \boundednessconstant}{\coercivityconstant} \eqcomma
		\end{align*}
		where we used \cref{lem:wellposedness_wM_wG0_wN0} and the coercivity of $\vec{M}^{\delta}$.	Next, consider the convolution kernel. Using \cref{def:G_eff_reformulated_transformed}, \cref{lem:wellposedness_wM_wG0_wN0,lem:wellposedness_wG} together with \cref{eq:contractionSobolev} yields
		\begin{align*}
		\seminorm{\vec{G}^{\eff}(t,x)_{\firstindex,\secondindex}}{} 
		&\leq \frac{\genericconstant_{\vec{R}}}{\seminorm{\samplingdomainname{\delta}}{}} \norm{ \wlG(t,x,\cdot)}{\spaceVmic} \left(\norm{\ek}{\Lp{2}{\samplingdomain{\delta}{x}}} + \norm{ \wlM(x,\cdot)}{\spaceVmic}\right) \\
		&\leq 2 \frac{\genericconstant_{\vec{R}}\sqrt{\boundednessconstant \seminorm{\samplingdomainname{\delta}}{}}}{\coercivityconstant \seminorm{\samplingdomainname{\delta}}{}} \norm{\wlG(0,x,\cdot)}{\namemicrobilinearform{m}^{\delta}} \leq 4 \left(\frac{\genericconstant_{\vec{R}}}{\coercivityconstant}\right)^{2} \boundednessconstant \eqdot
		\end{align*}
		Finally for the extra source we get from \cref{def:J_eff_reformulated_transformed} with the same techniques as above
		\begin{align*}
		\seminorm{\vec{J}^{\eff}(t,x)_{\firstindex,\secondindex}}{} 
		\leq 2 \frac{\genericconstant_{\vec{R}}\boundednessconstant}{\coercivityconstant} \eqdot
		\end{align*}	
		Therefore, we find $\vec{G}^{\eff},$ $\vec{J}^{\eff} \in \Lp{\infty}{0,T;\Lp{\infty}{\computationaldomain;\R^{\dimMaxwell \times \dimMaxwell}}}$. As pointed out in \cref{lem:wellposedness_wG}, the cell correctors $\wlG$, $\wlN$ are $\mrm{C}^{\infty}$ in time. Thus, a direct consequence is the smoothness in time, i.e., $\vec{G}^{\eff},$ $\vec{J}^{\eff} \in \Ck{\infty}{0,T;\Lp{\infty}{\Omega;\R^{\dimMaxwell\times\dimMaxwell}}}$.
	\end{proof}
	For the variational formulation of the effective Maxwell system \cref{sys:reformulated_effective_Maxwell_system} we introduce bilinear forms $m^{\eff},$ $r^{\eff},$ $a:\spaceVmac \times \spaceVmac \to \R$ such that for every $\vec{\Phi},$ $\vec{\Psi} \in \spaceVmac$
	\begin{subequations}
		\begin{align}
		\bilinearform{m^{\eff}}{\vec{\Phi}}{\vec{\Psi}} &\coloneqq \bilinearform{}{\vec{M}^{\eff}\vec{\Phi}}{\vec{\Psi}} \eqcomma \label{def:bilinearform_m_eff}\\
		\bilinearform{r^{\eff}}{\vec{\Phi}}{\vec{\Psi}} &\coloneqq \bilinearform{}{\vec{R}^{\eff}\vec{\Phi}}{\vec{\Psi}} \eqcomma \label{def:bilinearform_r_eff}\\
		\bilinearform{a}{\vec{\Phi}}{\vec{\Psi}} &\coloneqq \bilinearform{}{\vec{A}\vec{\Phi}}{\vec{\Psi}}\eqdot \label{def:bilinearform_a}
		\end{align}
		Moreover, for $t\in[0,T]$ define $g^{\eff}:[0,T]\times \spaceVmac \times \spaceVmac \to\R$ such that
		\begin{align}
		\bilinearform{g^{\eff}}{t;\vec{\Phi}}{\vec{\Psi}} &\coloneqq \bilinearform{}{\vec{G}^{\eff}(t)\vec{\Phi}}{\vec{\Psi}}\quad \text{for all }\vec{\Phi}, \vec{\Psi}\in \spaceVmac\eqdot \label{def:bilinearform_g_eff}
		\end{align}
	\end{subequations}
	Thanks to the bounds on the parameters, we immediately get the continuity of all bilinear forms, i.e., for all $\vec{\Phi},$ $\vec{\Psi}\in\Lp{2}{\Omega;\R^{\dimMaxwell}}$ and $t\geq 0$ we find
	\begin{align}
	\seminorm{\bilinearform{m^{\eff}}{\vec{\Phi}}{\vec{\Psi}}}{} &\leq \boundednessconstant \norm{\vec{\Phi}}{} \norm{\vec{\Psi}}{} \eqcomma \nonumber \\
	\seminorm{\bilinearform{r^{\eff}}{\vec{\Phi}}{\vec{\Psi}}}{} &\leq 4 \frac{\genericconstant_{\vec{R}}\boundednessconstant}{\coercivityconstant} \norm{\vec{\Phi}}{} \norm{\vec{\Psi}}{} \eqcomma \label{eq:boundedness_bilinearform_r_eff} \\
	\seminorm{\bilinearform{g^{\eff}}{t;\vec{\Phi}}{\vec{\Psi}}}{} &\leq 4 \left(\frac{\genericconstant_{\vec{R}}}{\coercivityconstant}\right)^{2} \boundednessconstant \norm{\vec{\Phi}}{} \norm{\vec{\Psi}}{}\eqdot \label{eq:boundedness_bilinearform_g_eff}
	\end{align}
	Moreover, $\bilinearform{m^{\eff}}{\cdot}{\cdot}$ and $\bilinearform{r^{\eff}}{\cdot}{\cdot}$ are bounded from below by
	\begin{align}
	\bilinearform{m^{\eff}}{\vec{\Phi}}{\vec{\Phi}} \geq \coercivityconstant \norm{\vec{\Phi}}{}^{2} \eqcomma \quad
	\bilinearform{r^{\eff}}{\vec{\Phi}}{\vec{\Phi}} \geq 0 \eqdot \label{eq:definiteness_m_eff_r_eff}
	\end{align}
	With these definitions we search for $\effsolutionu(t) \in \spaceVmac$ such that
	\begin{align}
	\begin{split}\label{eq:bilinear_effective_Maxwell}
	\bilinearform{m^{\eff}}{\partial_{t}\effsolutionu(t)}{\vec{\Phi}} &+ \bilinearform{r^{\eff}}{\effsolutionu(t)}{\vec{\Phi}} + \int_{0}^{t}\bilinearform{g^{\eff}}{t-s;\effsolutionu(s)}{\vec{\Phi}} \D s + \bilinearform{a}{\effsolutionu(t)}{\vec{\Phi}}\\
	&= \bilinearform{m^{\eff}}{\vec{f}(t)}{\vec{\Phi}} - \bilinearform{}{\vec{J}^{\eff}(t)\vec{u}_{0}}{\vec{\Phi}} \quad \text{for all }\vec{\Phi}\in \spaceVmac \eqcomma
	\end{split}\\
	\effsolutionu(0) &= \vec{u}_{0} \qquad \text{in }\computationaldomain \eqcomma \nonumber
	\end{align}
	where $\vec{f}\in\Lp{2}{0,T;\Lp{2}{\computationaldomain}}$ is such that
	\begin{align*}
	\bilinearform{m^{\eff}}{\vec{f}(t)}{\vec{\Phi}} = \bilinearform{}{\vec{g}(t)}{\vec{\Phi}} \quad \text{for all }\vec{\Phi}\in \spaceVmac \eqdot
	\end{align*}

	\begin{theorem}\label{thm:wellposedness_Maxwell_integral}
		Let the assumptions of \cref{thm:homogeneous_system} be satisfied and assume that $\vec{R}^{\delta}$ is positive semi-definite. Then, problem \cref{eq:bilinear_effective_Maxwell} has a unique solution satisfying
		\begin{align}
		\norm{\effsolutionu(t)}{} \leq \euler^{\genericconstant_{\vec{G}^{\eff}}(t)} \left[\frac{1}{\alpha} t \norm{\vec{g}}{\Lp{\infty}{0,t;\Lp{2}{\computationaldomain;\R^{\dimMaxwell}}}} + \left(1+ \frac{1}{\alpha} \norm{\vec{J}^{\eff}}{\Lp{1}{0,t;\Lp{\infty}{\computationaldomain;\R^{\dimMaxwell\times\dimMaxwell}}}}\right) \norm{\solutionu_{0}}{} \right] \eqcomma
		\label{eq:stability_Maxwell_integral}
		\end{align}
		where the exponential growth is determined by
		\begin{align*}
		\genericconstant_{\vec{G}^{\eff}}(t) = \frac{1}{\coercivityconstant} \intt{\norm{\vec{G}^{\eff}}{\Lp{1}{0,s;\Lp{\infty}{\computationaldomain;\R^{\dimMaxwell\times\dimMaxwell}}}}}{s} \eqdot
		\end{align*}
	\end{theorem}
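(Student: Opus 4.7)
The plan is to handle existence and uniqueness by a Picard fixed-point argument treating the convolution as a causal source, and to derive the stability bound \cref{eq:stability_Maxwell_integral} by an energy estimate followed by a Volterra--Grönwall inequality.

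For existence and uniqueness, given $\vec{w}\in\Ck{0}{0,T;\Lp{2}{\computationaldomain;\R^{\dimMaxwell}}}$, I would first solve the auxiliary problem obtained from \cref{eq:bilinear_effective_Maxwell} by replacing $\effsolutionu(s)$ inside the convolution by $\vec{w}(s)$ and moving that term to the right-hand side. By \cref{lem:bounded_effective_parameters}, the effective matrices $\vec{M}^{\eff}$ and $\vec{R}^{\eff}$ obey the structural assumptions \cref{def:propertiesM,def:propertyR}, so the resulting linear first-order evolution problem is of the same type as \cref{sys:Maxwell_Debye_ODE_heterogeneous} and is well posed by the semigroup/Faedo--Galerkin theory invoked there. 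The boundedness \cref{eq:boundedness_bilinearform_g_eff} of $g^{\eff}$ shows that the map $\vec{w}\mapsto \vec{u}_{\vec{w}}$ is a strict contraction on $\Ck{0}{0,T^{\ast};\Lp{2}{\computationaldomain;\R^{\dimMaxwell}}}$ for $T^{\ast}>0$ small enough; Banach's fixed-point theorem yields a local solution, and the problem being linear, the argument iterates to cover $[0,T]$.

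For the stability bound, I would test \cref{eq:bilinear_effective_Maxwell} with $\vec{\Phi}=\effsolutionu(t)$. The curl contribution vanishes, $\bilinearform{a}{\effsolutionu}{\effsolutionu}=0$, because $\vec{A}$ is skew-symmetric on $\spaceVmac$ (the boundary term in an integration by parts disappears through $\vec{n}\times\effelectricfield=\vec{0}$, and the polarization block of $\vec{A}$ is zero), whereas $\bilinearform{r^{\eff}}{\effsolutionu}{\effsolutionu}\geq 0$ by \cref{eq:definiteness_m_eff_r_eff}. Bounding the source, the extra term $\vec{J}^{\eff}\vec{u}_{0}$, and the convolution via Cauchy--Schwarz together with the $\Lp{\infty}$-bounds from \cref{lem:bounded_effective_parameters}, then dividing by $\norm{\effsolutionu(t)}{m^{\eff}}$ (with the standard regularization $\sqrt{\norm{\cdot}{m^{\eff}}^{2}+\varepsilon}$, $\varepsilon\to 0$), and using the coercivity $\coercivityconstant\norm{\cdot}{}^{2}\leq \norm{\cdot}{m^{\eff}}^{2}$ leads after time integration to a linear Volterra inequality for $z(t)\coloneqq\norm{\effsolutionu(t)}{}$ in which the source/initial part yields exactly the bracketed expression in \cref{eq:stability_Maxwell_integral}.

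The main obstacle is the final Grönwall step. A direct application of the classical linear Grönwall lemma produces an exponent of order $\tfrac{t}{\coercivityconstant}\norm{\vec{G}^{\eff}}{\Lp{1}{0,t;\Lp{\infty}{\computationaldomain;\R^{\dimMaxwell\times\dimMaxwell}}}}$, whereas the claimed exponent $\genericconstant_{\vec{G}^{\eff}}(t) = \tfrac{1}{\coercivityconstant}\int_{0}^{t}\norm{\vec{G}^{\eff}}{\Lp{1}{0,s;\Lp{\infty}{\computationaldomain;\R^{\dimMaxwell\times\dimMaxwell}}}}\D s$ has a finer nested structure. Obtaining it requires first applying Fubini to rewrite the double convolution integral as $\int_{0}^{t}z(s)\bigl(\int_{0}^{t-s}\norm{\vec{G}^{\eff}(\tau)}{\Lp{\infty}{\computationaldomain;\R^{\dimMaxwell\times\dimMaxwell}}}\D\tau\bigr)\D s$, recognising the inner integral as a monotone-in-$t$ kernel, and invoking a generalised Grönwall lemma with non-constant kernel to arrive at the claimed exponential factor $\euler^{\genericconstant_{\vec{G}^{\eff}}(t)}$.
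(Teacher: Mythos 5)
Your proposal is correct and arrives at exactly the constants in \cref{eq:stability_Maxwell_integral}, but it deviates from the paper's proof in two places, so a comparison is in order. For existence and uniqueness the paper does not argue at all: it cites the Faedo--Galerkin construction of Bossavit--Griso (Proposition~1 there) for the full integro-differential system. Your Picard scheme (freeze the convolution, solve the damped Maxwell system by semigroup theory, contract for small $T^{\ast}$, iterate by linearity) is a viable self-contained alternative; just note that to obtain a variational solution with values in $\spaceVmac$, and not merely a mild solution in $\Lp{2}{\computationaldomain;\R^{\dimMaxwell}}$, you need the data regularity $\solutionu_{0}\in\spaceVmac$ and $\vec{g}\in\Hk{1}{0,T;\Lp{2}{\computationaldomain;\R^{\dimMaxwell}}}$ from \cref{eq_initialcondition_source_assumption}. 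For the stability bound your energy test coincides with the paper's (and your $\varepsilon$-regularized division by the norm is the careful version of the paper's informal division by $\norm{\effsolutionu(t)}{}$), but the Gronwall endgame differs. The paper never applies Fubini: before integrating in time it bounds the convolution term by $\norm{\vec{G}^{\eff}}{\Lp{1}{0,t;\Lp{\infty}{\computationaldomain;\R^{\dimMaxwell\times\dimMaxwell}}}}\norm{\effsolutionu}{\Lp{\infty}{0,t;\Lp{2}{\computationaldomain;\R^{\dimMaxwell}}}}$, integrates, and then passes to the running supremum $y(t)=\norm{\effsolutionu}{\Lp{\infty}{0,t;\Lp{2}{\computationaldomain;\R^{\dimMaxwell}}}}$; this turns the estimate into the classical Volterra inequality $y(t)\le C_{0}(t)+\intt{C_{1}(s)\,y(s)}{s}$ with $C_{1}(s)=\frac{1}{\coercivityconstant}\norm{\vec{G}^{\eff}}{\Lp{1}{0,s;\Lp{\infty}{\computationaldomain;\R^{\dimMaxwell\times\dimMaxwell}}}}$, so plain Gronwall delivers the nested exponent $\genericconstant_{\vec{G}^{\eff}}(t)$ with no generalized lemma needed. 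Your route also closes, and your stated worry is unfounded: after Fubini the kernel is $k(t-s)$ with $k(u)=\int_{0}^{u}\norm{\vec{G}^{\eff}(\tau)}{\Lp{\infty}{\computationaldomain;\R^{\dimMaxwell\times\dimMaxwell}}}\D\tau$ nondecreasing, so for fixed final time $T$ one may freeze the kernel, $z(t)\le C_{0}(T)+\frac{1}{\coercivityconstant}\int_{0}^{t}k(T-s)\,z(s)\D s$ for $t\le T$, apply the classical Gronwall lemma, and evaluate at $t=T$, using $\int_{0}^{T}k(T-s)\D s=\int_{0}^{T}k(u)\D u=\coercivityconstant\,\genericconstant_{\vec{G}^{\eff}}(T)$. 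Thus both mechanisms yield the identical fine exponent; the paper's supremum trick buys a one-line application of the standard lemma, while your Fubini route makes the monotone structure of the iterated kernel explicit at the cost of invoking (or proving) the frozen-kernel Gronwall variant.
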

	\begin{proof}
		The proof of the \wellposedness relies on the Faedo--Galerkin method. The details can be found in \cite[Proposition 1]{Bossavit2005}, where also an estimate similar to \cref{eq:stability_Maxwell_integral} is proved. Our bound results from these ideas but provides precise constants, in particular the growth due to the Gronwall estimate. Consider the system \cref{eq:bilinear_effective_Maxwell} with test function $\vec{\Phi} = \effsolutionu(t)$. The product rule, as well as \cref{eq:definiteness_m_eff_r_eff} and the skew-adjointness of the Maxwell operator yields
		\begin{align*}
		&\alpha \left(\frac{d}{dt} \norm{\effsolutionu(t)}{}\right) \norm{\effsolutionu(t)}{} \leq \frac{1}{2} \frac{d}{dt} \bilinearform{m^{\eff}}{\effsolutionu(t)}{\effsolutionu(t)} = \bilinearform{m^{\eff}}{\partial_{t} \effsolutionu(t)}{\effsolutionu(t)} \\
		&\quad\leq \seminorm{\bilinearform{}{\vec{g}(t)}{\effsolutionu(t)}}{} + \seminorm{\bilinearform{}{\vec{J}^{\eff}(t) \solutionu_{0}}{\effsolutionu(t)}}{} + \seminorm{\intt{\bilinearform{g^{\eff}}{t-s;\effsolutionu(s)}{\effsolutionu(t)}}{s}}{} \eqdot
		\end{align*}
		Applying the Cauchy-Schwarz inequality in the last line, dividing by $\norm{\effsolutionu(t)}{}$ and using the H\"older inequality yields
		\begin{align*}
		\alpha \frac{d}{dt} \norm{\effsolutionu(t)}{} &\leq \norm{\vec{g}(t)}{} + \norm{\vec{J}^{\eff}(t) }{\Lp{\infty}{\computationaldomain; \R^{\dimMaxwell \times\dimMaxwell}}} \norm{ \solutionu_{0}}{ } \\
		&\quad+ \intt{\norm{\vec{G}^{\eff}(t-s)}{\Lp{\infty}{\computationaldomain; \R^{\dimMaxwell\times\dimMaxwell}}}}{s}\norm{\effsolutionu}{\Lp{\infty}{0,t; \Lp{2}{\computationaldomain; \R^{\dimMaxwell}}}} \eqdot
		\end{align*}
		Next, we integrate over $[0,t]$, resulting in
		\begin{align*}
		\alpha \norm{\effsolutionu(t)}{} &\leq \alpha \norm{\effsolutionu(0)}{} + \intt{\norm{\vec{g}(s)}{}}{s} + \intt{\norm{\vec{J}^{\eff}(s) }{\Lp{\infty}{\computationaldomain; \R^{\dimMaxwell \times\dimMaxwell}}}}{s} \norm{ \solutionu_{0}}{ } \\
		&\quad+ \intt{\int_{0}^{s} \norm{\vec{G}^{\eff}(s-r)}{\Lp{\infty}{\computationaldomain; \R^{\dimMaxwell\times\dimMaxwell}}} \D r\norm{\effsolutionu}{\Lp{\infty}{0,s; \Lp{2}{\computationaldomain; \R^{\dimMaxwell}}}}}{s} \eqdot
		\end{align*}
		The final step is to take the supremum over $[0,t]$ to obtain
		\begin{align*}
		\norm{\effsolutionu}{\Lp{\infty}{0,t;\Lp{2}{\computationaldomain;\R^{\dimMaxwell}}}} \leq 
		C_0(t) + \intt{C_1(s) \norm{\effsolutionu}{\Lp{\infty}{0,s;\Lp{2}{\computationaldomain;\R^{\dimMaxwell}}}}}{s} \eqcomma	
		\end{align*}
		where
		\begin{align*}
		\begin{split}
		C_{0}(t) &\coloneqq \left(1 + \frac{1}{\alpha} \intt{\norm{\vec{J}^{\eff}(s) }{\Lp{\infty}{\computationaldomain; \R^{\dimMaxwell \times\dimMaxwell}}}}{s}\right) \norm{ \solutionu_{0}}{ } + \frac{1}{\alpha} \intt{\norm{\vec{g}(s)}{}}{s}
		\end{split}\eqcomma \\
		C_{1}(t) &\coloneqq \frac{1}{\alpha} \intt{\norm{\vec{G}^{\eff}(t-s)}{\Lp{\infty}{\computationaldomain; \R^{\dimMaxwell\times\dimMaxwell}}}}{s} = \frac{1}{\alpha} \norm{\vec{G}^{\eff}}{\Lp{1}{0,t;\Lp{\infty}{\computationaldomain;\R^{\dimMaxwell\times\dimMaxwell}}}} \eqdot 
		\end{align*}
		We apply Gronwall's inequality, which yields
		\begin{align*}
		\norm{\effsolutionu}{\Lp{\infty}{0,t;\Lp{2}{\computationaldomain;\R^{\dimMaxwell}}}} &\leq \euler^{\intt{C_{1}(s)}{s}} C_0(t) =
		\exp\left({\genericconstant_{\vec{G}^{\eff}}(t)}\right) C_0(t) \eqdot
		\end{align*}
		Since the supremum is bounded, we get the same estimate for all $s\in[0,t]$, i.e., \cref{eq:stability_Maxwell_integral}.
	\end{proof}

	\section{The Finite Element Heterogeneous Multiscale Method}\label{sec:The Finite Element Heterogeneous Multiscale Method}
	For the space discretization we choose the finite element method. The implementation we strive for uses hexahedral elements. Hence, from now on, assume that we have an adjacent and shape regular family of triangulations $\{\mcal{T}_{H}\}_{H>0}$ of the domain $\computationaldomain$ in parallelepipeds. Other choices of elements are possible and the analysis transfers directly to those variations. The parameter $ H $ represents the maximum diameter of all elements $K\in\mcal{T}_{H}$. All quantities that are labeled with an $H$ are macroscopic expressions, as the triangulation $\mcal{T}_{H}$. For microscopic quantities we use $h$ instead. We define the space of polynomials with maximal degree $\ell$ in the first, $m$ in the second and $n$ in the third component on $K\in\mcal{T}_{H}$ as $\mcal{Q}^{\ell,m,n}(K)$.
	On every element $K\in\mcal{T}_{H}$ of the triangulation we choose a quadrature formula consisting of $Q_{K}\in\N$ quadrature points $x_{K}^{\quadratureindex}\in\overline{K}$ and weights $\weight_{K}^{\quadratureindex}\in\R_{\geq 0}$, $\quadratureindex=1,\dots,Q_{K}$. We assume that the quadrature is exact for polynomials in $\mcal{Q}^{2\ell,2\ell,2\ell}(K)$, $\ell\in\N$. Hence, for $p\in\mcal{Q}^{2\ell,2\ell,2\ell}(K)$ we have
	\begin{align}
	\int_{K} p(x) \D x = \sum\nolimits_{\quadratureindex = 1}^{Q_{K}} \weight_{K}^{\quadratureindex} p(x_{K}^{\quadratureindex}) \eqdot \label{ass:quadrature}
	\end{align}
	Note that we use positive weights, e.g. Gaussian rules, to ensure that the resulting bilinear forms keep, for instance, their positivity.
	
	\subsection{N\'{e}d\'{e}lec finite elements}\label{sec:Nedelec Finite Elements}
	For the macroscopic Maxwell system we use $\Hcurl{\computationaldomain}$-conforming N\'{e}d\'{e}lec elements. For $\ell\in\N$ we define
	\begin{align*}
	\mcal{Q}^\ell_\text{N\'{e}délec}(K) \coloneqq \mcal{Q}^{\ell-1,\ell,\ell}(K)\times \mcal{Q}^{\ell,\ell-1,\ell}(K) \times \mcal{Q}^{\ell,\ell,\ell-1}(K) \quad \text{ for all }K\in\mcal{T}_H \eqdot
	\end{align*}
	Denote the space of N\'{e}d\'{e}lec's elements of the first type of order $\ell\in\N$ by
	\begin{align*}
	\Nedelec{\ell}{\mcal{T}_{H}}\coloneqq \left\lbrace v_H\in \Hcurl{\Omega}\, :\, v_H|_K\in \mcal{Q}_\text{N\'{e}d\'{e}lec}^\ell(K)\text{ for all }K\in\mcal{T}_H\right\rbrace \eqcomma
	\end{align*}
	and further define
	\begin{align*}
	\DirichletNedelec{\ell}{\mcal{T}_{H}}\coloneqq \Hcurldirichlet{\Omega} \cap \Nedelec{\ell}{\mcal{T}_{H}} \eqdot
	\end{align*}
	We obtain the following interpolation error estimate that can be found in \cite[Theorem 6.6]{Monk2003}.
	\begin{theorem}\label{thm:Nedelec_interpolation}
		Let $u\in\Hk{\ell+1}{\Omega;\R^{3}}$. There exists a global interpolation operator\\ $\mcal{I}_H:\Hk{\ell+1}{\Omega;\R^{3}} \rightarrow \Nedelec{\ell}{\mcal{T}_{H}}$ for N\'{e}d\'{e}lec elements of the first type, such that
		\begin{align}
		\lVert u - \mcal{I}_H u\rVert_{\Hcurl{\Omega}}\leq C H^\ell\lvert u \rvert_{\Hk{\ell+1}{\Omega;\R^{3}}}\eqdot \label{def:Nedelec_interpolation}
		\end{align}
	\end{theorem}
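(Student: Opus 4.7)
The plan is to follow the classical approach for $\mathrm{H}(\curl)$-conforming interpolation on reference elements. First, I would define a local interpolant $\mcal{I}_K$ on each $K\in\mcal{T}_H$ through the canonical Nédélec degrees of freedom of order $\ell$, namely moments against polynomials of appropriate degree on the edges, faces, and interior of $K$. For $u\in\Hk{\ell+1}{\Omega;\R^{3}}$ these functionals are well defined via trace theorems, and the tangential continuity encoded by the edge and face moments guarantees that the glued operator $\mcal{I}_H$ takes values in $\Nedelec{\ell}{\mcal{T}_{H}}$ and reduces to the identity on $\mcal{Q}^\ell_\text{N\'{e}délec}(K)$.

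Next, I would transport the problem to a reference parallelepiped $\hat K$ through the affine map $F_K$ and the covariant Piola transform, which is the natural change of variable for $\mathrm{H}(\curl)$. This transform commutes with the Nédélec interpolation operator (so that the reference interpolant $\hat{\mcal{I}}$ is well defined uniformly in $K$) and, up to the cofactor of $J_K$, intertwines the curl. Since $\hat{\mcal{I}}$ is a bounded projector onto $\mcal{Q}^\ell_\text{N\'{e}délec}(\hat K)$ mapping $\Hk{\ell+1}{\hat K;\R^{3}}$ into itself, a direct application of the Bramble--Hilbert lemma yields
\begin{align*}
\norm{\hat u - \hat{\mcal{I}}\hat u}{\Lp{2}{\hat K;\R^{3}}} + \norm{\curl(\hat u - \hat{\mcal{I}}\hat u)}{\Lp{2}{\hat K;\R^{3}}} \leq \genericconstant\,\seminorm{\hat u}{\Hk{\ell+1}{\hat K;\R^{3}}}\eqdot
\end{align*}

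Finally, using shape regularity (so that $\lvert\det J_K\rvert\sim H^{3}$, $\norm{J_K}{}\sim H$, and $\norm{J_K^{-1}}{}\sim H^{-1}$), I would scale this estimate back to $K$, carefully tracking the factors of $J_K^{-T}$ and $\operatorname{cof}(J_K)$ produced by the Piola transform on both $u-\mcal{I}_K u$ and its curl. This yields the local estimate
\begin{align*}
\norm{u - \mcal{I}_K u}{\Hcurl{K}} \leq \genericconstant\, H^{\ell}\, \seminorm{u}{\Hk{\ell+1}{K;\R^{3}}}\eqdot
\end{align*}
Squaring and summing over all $K\in\mcal{T}_H$ then gives \cref{def:Nedelec_interpolation}. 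The principal obstacle is the careful bookkeeping of the powers of $H$ coming from the Piola transform, particularly for the curl contribution, and verifying that the resulting constant depends only on the shape-regularity of the family $\{\mcal{T}_H\}_{H>0}$ and not on $K$ or $H$; this is precisely where the structural compatibility of the Piola transform with Nédélec elements is indispensable.
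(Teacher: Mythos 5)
The paper offers no proof of \cref{thm:Nedelec_interpolation} at all --- it is quoted directly from \cite[Theorem 6.6]{Monk2003} --- so your attempt is measured against the standard textbook argument, whose skeleton (canonical degrees of freedom, covariant pullback to a reference element, which is affine here since the mesh consists of parallelepipeds, reference estimate, scaling) you reproduce correctly. The genuine gap is the sentence ``a direct application of the Bramble--Hilbert lemma yields'' the reference bound by $\seminorm{\hat u}{\Hk{\ell+1}{\hat K;\R^{3}}}$. For Bramble--Hilbert (Deny--Lions) to produce the $(\ell+1)$-st seminorm, the error functional $\hat u \mapsto \norm{\hat u - \hat{\mcal{I}}\hat u}{\Hcurl{\hat K}}$ would have to vanish on \emph{all} vector polynomials of total degree $\ell$, and the first-kind N\'ed\'elec space $\mcal{Q}^{\ell-1,\ell,\ell}\times\mcal{Q}^{\ell,\ell-1,\ell}\times\mcal{Q}^{\ell,\ell,\ell-1}$ does not contain them: for instance $\hat u = \left(\hat x_{1}^{\ell},0,0\right)^{T}$ is not reproduced, so the error there is nonzero. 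The largest full polynomial space the interpolant preserves is that of degree $\ell-1$, and running your one-shot argument with it leaves, after scaling back (recall that under the covariant transform the curl carries the cofactor of $J_{K}$, i.e.\ one extra factor $H^{-1}$ relative to the field itself), the suboptimal term $H^{\ell-1}\seminorm{u}{\Hk{\ell}{K;\R^{3}}}$ in the curl contribution.

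The classical repair is precisely the commuting-diagram property that you mention in passing but never exploit: $\curl\left(\mcal{I}_{K}u\right) = \vec{w}_{K}\left(\curl u\right)$, where $\vec{w}_{K}$ is the $\mrm{H}\left(\div\right)$-conforming interpolant onto $\mcal{Q}^{\ell,\ell-1,\ell-1}\times\mcal{Q}^{\ell-1,\ell,\ell-1}\times\mcal{Q}^{\ell-1,\ell-1,\ell}$. One must therefore split the $\Hcurl{K}$-error: the curl part becomes a pure interpolation error for $\curl u \in \Hk{\ell}{K;\R^{3}}$ in a space containing all vector polynomials of degree $\ell-1$, and Bramble--Hilbert plus scaling then give $\genericconstant H^{\ell}\seminorm{\curl u}{\Hk{\ell}{K;\R^{3}}} \leq \genericconstant H^{\ell}\seminorm{u}{\Hk{\ell+1}{K;\R^{3}}}$ with no loss; the $\mrm{L}^{2}$ part is estimated separately and yields $\genericconstant H^{\ell}\seminorm{u}{\Hk{\ell}{K;\R^{3}}}$, consistent with Monk's formulation (cf.\ \cite[Theorem 5.41]{Monk2003}), which carries the full $\Hk{\ell+1}{\Omega;\R^{3}}$-norm on the right-hand side. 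Two minor further points: for $\ell = 1$ the well-definedness of the edge moments is not a routine trace theorem, since tangential traces on one-dimensional edges require $\Hk{2}{K}$-type regularity (see \cite[Lemma 5.38]{Monk2003}), which your appeal to ``trace theorems'' glosses over; and ``bounded projector mapping $\Hk{\ell+1}{\hat K;\R^{3}}$ into itself'' should read ``bounded from $\Hk{\ell+1}{\hat K;\R^{3}}$ into $\Hcurl{\hat K}$''. With the commuting-diagram split inserted, the remainder of your plan is sound.
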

	
	\subsection{Lagrange finite elements}\label{sec:Lagrange Finite Elements}
	The cell problems are discretized using standard Lagrangian finite elements. Therefore, as above, we introduce a triangulation $\mcal{T}_{h}$ of the sampling domain $\samplingdomain{\delta}{x}$. The space of Lagrangian elements with periodic boundary conditions is given as
	\begin{align*}
	\spaceVh \coloneqq \left\lbrace v_{h}\in\Hk{1}{\samplingdomain{\delta}{x}}\with v_{h}|_{K} \in \mcal{Q}^{k,k,k}(K)\text{ for all }K\in\mcal{T}_{h}\right\rbrace\eqdot
	\end{align*}
	We get an interpolation error estimate as in \cite[Theorem 3.2.1]{Ciarlet2002} or \cite[Theorem 6.11]{Monk2003}.
	\begin{theorem}\label{thm:Lagrange_interpolation}
		Let $u\in\Hk{k+1}{\samplingdomain{\delta}{x};\R}$. For the triangulation $\mcal{T}_{h}$ of $\samplingdomain{\delta}{x}$ there exists an interpolation operator $\Pi_{h}:\Hk{k+1}{\samplingdomain{\delta}{x};\R}\to \spaceVh$ such that the following estimate holds
		\begin{align}
		\norm{u - \Pi_{h}u}{\Hk{1}{\samplingdomain{\delta}{x};\R}}\leq C h^{k} \seminorm{u}{\Hk{k+1}{\samplingdomain{\delta}{x};\R}}\eqdot \label{def:Lagrange_interpolation}
		\end{align}
	\end{theorem}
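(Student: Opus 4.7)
The claim is the classical Lagrange interpolation error estimate specialized to a periodic tensor-product (hexahedral) finite element space, so my plan is to reduce it to the well-established local estimate via a reference element argument, and then address the periodicity separately.

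First, I would fix a reference element $\hat{K}=(0,1)^3$ and equip it with the tensor-product Lagrange nodes of degree $k$, yielding a reference interpolation operator $\hat{\Pi}:\Ck{0}{\hat{K}}\to\mcal{Q}^{k,k,k}(\hat{K})$. Since $\Hk{k+1}{\hat{K}}\hookrightarrow\Ck{0}{\hat{K}}$ in three dimensions for $k\geq 1$ (and $k+1>3/2$), $\hat{\Pi}$ is well defined on $\Hk{k+1}{\hat{K}}$ and bounded. By construction $\hat{\Pi}\hat{p}=\hat{p}$ for every $\hat{p}\in\mcal{Q}^{k,k,k}(\hat{K})$, so in particular for every polynomial of total degree at most $k$. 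The Bramble--Hilbert lemma then gives
\begin{align*}
\norm{\hat{v}-\hat{\Pi}\hat{v}}{\Hk{1}{\hat{K}}}\leq \genericconstant\,\seminorm{\hat{v}}{\Hk{k+1}{\hat{K}}}\quad \text{for all }\hat{v}\in\Hk{k+1}{\hat{K}} \eqdot
\end{align*}

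Next, for every $K\in\mcal{T}_{h}$ let $F_{K}:\hat{K}\to K$ be the affine map from the reference parallelepiped to $K$ (since the triangulation consists of parallelepipeds, $F_{K}$ can be chosen affine; for genuinely multilinear maps the argument extends with shape regularity providing uniform bounds on the Jacobian). Define the local interpolant $\Pi_{h}u|_{K}\coloneqq(\hat{\Pi}(u\circ F_{K}))\circ F_{K}^{-1}$. Standard scaling for affine maps in 3D yields, with $h_{K}=\operatorname{diam}(K)$ and using shape regularity,
\begin{align*}
\seminorm{v}{\Hk{m}{K}}\leq \genericconstant\, h_{K}^{-m+3/2}\seminorm{v\circ F_{K}}{\Hk{m}{\hat{K}}}\eqcomma\quad \seminorm{\hat{v}}{\Hk{m}{\hat{K}}}\leq \genericconstant\, h_{K}^{m-3/2}\seminorm{\hat{v}\circ F_{K}^{-1}}{\Hk{m}{K}}\eqdot
\end{align*}
Chaining these two estimates with the Bramble--Hilbert bound above gives the local estimate
\begin{align*}
\norm{u-\Pi_{h}u}{\Hk{1}{K}}\leq \genericconstant\, h_{K}^{k}\seminorm{u}{\Hk{k+1}{K}}\eqdot
\end{align*}
Squaring and summing over $K\in\mcal{T}_{h}$, then taking $h=\max_{K}h_{K}$, delivers the claimed global bound.

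Finally I would address the periodicity built into $\spaceVh$. Because opposite nodes on $\partial \samplingdomain{\delta}{x}$ are identified, $\Pi_{h}u$ needs to be single-valued at identified nodes. Since $u\in\Hk{k+1}{\samplingdomain{\delta}{x};\R}$ is continuous and $u$ takes equal values at identified nodal points whenever $u$ is itself periodic, $\Pi_{h}u$ lies in $\spaceVh$ for periodic inputs; for inputs without periodicity one replaces $\Pi_{h}$ on boundary degrees of freedom by an average of the two nodal values on the identified faces, which does not alter the Bramble--Hilbert reference bound. The main non-routine point here is this periodic-compatibility adjustment; everything else is a direct invocation of the Bramble--Hilbert lemma together with standard affine scaling, as cited in \cite{Ciarlet2002,Monk2003}.
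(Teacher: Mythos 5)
Your main line of argument---reference-element interpolation, Bramble--Hilbert via reproduction of $P_k\subset\mcal{Q}^{k,k,k}(\hat{K})$ together with the embedding $\Hk{k+1}{\hat{K}}\hookrightarrow\Ck{0}{\hat{K}}$, affine scaling (legitimate here, since parallelepipeds are affine images of the unit cube), and summation over elements---is precisely the classical proof behind the citations the paper gives in place of its own proof (\cite[Theorem 3.2.1]{Ciarlet2002}, \cite[Theorem 6.11]{Monk2003}), and your scaling exponents $h_K^{-m+3/2}$ and $h_K^{m-3/2}$ are correct. The genuine flaw sits exactly in the clause you single out as the main non-routine point: the claim that, for non-periodic inputs, replacing identified boundary degrees of freedom by the average of the two nodal values ``does not alter the Bramble--Hilbert reference bound.'' It does, and no repair is possible, because if $\Pi_h$ maps into a genuinely periodic space then \cref{def:Lagrange_interpolation} is simply false for non-periodic $u$: take $k\geq 1$ and $u(y)=y_1$, so that $\seminorm{u}{\Hk{k+1}{\samplingdomainname{\delta}}}=0$; the estimate would force $\Pi_h u = u$, yet $u$ is not periodic and hence cannot lie in a periodic $\spaceVh$. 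The mechanism of failure is that your averaging destroys locality: on a boundary element $K$, the averaged nodal value differs from $u$'s own nodal value by $\frac{1}{2}\lvert u(x)-u(x')\rvert$, where $x'$ lies on the opposite face of the sampling domain, an $O(1)$ quantity that is not controlled by $\seminorm{u}{\Hk{k+1}{K}}$, so the local estimate on boundary elements breaks down.

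Fortunately the flawed clause is also unnecessary. The displayed definition of $\spaceVh$ in the paper carries no periodicity constraint in the set-builder formula, so the theorem as literally stated is the plain Ciarlet/Monk result, which your first three paragraphs establish. Moreover, every application of the estimate in the paper interpolates the cell correctors $\wlM$, $\wlG(0,\cdot,\cdot)$, $\wlN(0,\cdot,\cdot)$, which lie in $\Hper{\samplingdomain{\delta}{x};\R^{\dimMicro}}$ and are continuous under the assumed $\Hk{k+1}{\samplingdomainname{\delta}}$-regularity; for such periodic inputs the plain nodal interpolant takes equal values at identified nodes and is automatically periodic, exactly as you observe. So the correct fix is to delete the averaging sentence and restrict the periodic-compatibility remark to periodic $u$. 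While editing, add the one-line verification that the element-wise interpolant is continuous across interior faces (the trace of a mapped $\mcal{Q}^{k,k,k}$ function on a face is determined by the nodes on that face, which neighboring elements share), which is what places $\Pi_h u$ in $\Hk{1}{\samplingdomain{\delta}{x}}$ in the first place; with these two adjustments your proof is complete and coincides with the cited standard argument.
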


	\subsection{The HMM framework and the application to Maxwell's equations}\label{sec:The HMM framework and the application to Maxwell's equations}
	Consider the effective system given in \cref{eq:bilinear_effective_Maxwell}. We choose the finite dimensional subspace of $\spaceVmac$, cf. \cref{def:spaceVmac}, as
	\begin{align}\label{def:spaceVH}
	\spaceVH \coloneqq \DirichletNedelec{\ell}{\mcal{T}_{H}} \times \Nedelec{\ell}{\mcal{T}_{H}}^{N_{E}}\times \Nedelec{\ell}{\mcal{T}_{H}} \subseteq \spaceVmac \eqdot
	\end{align}
	On $\spaceVH$ we use the inner product
	\begin{align*}
	\bilinearform{}{\vec{\Phi}_{H}}{\vec{\Psi}_{H}}_{H} = \sum\nolimits_{K\in\mcal{T}_{H}}\sum\nolimits_{\quadratureindex=1}^{Q_{K}} \weight_{K}^{\quadratureindex} \vec{\Phi}_{H}(x_{K}^{\quadratureindex}) \cdot \vec{\Psi}_{H}(x_{K}^{\quadratureindex})\quad \text{for all }\vec{\Phi}_{H},\vec{\Psi}_{H}\in\spaceVH \eqdot
	\end{align*}
	The discrete counterparts to the bilinear forms defined in \cref{def:bilinearform_m_eff} - \cref{def:bilinearform_g_eff} are $m_{H}^{\eff},$ $r_{H}^{\eff},$ $a_{H}:\spaceVH \times\spaceVH \to \R$ such that for every $\vec{\Phi}_{H},$ $\vec{\Psi}_{H}\in\spaceVH$
	\begin{align}
	\bilinearform{m_{H}^{\eff}}{\vec{\Phi}_{H}}{\vec{\Psi}_{H}} &\coloneqq \bilinearform{}{\vec{M}^{\eff}\vec{\Phi}_{H}}{\vec{\Psi}_{H}}_{H} \eqcomma \quad
	\bilinearform{r_{H}^{\eff}}{\vec{\Phi}_{H}}{\vec{\Psi}_{H}} \coloneqq \bilinearform{}{\vec{R}^{\eff}\vec{\Phi}_{H}}{\vec{\Psi}_{H}}_{H} \eqcomma \nonumber
	\\
	\bilinearform{a_{H}}{\vec{\Phi}_{H}}{\vec{\Psi}_{H}} &\coloneqq \bilinearform{}{\vec{A}_{H}\vec{\Phi}_{H}}{\vec{\Psi}_{H}}_{H} \eqcomma \label{def:bilinearform_a_H}
	\end{align}
	where $\vec{A}_{H}$ is the discretization of $\vec{A}$. Moreover, for $t\in[0,T]$ define $g_{H}^{\eff}:[0,T]\times\spaceVH \times \spaceVH \to\R$ such that
	\begin{align*}
	\bilinearform{g_{H}^{\eff}}{t;\vec{\Phi}_{H}}{\vec{\Psi}_{H}} &\coloneqq \bilinearform{}{\vec{G}^{\eff}(t)\vec{\Phi}_{H}}{\vec{\Psi}_{H}}_{H} \quad \text{for all }\vec{\Phi}_{H}, \vec{\Psi}_{H} \in\spaceVH \eqdot
	\end{align*}
	The semi-discrete formulation of the effective Maxwell system is to find $\effsolutionu_{H}:[0,T]\to \spaceVH$ such that
	\begin{align}
	\begin{split}\label{eq:bilinear_discrete_effective_Maxwell}
	&\bilinearform{m^{\eff}_{H}}{\partial_{t}\effsolutionu_{H}(t)}{\vec{\Phi}_{H}} + \bilinearform{r^{\eff}_{H}}{\effsolutionu_{H}(t)}{\vec{\Phi}_{H}} + \int_{0}^{t}\bilinearform{g^{\eff}_{H}}{t-s;\effsolutionu_{H}(s)}{\vec{\Phi}_{H}} \D s \\
	&+ \bilinearform{a_{H}}{\effsolutionu_{H}(t)}{\vec{\Phi}_{H}} 
	= \bilinearform{m^{\eff}_{H}}{\vec{f}_{H}(t)}{\vec{\Phi}_{H}} - \bilinearform{}{\vec{J}^{\eff}(t)\solutionu_{0,H}}{\vec{\Phi}_{H}}_{H} \quad \text{for all }\vec{\Phi}_{H}\in\spaceVH \eqcomma
	\end{split} \\
	&\effsolutionu_{H}(0) = \solutionu_{0,H} \eqcomma \nonumber
	\end{align}
	where $\vec{f}_{H}$ is defined such that for an approximation $\vec{g}_{H}$ of $\vec{g}$ we find
	\begin{align*}
	\bilinearform{m_{H}^{\eff}}{\vec{f}_{H}(t)}{\vec{\Phi}_{H}} = \bilinearform{}{\vec{g}_{H}(t)}{\vec{\Phi}_{H}}_{H} \quad \text{for all }\vec{\Phi}_{H} \in \spaceVH \eqcomma
	\end{align*}
	and where $\solutionu_{0,H} \in\spaceVH$ is an approximation of the initial value $\solutionu_{0}\in\spaceVmac$.
	
	The system \cref{eq:bilinear_discrete_effective_Maxwell} is semi-discrete since it involves the analytic effective parameters. To be precise, the information from the microscopic scale is missing. This situation is exactly where the Heterogeneous Multiscale Method (HMM), introduced in \cite{Engquist2003} and exposed in \cite{AbdulleEEngquist2012}, is applicable. 
	\begin{remark}
		Note that the subspace property $\spaceVH\subseteq\spaceVmac$ is one ingredient of a \textit{conforming} finite element method. We point out that this is not restrictive and that it is possible to use a finite dimensional space that is not a subspace of $\spaceVmac$, e.g, a discontinuous Galerkin approach. See also \cite[Remark 4.6]{Hochbruck2018}.
	\end{remark}
	
	\subsection{Approximating the effective parameters on the microscale}\label{sec:Approximating the effective parameters on the microscale}
	With the definition of the discrete bilinear form we get for $\vec{\Phi}_{H}$, $\vec{\Psi}_{H}\in\spaceVH$
	\begin{align*}
	\begin{split}
	\bilinearform{}{\vec{M}^{\eff}\vec{\Phi}_{H}}{\vec{\Psi}_{H}} \approx \bilinearform{m_{H}^{\eff}}{\vec{\Phi}_{H}}{\vec{\Psi}_{H}}
	=\sum\nolimits_{K\in\mcal{T}_{H}} \sum\nolimits_{\quadratureindex = 1}^{Q_{K}} \weight_{K}^{\quadratureindex} \vec{M}^{\eff}(x_{K}^{\quadratureindex}) \vec{\Phi}_{H}(x_{K}^{\quadratureindex}) \cdot \vec{\Psi}_{H}(x_{K}^{\quadratureindex}) \eqdot
	\end{split}
	\end{align*}
	Here we see that the effective parameter has to be known for every macroscopic quadrature point. Thus, the sampling domains in the HMM are always centered around such a point. For convenience, in the rest of this section we abbreviate $\bar{x}=x_{K}^{\quadratureindex}$. 
	\begin{remark}[Knowledge about periodicity]\label{rem:Knowledge about periodicity}
		The definition of the parameters involves the solution of cell problems and unit cells. In this work we make the assumption that we know the period length $\delta$ exactly. In a more general setting, without this knowledge, we would introduce an oversampling parameter. However, the analysis of the resulting modeling errors is the task of future research.
	\end{remark}
	In the view of \cref{rem:Knowledge about periodicity} we introduce a triangulation $\mcal{T}_{h}$ of the sampling domain $\samplingdomain{\delta}{\bar{x}}$ and from now on we drop the dependence on the macroscopic variable. As function space, we choose Lagrange finite elements of degree $k\in\N$ with periodic boundary conditions, denoted as $\spaceVh$. Accordingly, we introduce the discrete counterparts of the cell problems and effective parameters. The discrete corrector $\wlMh(\bar{x},\cdot) \in\spaceVh$ for $\secondindex = 1,\dots,\dimMaxwell$ is the solution of
	\begin{align}
	\int_{\samplingdomainname{\delta}} \vec{M}\left(\bar{x},\frac{y}{\delta}\right) \left(\el + \grady \wlMh(\bar{x} ,y )\right) \cdot \grady v^{h}(y) \D y = 0\quad \text{for all }v^{h}\in\spaceVh \eqdot \label{def:cell_prob_wMh}
	\end{align}
	The HMM parameter $\vec{M}^{\HMM}$ (we indicate HMM quantities by a straight H) is given as
	\begin{align*}
	(\vec{M}^{\HMM}(\bar{x}))_{\firstindex,\secondindex} \coloneqq \fint_{\samplingdomainname{\delta}} \vec{M}\left(\bar{x},\frac{y}{\delta}\right) \left(\el + \nabla_{y} \wlMh(\bar{x},y) \right)\cdot \left(\ek + \nabla_{y} \wkMh(\bar{x},y) \right) \D y \eqdot
	\end{align*}
	Consequently, we define the HMM bilinear form using these discrete quantities as
	\begin{align}\label{def:bilinearform_m_HMM}
	\bilinearform{m^{\HMM}}{\vec{\Phi}_{H}}{\vec{\Psi}_{H}} = \bilinearform{}{\vec{M}^{\HMM} \vec{\Phi}_{H}}{ \vec{\Psi}_{H}}_{H} = \sum\nolimits_{K\in\mcal{T}_{H}}\sum\nolimits_{\quadratureindex = 1}^{Q_{K}} \bar{\weight} \vec{M}^{\HMM}(\bar{x}) \vec{\Phi}_{H}(\bar{x}) \cdot\vec{\Psi}_{H}(\bar{x}) \eqdot
	\end{align}
	For the damping parameter we proceed as before and use the discrete corrector $\wlMh$, $\secondindex = 1,\dots,\dimMaxwell$ solution of \cref{def:cell_prob_wMh} and define the HMM damping parameter as
	\begin{align*}
	\left(\vec{R}^{\HMM}\left(\bar{x}\right)\right)_{\firstindex,\secondindex} = \fint_{\samplingdomainname{\delta}} \vec{R}\left(\bar{x},\frac{y}{\delta}\right)\left(\el + \grady \wlMh\left(\bar{x},y\right) \right)\cdot \left(\ek + \grady \wkMh\left(\bar{x},y\right) \right)\D y \eqdot
	\end{align*}
	Thus, the HMM bilinear form related to \cref{def:bilinearform_r_eff} is given as
	\begin{align}\label{def:bilinearform_r_HMM}
	\bilinearform{r^{\HMM}}{\vec{\Phi}_{H}}{\vec{\Psi}_{H}} \coloneqq \sum\nolimits_{K\in\mcal{T}_{H}}\sum\nolimits_{\quadratureindex = 1}^{Q_{K}} \bar{\weight} \vec{R}^{\HMM}\left(\bar{x}\right) \vec{\Phi}_{H}(\bar{x}) \cdot \vec{\Psi}_{H}(\bar{x}) \eqdot
	\end{align}
	For the effective convolution kernel recall the definition in \cref{def:G_eff_reformulated_transformed}. To define the HMM counterpart of \cref{def:bilinearform_g_eff} we need to introduce the discrete cell problems. Therefore, we introduce the corrector $\wlGh$ as the finite element approximation of $\wlG$, i.e., it solves: Find $\wlGh(\cdot,\bar{x},\cdot):[0,T]\to\spaceVh$ such that
	\begin{align*}
	\int_{\samplingdomainname{\delta}} \left[\vec{M}\left(\bar{x},\frac{y}{\delta}\right) \partial_t \grady\wlGh(t,\bar{x},y) + \vec{R}\left(\bar{x},\frac{y}{\delta}\right) \grady\wlGh(t,\bar{x},y) \right]\cdot \nabla_y v^{h}(y)\D y =0 \eqcomma
	\end{align*}
	and 
	\begin{align*}
	\int_{\samplingdomainname{\delta}} \left[\vec{M}\left(\bar{x},\frac{y}{\delta}\right) \grady \wlGh(0,\bar{x},y) + \vec{R}\left(\bar{x},\frac{y}{\delta}\right) \left(\el + \grady \wlMh(\bar{x},y)\right)\right] \cdot \grady v^{h}(y)\D y = 0 \eqcomma
	\end{align*}
	for all $v^{h}\in\spaceVh$. Eventually, the HMM parameter is defined as
	\begin{align}
	\left(\vec{G}^{\HMM}\left(t,\bar{x}\right)\right)_{\firstindex,\secondindex} = \fint_{\samplingdomainname{\delta}} \vec{R}\left(\bar{x},\frac{y}{\delta}\right) \grady \wlGh\left(t,\bar{x},y\right) \cdot \left(\ek + \grady \wkMh\left(\bar{x},y\right) \right)\D y \eqcomma \label{def:G_HMM_symmetric}
	\end{align}
	and the HMM bilinear form is given as
	\begin{align}\label{def:bilinearform_g_HMM}
	\bilinearform{g^{\HMM}}{t;\vec{\Phi}_{H}}{\vec{\Psi}_{H}} \coloneqq \sum\nolimits_{K\in\mcal{T}_{H}}\sum\nolimits_{\quadratureindex = 1}^{Q_{K}} \bar{\weight} \vec{G}^{\HMM}\left(t,\bar{x}\right) \vec{\Phi}_{H}(\bar{x}) \cdot \vec{\Psi}_{H}(\bar{x}) \eqdot
	\end{align}
	The final expression of the system \cref{eq:bilinear_discrete_effective_Maxwell} that includes an effective parameter is the extra source. Similar to the previous considerations, we define the HMM extra source
	\begin{align}
	\left(\vec{J}^{\HMM}\left(t,\bar{x}\right)\right)_{\firstindex,\secondindex} = \fint_{\samplingdomainname{\delta}} \vec{R}\left(\bar{x},\frac{y}{\delta}\right) \grady \wlNh\left(t,\bar{x},y\right) \cdot \left(\ek + \grady \wkMh\left(\bar{x},y\right) \right)\D y \eqcomma \label{def:J_HMM}
	\end{align}
	where the corrector $\wlNh$ is the discrete approximation of $\wlN$.
	\begin{remark}
		For the implementation, a different formulation of the HMM bilinear forms should be used, see \cite[Section 3.3]{Hochbruck2018}, \cite[Section 5.2.1]{Freese2021} that is suited for parallel assembly of finite element matrices.
	\end{remark}

	\subsubsection{The FE-HMM Maxwell system}
	Combining the results we get the Finite Element Heterogeneous Multiscale Method for the general Maxwell system. We search for the HMM solution $\HMMsolutionu:[0,T]\to \spaceVH$ such that
	\begin{align}
	\begin{split}\label{eq:bilinear_HMM_Maxwell}
	\bilinearform{m^{\HMM}}{\partial_{t}\HMMsolutionu(t)}{\vec{\Phi}_{H}} &+ \bilinearform{r^{\HMM}}{\HMMsolutionu(t)}{\vec{\Phi}_{H}} + \int_{0}^{t}\bilinearform{g^{\HMM}}{t-s;\HMMsolutionu(s)}{\vec{\Phi}_{H}} \D s + \bilinearform{a_{H}}{\HMMsolutionu(t)}{\vec{\Phi}_{H}}
	\\
	&= \bilinearform{m^{\HMM}}{\vec{f}^{\HMM}(t)}{\vec{\Phi}_{H}} - \bilinearform{}{\vec{J}^{\HMM}(t)\solutionu_{0,H}}{\vec{\Phi}_{H}}_{H} \quad \text{for all }\vec{\Phi}_{H}\in\spaceVH \eqcomma
	\end{split}\\
	\HMMsolutionu(0) &= \solutionu_{0,H} \eqcomma
	\end{align}
	where $\vec{f}^{\HMM}$ is defined such that
	\begin{align*}
	\bilinearform{m^{\HMM}}{\vec{f}^{\HMM}(t)}{\vec{\Phi}_{H}} = \bilinearform{}{\vec{g}_{H}(t)}{\vec{\Phi}_{H}}_{H} \quad \text{for all }\vec{\Phi}_{H} \in \spaceVH \eqdot
	\end{align*}
	The bilinear forms are given in \cref{def:bilinearform_m_HMM,def:bilinearform_r_HMM,def:bilinearform_g_HMM,def:bilinearform_a_H}. Note that due to the property $\spaceVH\subseteq\spaceVmac$ the discrete Maxwell operator inherits its properties from the continuous one. 
	\begin{lemma}\label{lem:bounded_HMM_parameters}
		The HMM parameters $\vec{M}^{\HMM}$, $\vec{R}^{\HMM}$ are bounded. In addition, $\vec{M}^{\HMM}$ is positive definite with the same constant $\coercivityconstant$ as $\vec{M}^{\delta}$ and $\vec{R}^{\HMM}$ is positive semi-definite.The time-dependent parameters $\vec{G}^{\HMM}$ and $\vec{J}^{\HMM}$ satisfy $\vec{G}^{\HMM}$, $\vec{J}^{\HMM} \in \Lp{\infty}{0,T;\Lp{\infty}{\computationaldomain;\R^{\dimMaxwell \times \dimMaxwell}}}$.
	\end{lemma}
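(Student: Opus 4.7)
The strategy is to mirror the analysis of \cref{lem:bounded_effective_parameters} at the fully discrete level, exploiting that $\spaceVh \subseteq \spaceVmic$ is a conforming subspace (periodic Lagrange elements). Consequently $\namemicrobilinearform{m}^{\delta}$ remains an inner product on $\spaceVh$ with the equivalence constants of \cref{eq:properties_microbilinearform_m}, and every discrete variational cell problem is well-posed by Lax--Milgram on the closed subspace.

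First I would establish the discrete stability bounds for the correctors. Testing \cref{def:cell_prob_wMh} with $v^{h}=\wlMh$ reproduces verbatim the calculation from the proof of \cref{lem:wellposedness_wM_wG0_wN0}, giving $\norm{\wlMh}{\namemicrobilinearform{m}^{\delta}} \leq \sqrt{\boundednessconstant \seminorm{\samplingdomainname{\delta}}{}}$; the same testing strategy applied to the discrete versions of the initial value problems \cref{def:cell_prob_wG_0_reformulated_transformed,def:cell_prob_wN_0_reformulated_transformed} yields the corresponding bounds on $\wlGh(0)$ and $\wlNh(0)$. For the time-dependent problems I would introduce a discrete operator $\mcal{S}_{h}:\spaceVh\to\spaceVh$ via Riesz representation in the inner product $\namemicrobilinearform{m}^{\delta}$ restricted to $\spaceVh$, in full analogy with \cref{def:op_A}. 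Boundedness of $\mcal{S}_{h}$, dissipativity of $-\mcal{S}_{h}$, and the range condition are inherited directly from the continuous arguments on the closed subspace, so Lumer--Phillips produces a contraction semigroup on $(\spaceVh, \namemicrobilinearform{m}^{\delta})$ and hence $\norm{\wlGh(t)}{\namemicrobilinearform{m}^{\delta}} \leq \norm{\wlGh(0)}{\namemicrobilinearform{m}^{\delta}}$, and analogously for $\wlNh$; the $\Ck{\infty}$-in-time regularity follows from the series representation of the matrix exponential, exactly as in \cref{lem:wellposedness_wG}.

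With these discrete stability bounds the componentwise estimates carried out in the proof of \cref{lem:bounded_effective_parameters} transfer verbatim to $\vec{M}^{\HMM}$, $\vec{R}^{\HMM}$, $\vec{G}^{\HMM}$ and $\vec{J}^{\HMM}$, yielding constants that are uniform in $h$ and in the macroscopic quadrature point $\bar{x}$, and in particular the required membership of $\vec{G}^{\HMM}$ and $\vec{J}^{\HMM}$ in $\Lp{\infty}{0,T;\Lp{\infty}{\computationaldomain;\R^{\dimMaxwell\times\dimMaxwell}}}$. The positive semi-definiteness of $\vec{R}^{\HMM}$ is immediate from its definition since $\vec{R}$ is positive semi-definite.

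The main obstacle I expect is obtaining the coercivity of $\vec{M}^{\HMM}$ with \emph{exactly} the same constant $\coercivityconstant$ as $\vec{M}^{\delta}$, since the discrete correctors $\wkMh$ are not the same as the continuous ones. Here I would follow the classical corrector argument from \cite[Chapter 2.3]{Bensoussan1978}: for $\vec{\xi}\in\R^{\dimMaxwell}$ set $\vec{\zeta}(y) \coloneqq \vec{\xi} + \grady\bigl(\sum_{\secondindex} \xi_{\secondindex}\wMh_{\secondindex}(y)\bigr)$, so that $\vec{\xi}^{T}\vec{M}^{\HMM}(\bar{x})\vec{\xi} = \fint_{\samplingdomainname{\delta}} \vec{M}(\bar{x},y/\delta)\vec{\zeta}\cdot\vec{\zeta}\D y \geq \coercivityconstant \fint_{\samplingdomainname{\delta}} |\vec{\zeta}|^{2}\D y$ by \cref{def:propertiesM}; Jensen's inequality together with $\fint_{\samplingdomainname{\delta}} \grady(\cdot)\D y = 0$ (still valid because $\spaceVh$ enforces exact periodicity) then yields $\fint_{\samplingdomainname{\delta}} |\vec{\zeta}|^{2}\D y \geq |\vec{\xi}|^{2}$, which is the desired lower bound.
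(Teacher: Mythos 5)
Your proposal is correct and follows essentially the same route as the paper, whose proof consists precisely of repeating the argument of \cref{lem:bounded_effective_parameters} with the discrete cell problems; the conformity $\spaceVh\subseteq\spaceVmic$ makes the stability bounds of \cref{lem:wellposedness_wM_wG0_wN0,lem:wellposedness_wG} and the componentwise estimates transfer verbatim, exactly as you describe. Your explicit Jensen-plus-zero-mean argument for the coercivity of $\vec{M}^{\HMM}$ with the constant $\coercivityconstant$ is the standard computation that the paper delegates to the citation of Bensoussan et al.\ in \cref{lem:bounded_effective_parameters}, and it is valid at the discrete level since functions in $\spaceVh$ are exactly periodic.
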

	\begin{proof}
		The proof is as in \cref{lem:bounded_effective_parameters} but uses the discrete cell problems.
	\end{proof}
	Again a direct consequence is the continuity of the HMM bilinear forms as well as the coercivity of $m^{\HMM}$ and the non-negativity of $r^{\HMM}$. The \wellposedness of the FE-HMM system \cref{eq:bilinear_HMM_Maxwell} follows along with the same stability estimate.
	\begin{theorem}\label{thm:wellposedness_HMM}
		Assume that $\delsolutionu,$ $\vec{g}^{\delta}$ and the parameters $\vec{M}^{\delta}$ and $\vec{R}^{\delta}$ satisfy the assumptions from \cref{thm:homogeneous_system}. Then the HMM system \cref{eq:bilinear_HMM_Maxwell} has a unique solution $\HMMsolutionu\in\Wkp{1}{1}{0,T;\spaceVH}$, which satisfies the estimate
		\begin{align}
		\norm{\HMMsolutionu(t)}{\spaceVH} \leq \euler^{\genericconstant_{\vec{G}^{\HMM}}(t)} \left[ \tfrac{1}{\alpha} t \norm{\vec{g}}{\Lp{\infty}{0,t;\spaceVH}} + \left(1+ \tfrac{1}{\alpha} \norm{\vec{J}^{\HMM}}{\Lp{1}{0,t;\Lp{\infty}{\computationaldomain;\R^{\dimMaxwell\times\dimMaxwell}}}}\right) \norm{\solutionu_{0}}{\spaceVH} \right] \eqdot
		\label{eq:stability_Maxwell_integral_discrete}
		\end{align}
		The growth is determined by
		\begin{align}\label{def:C_GH}
		\genericconstant_{\vec{G}^{\HMM}}(t) = \frac{1}{\coercivityconstant} \intt{\norm{\vec{G}^{\HMM}}{\Lp{1}{0,s; \Lp{\infty}{\computationaldomain; \R^{\dimMaxwell\times\dimMaxwell}}}}}{s} \eqdot
		\end{align}
	\end{theorem}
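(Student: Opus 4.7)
The plan is to mirror the argument used for \cref{thm:wellposedness_Maxwell_integral}, exploiting that all properties used there have discrete analogues already recorded in \cref{lem:bounded_HMM_parameters}. First I would note that, since $\spaceVH$ is finite-dimensional and $m^{\HMM}$ is coercive (with constant $\alpha$), the bilinear form $m^{\HMM}$ induces an invertible mass matrix. Hence the semi-discrete system \cref{eq:bilinear_HMM_Maxwell} can be recast as a linear Volterra integro-differential equation in $\R^{\dim \spaceVH}$ of the form $\partial_{t}\vec{U}(t) = \vec{F}(t,\vec{U}(t)) + \int_{0}^{t}\vec{K}(t-s)\vec{U}(s)\D s$, where the kernels and forcing depend smoothly on $t$ thanks to the regularity of $\vec{G}^{\HMM}$ and $\vec{J}^{\HMM}$ from \cref{lem:bounded_HMM_parameters}. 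A standard Picard/contraction argument on compact time intervals then yields existence of a unique solution $\HMMsolutionu \in \Wkp{1}{1}{0,T;\spaceVH}$.

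Second, for the stability bound \cref{eq:stability_Maxwell_integral_discrete}, I would test \cref{eq:bilinear_HMM_Maxwell} with $\vec{\Phi}_{H} = \HMMsolutionu(t)$. The crucial point is that $\spaceVH \subseteq \spaceVmac$ implies $\bilinearform{a_{H}}{\HMMsolutionu(t)}{\HMMsolutionu(t)} = \bilinearform{a}{\HMMsolutionu(t)}{\HMMsolutionu(t)} = 0$ by skew-adjointness of the Maxwell operator $\vec{A}$. Combined with the coercivity of $m^{\HMM}$ and the non-negativity $\bilinearform{r^{\HMM}}{\HMMsolutionu(t)}{\HMMsolutionu(t)} \geq 0$, this gives
\begin{align*}
\alpha \left(\tfrac{d}{dt}\norm{\HMMsolutionu(t)}{\spaceVH}\right) \norm{\HMMsolutionu(t)}{\spaceVH} \leq \tfrac{1}{2}\tfrac{d}{dt} \bilinearform{m^{\HMM}}{\HMMsolutionu(t)}{\HMMsolutionu(t)} \leq \text{RHS terms},
\end{align*}
exactly analogous to the first display in the proof of \cref{thm:wellposedness_Maxwell_integral}.

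Third, I would dominate the three RHS contributions by Cauchy--Schwarz and H\"older in the discrete inner product $\bilinearform{}{\cdot}{\cdot}_{H}$, divide by $\norm{\HMMsolutionu(t)}{\spaceVH}$, integrate on $[0,t]$, and take the supremum. This produces a Gronwall-type inequality
\begin{align*}
\norm{\HMMsolutionu}{\Lp{\infty}{0,t;\spaceVH}} \leq \widetilde{C}_{0}(t) + \intt{\widetilde{C}_{1}(s)\norm{\HMMsolutionu}{\Lp{\infty}{0,s;\spaceVH}}}{s},
\end{align*}
with $\widetilde{C}_{0}(t)$ and $\widetilde{C}_{1}(t)$ being the discrete counterparts of the constants appearing in \cref{thm:wellposedness_Maxwell_integral}; in particular $\widetilde{C}_{1}(t) = \tfrac{1}{\alpha}\norm{\vec{G}^{\HMM}}{\Lp{1}{0,t;\Lp{\infty}{\computationaldomain;\R^{\dimMaxwell \times \dimMaxwell}}}}$. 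An application of Gronwall's lemma then yields \cref{eq:stability_Maxwell_integral_discrete} with the prefactor $\exp(\genericconstant_{\vec{G}^{\HMM}}(t))$ as defined in \cref{def:C_GH}.

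I do not expect a genuine obstacle: the whole argument is a verbatim transcription of the proof of \cref{thm:wellposedness_Maxwell_integral}, with the discrete bilinear forms replacing their continuous counterparts and the only non-trivial input being the skew-adjointness of $a_{H}$ on $\spaceVH$, which follows from conformity $\spaceVH\subseteq \spaceVmac$. Thus the main bookkeeping is simply to track that all constants $\alpha$, $\genericconstant_{\vec{R}}$, $\boundednessconstant$ that enter the RHS estimate are the same as in the continuous setting, thanks to \cref{lem:bounded_HMM_parameters}.
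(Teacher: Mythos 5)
Your proposal takes essentially the same route as the paper: the paper also obtains unique solvability by reformulating \cref{eq:bilinear_HMM_Maxwell} as a finite-dimensional Volterra-type system (it cites \cite[Theorem 5.3.11]{Freese2021} together with \cite[Lemma 1.1]{Bossavit2005} instead of spelling out a Picard iteration), and it derives \cref{eq:stability_Maxwell_integral_discrete} by repeating the continuous argument of \cref{thm:wellposedness_Maxwell_integral} with the discrete forms. One small point of precision: since $a_{H}$ is defined through the quadrature inner product $\bilinearform{}{\cdot}{\cdot}_{H}$ rather than by restriction of $a$, the identity $\bilinearform{a_{H}}{\vec{\Phi}_{H}}{\vec{\Phi}_{H}}=0$ needs the quadrature exactness \cref{ass:quadrature} in addition to the conformity $\spaceVH\subseteq\spaceVmac$ you invoke, which is exactly why the paper's proof cites both.
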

	\begin{proof}
		The proof of unique solvability relies on a reformulation as in \cite[Theorem 5.3.11]{Freese2021} and an application of \cite[Lemma 1.1]{Bossavit2005}. The estimate \cref{eq:stability_Maxwell_integral_discrete} follows thanks to $\spaceVH\subseteq \spaceVmac$ and \cref{ass:quadrature} using the same techniques as in the continuous setting in \cref{eq:stability_Maxwell_integral}.
	\end{proof}

	\section{Semi-discrete error analysis}\label{sec:Semi-discrete error analysis}
	Eventually, we want to estimate the error between the effective solution $\effsolutionu$ of \cref{eq:bilinear_effective_Maxwell} and its HMM approximation $\HMMsolutionu$, solution of \cref{eq:bilinear_HMM_Maxwell}. For that purpose we need to analyze the micro errors, that are the errors between the effective and the HMM parameters. The next result from \cite[Corollary 5.3]{Abdulle2012} or (for $k=1$) \cite[Lemma 3.3]{Abdulle2005} bounds the micro error for the parameter $\vec{M}^{\delta}$.
	
	\begin{lemma}\label{lem:micro_error_M}
		Assume that for $k\in\N$ and every quadrature point $x_{K}^{\quadratureindex}$ we have $\wlM(x_{K}^{\quadratureindex},\cdot) \in \Hk{k+1}{\samplingdomainname{\delta}}$ such that for every $\secondindex =  1,\dots,\dimMaxwell$ and every quadrature point $x_{K}^{\quadratureindex}$ it holds
		\begin{align*}
		\seminorm{\wlM(x_{K}^{\quadratureindex}, \cdot)}{\Hk{k+1}{\samplingdomainname{\delta}}} \leq C \delta^{-k} \sqrt{\seminorm{\samplingdomainname{\delta}}{}} \eqdot
		\end{align*}
		Then, we have the following estimate
		\begin{align*}
		\sup_{K\in\mcal{T}_{H}, \quadratureindex \in\left\lbrace 1,\dots, Q_{K}\right\rbrace} \norm{\vec{M}^{\eff}(x_{K}^{\quadratureindex}) - \vec{M}^{\HMM}(x_{K}^{\quadratureindex})}{F} \leq C \left(\frac{h}{\delta}\right)^{2 k} \eqdot
		\end{align*}	
	\end{lemma}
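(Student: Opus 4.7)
The plan is to follow the classical FE--HMM argument for symmetric cell problems, exploiting that the $\namemicrobilinearform{m}^{\delta}$-inner product is symmetric so that the micro error enters the parameter defect quadratically (Aubin--Nitsche type squaring). The only ingredients I need are the cell problems \cref{def:cell_prob_wM_reformulated_transformed} and their discrete analogues \cref{def:cell_prob_wMh}, C\'ea's lemma in $\spaceVmic$, the Lagrange interpolation estimate from \cref{thm:Lagrange_interpolation}, and the regularity assumption on $\wlM(x_{K}^{\quadratureindex},\cdot)$.

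First I would rewrite the diagonal-like integrand of $\vec{M}^{\eff}$ using the cell-problem Galerkin orthogonality. Testing \cref{def:cell_prob_wM_reformulated_transformed} for index $\secondindex$ with $v=\wkM$, and using that $\vec{M}$ is symmetric, one obtains
\begin{align*}
(\vec{M}^{\eff}(\bar x))_{\firstindex,\secondindex}\,\seminorm{\samplingdomainname{\delta}}{} = \int_{\samplingdomain{\delta}{\bar x}} \vec{M}\!\left(\bar x,\tfrac{y}{\delta}\right)\ek\cdot\el \D y - \int_{\samplingdomain{\delta}{\bar x}} \vec{M}\!\left(\bar x,\tfrac{y}{\delta}\right)\grady\wkM\cdot\grady\wlM \D y,
\end{align*}
and exactly the same identity for $\vec{M}^{\HMM}$ with $\wlM,\wkM$ replaced by $\wlMh,\wkMh$, since the discrete Galerkin orthogonality on $\spaceVh$ applies when testing with $v^h=\wkMh$. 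Subtracting, the $\ek\cdot\el$ term cancels and we obtain
\begin{align*}
(\vec{M}^{\HMM}-\vec{M}^{\eff})_{\firstindex,\secondindex}\,\seminorm{\samplingdomainname{\delta}}{} = \microkappabilinearform{m}{\wkM}{\wlM} - \microkappabilinearform{m}{\wkMh}{\wlMh}.
\end{align*}

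Next I would introduce the micro errors $e^{\secondindex}=\wlM-\wlMh$ and expand the right-hand side bilinearly. Since $\wkMh,\wlMh\in\spaceVh$ and the Galerkin orthogonality reads $\microkappabilinearform{m}{e^{\secondindex}}{v^h}=0$ for all $v^h\in\spaceVh$, the two mixed cross terms $\microkappabilinearform{m}{\wkMh}{e^{\secondindex}}$ and $\microkappabilinearform{m}{e^{\firstindex}}{\wlMh}$ vanish (the second using symmetry of $\vec{M}$). What remains is the clean quadratic identity
\begin{align*}
(\vec{M}^{\HMM}-\vec{M}^{\eff})_{\firstindex,\secondindex}\,\seminorm{\samplingdomainname{\delta}}{} = \microkappabilinearform{m}{e^{\firstindex}}{e^{\secondindex}}.
\end{align*}
Applying Cauchy--Schwarz in $\namemicrobilinearform{m}^{\delta}$ and the equivalence \cref{eq:properties_microbilinearform_m}, this is bounded by $\boundednessconstant\norm{\grady e^{\firstindex}}{\Lp{2}{\samplingdomainname{\delta}}}\norm{\grady e^{\secondindex}}{\Lp{2}{\samplingdomainname{\delta}}}$.

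Finally, C\'ea's lemma for the coercive, symmetric problem \cref{def:cell_prob_wM_reformulated_transformed} together with the Lagrange interpolation estimate \cref{def:Lagrange_interpolation} gives
\begin{align*}
\norm{\grady e^{\secondindex}}{\Lp{2}{\samplingdomainname{\delta}}} \leq \genericconstant\, h^{k} \seminorm{\wlM(\bar x,\cdot)}{\Hk{k+1}{\samplingdomainname{\delta}}} \leq \genericconstant\, h^{k}\delta^{-k}\sqrt{\seminorm{\samplingdomainname{\delta}}{}},
\end{align*}
using the assumed regularity bound. Multiplying the two error factors and dividing by $\seminorm{\samplingdomainname{\delta}}{}$ cancels the volume exactly and yields the componentwise bound $\seminorm{(\vec{M}^{\HMM}-\vec{M}^{\eff})_{\firstindex,\secondindex}}{}\leq \genericconstant(h/\delta)^{2k}$. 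Taking the supremum over $K$ and $\quadratureindex$ and summing over the finitely many indices $\firstindex,\secondindex$ in the Frobenius norm concludes the proof. The only subtle step is the algebraic manipulation that produces the squared structure via the two Galerkin orthogonalities; everything else is routine.
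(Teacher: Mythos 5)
Your proposal is correct and coincides in essence with the paper's proof: the paper gives no in-text argument for this lemma but defers to \cite[Corollary 5.3]{Abdulle2012} and \cite[Lemma 3.3]{Abdulle2005}, whose proof is precisely this Galerkin-orthogonality squaring of the micro error combined with C\'ea's lemma and the interpolation estimate of \cref{thm:Lagrange_interpolation}. Two immaterial bookkeeping slips: deriving the symmetric identity for $(\vec{M}^{\eff})_{\firstindex,\secondindex}$ also requires testing the cell problem for index $\firstindex$ with $v=\wlM$, and it is the cross term $\microkappabilinearform{m}{\wkMh}{\wlM-\wlMh}$ rather than $\microkappabilinearform{m}{\wkM-\wkMh}{\wlMh}$ that needs the symmetry of $\vec{M}$ (the latter vanishes directly by Galerkin orthogonality).
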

	A main contribution of this work is to establish similar results for the other parameters arising in the general Maxwell setting.
	\begin{lemma}\label{lem:micro_error_R}
		Assume for $k\in\N$ and for every quadrature point $x_{K}^{\quadratureindex}$ that $\wlM(x_{K}^{\quadratureindex},\cdot)$, $\wlG(0,x_{K}^{\quadratureindex},\cdot) \in\Hk{k+1}{\samplingdomainname{\delta}}$ such that for every $\secondindex =  1,\dots,\dimMaxwell$ and every quadrature point $x_{K}^{\quadratureindex}$ it holds
		\begin{align}\label{eq:regularity_k_assumption_R_micro}
		\seminorm{\wlM(x_{K}^{\quadratureindex},\cdot)}{\Hk{k+1}{\samplingdomainname{\delta}}} , \seminorm{\wlG(0,x_{K}^{\quadratureindex},\cdot)}{\Hk{k+1}{\samplingdomainname{\delta}}} \leq C \delta^{-k} \sqrt{\seminorm{\samplingdomainname{\delta}}{}} \eqdot
		\end{align}
		Then there exists a constant $C>0$ independent of $h$ and $\delta$ such that we have a bound on the Frobenius norm
		\begin{align*}
		\sup_{K\in\mcal{T}_{H}, \quadratureindex \in\left\lbrace 1,\dots, Q_{K}\right\rbrace} \norm{\vec{R}^{\eff}(x_{K}^{\quadratureindex}) - \vec{R}^{\HMM}(x_{K}^{\quadratureindex})}{F} \leq C \left(\frac{h}{\delta}\right)^{2k} \eqdot
		\end{align*}	
	\end{lemma}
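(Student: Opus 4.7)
The plan is to mimic the Galerkin-orthogonality argument underlying \cref{lem:micro_error_M}. The difficulty is that the cell problems \cref{def:cell_prob_wM_reformulated_transformed} and \cref{def:cell_prob_wMh} for $\wlM, \wlMh$ are driven by $\vec{M}$, not by $\vec{R}$, so the $\vec{R}$-weighted error terms do not directly benefit from the Galerkin orthogonality $\microkappabilinearform{m}{e_{\secondindex}^{M}}{v^h}=0$ (which holds for $e_{\secondindex}^{M}\coloneqq\wlM-\wlMh$ against every $v^h\in\spaceVh$). The bridge will be the $\wlG(0)$-equation \cref{def:cell_prob_wG_0_reformulated_transformed} which, tested against any $v\in\spaceVmic$, gives the dual identity $\int_{\samplingdomainname{\delta}}\vec{R}(\el+\grady\wlM)\cdot\grady v\D y=-\microkappabilinearform{m}{\wlG(0)}{v}$, thereby converting an $\vec{R}$-weighted integral into an $\vec{M}$-weighted one, where orthogonality is available.

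Fix a quadrature point and drop it from the notation. Starting from \cref{def:R_eff_reformulated_transformed} and its HMM analogue, bilinearity yields
\begin{align*}
(\vec{R}^{\eff})_{\firstindex,\secondindex}-(\vec{R}^{\HMM})_{\firstindex,\secondindex} = \fint_{\samplingdomainname{\delta}}\vec{R}\grady e_{\secondindex}^{M}\cdot(\ek+\grady\wkM)\D y + \fint_{\samplingdomainname{\delta}}\vec{R}(\el+\grady\wlMh)\cdot\grady e_{\firstindex}^{M}\D y.
\end{align*}
To the first integral I apply symmetry of $\vec{R}$ together with the dual identity at index $\firstindex$ and test $v=e_{\secondindex}^{M}$, producing $-\fint \vec{M}\grady\wkG(0)\cdot\grady e_{\secondindex}^{M}\D y$. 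In the second integral I split $\el+\grady\wlMh=(\el+\grady\wlM)-\grady e_{\secondindex}^{M}$ and apply the dual identity at index $\secondindex$ with test $v=e_{\firstindex}^{M}$ to the first piece, leaving $-\fint\vec{R}\grady e_{\secondindex}^{M}\cdot\grady e_{\firstindex}^{M}\D y$ as a remainder.

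For the two mixed $\vec{M}$-weighted terms, the Galerkin orthogonality for $e^{M}$ (obtained by subtracting \cref{def:cell_prob_wMh} from \cref{def:cell_prob_wM_reformulated_transformed}) lets me subtract the Lagrange interpolant of \cref{thm:Lagrange_interpolation} from the $\wG(0)$-factor without changing the value. Cauchy--Schwarz, the interpolation bound \cref{def:Lagrange_interpolation} applied to $\wG(0)$, the Céa estimate $\norm{\grady e_{\secondindex}^{M}}{\Lp{2}{\samplingdomainname{\delta}}}\leq Ch^{k}\seminorm{\wlM}{\Hk{k+1}{\samplingdomainname{\delta}}}$ for $e^{M}$, and the $\Hk{k+1}{}$-regularity hypothesis \cref{eq:regularity_k_assumption_R_micro} then each produce two factors of $h^{k}\delta^{-k}\sqrt{\seminorm{\samplingdomainname{\delta}}{}}$, for a total bound of order $Ch^{2k}\delta^{-2k}\seminorm{\samplingdomainname{\delta}}{}$ per term. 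The pure error remainder is bounded by the same quantity using boundedness of $\vec{R}$ and two applications of Céa. Dividing by $\seminorm{\samplingdomainname{\delta}}{}$ coming from $\fint$ and taking the supremum over all quadrature points, the desired $(h/\delta)^{2k}$ rate follows.

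The principal obstacle is exactly this absence of a natural Galerkin orthogonality in the $\vec{R}$-inner product: a plain Cauchy--Schwarz on the first bilinear splitting only yields a single factor $(h/\delta)^{k}$. The second factor emerges only after \cref{def:cell_prob_wG_0_reformulated_transformed} has been invoked as a dual identity, and this is the structural reason why \cref{eq:regularity_k_assumption_R_micro} must control $\wlG(0)$ in addition to $\wlM$.
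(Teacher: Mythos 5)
Your argument is sound and, at its core, coincides with the paper's proof. After your two applications of the dual identity coming from \cref{def:cell_prob_wG_0_reformulated_transformed}, you arrive at exactly the three-term representation
\begin{align*}
-\fint_{\samplingdomainname{\delta}}\vec{M}\grady\wkG(0)\cdot\grady e_{\secondindex}^{M}\D y
-\fint_{\samplingdomainname{\delta}}\vec{M}\grady\wlG(0)\cdot\grady e_{\firstindex}^{M}\D y
-\fint_{\samplingdomainname{\delta}}\vec{R}\,\grady e_{\secondindex}^{M}\cdot\grady e_{\firstindex}^{M}\D y \eqcomma
\end{align*}
with $e_{\secondindex}^{M}=\wlM-\wlMh$ and $e_{\firstindex}^{M}=\wkM-\wkMh$, which is precisely what the paper obtains by adding and subtracting $\microkappabilinearform{r}{\varphi_{\secondindex}^{h}}{\varphi_{\firstindex}}$ and testing the $\wG(0)$-problems with the corrector errors. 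The one place where you genuinely diverge is the treatment of the two mixed $\vec{M}$-weighted terms: the paper inserts the \emph{discrete} correctors $\wkGh(0)$, $\wlGh(0)$ via the Galerkin orthogonality of \cref{def:cell_prob_wMh} and is then left with the FEM errors $\norm{\wG(0)-\wGh(0)}{\spaceVmic}$, bounded through the cited Ciarlet estimate; you instead exploit the orthogonality of $e^{M}$ against $\spaceVh$ (valid in either slot since $\vec{M}$ is symmetric) to replace the $\wG(0)$-factor by $\wG(0)-\Pi_{h}\wG(0)$ and then only need the interpolation bound \cref{def:Lagrange_interpolation}. Your variant is, if anything, slightly cleaner: it avoids estimating $\wG(0)-\wGh(0)$ altogether, whose discrete problem carries a perturbed load (it is driven by $\el+\grady\wlMh$ rather than $\el+\grady\wlM$), a point the paper's appeal to the standard finite element estimate leaves implicit.

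One concrete caveat: your reduction of the first integral invokes symmetry of $\vec{R}$, which the paper never assumes --- the damping parameter is only required to be bounded and positive semi-definite, and its block structure with distinct couplings $\vec{R}^{\delta}_{\mbb{P}\mrm{E}}$ and $\vec{R}^{\delta}_{\mrm{E}\mbb{P}}$ is non-symmetric in general. The paper circumvents this by using the \emph{transposed} cell problem for $\wkG(0)$, i.e., the corrector whose initial-value equation is driven by $\vec{R}^{T}\left(\ek+\grady\wkM\right)$, which converts $\fint_{\samplingdomainname{\delta}}\vec{R}\,\grady e_{\secondindex}^{M}\cdot\left(\ek+\grady\wkM\right)\D y$ into an $\vec{M}$-weighted integral without any symmetry hypothesis; the second integral in your splitting uses the untransposed problem and needs no such assumption, just as in your write-up. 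Your proof is repaired verbatim by this substitution, with the understanding that the regularity assumption \cref{eq:regularity_k_assumption_R_micro} must then also cover the transposed corrector (as the paper's own proof implicitly requires).
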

	\begin{proof}
		For $\firstindex = 1,\dots,\dimMicro$ we introduce the short notation 
		\begin{align}\label{def:short_notation_micro_lemma}
		\grady \varphi_{\firstindex}^{}(x,y)\coloneqq \left(\ek + \grady \wkM(x,y)\right)\eqcomma \quad \grady \varphi_{\firstindex}^{h}(x,y)\coloneqq \left(\ek + \grady \wkMh(x,y)\right) \eqdot
		\end{align}
		Note that $\varphi_{\firstindex}^{h} - \varphi_{\firstindex}^{}\in\spaceVmic$. We investigate the difference, suppressing the $x_{K}^{\quadratureindex}$ variable. First, add and subtract $\microkappabilinearform{r}{\varphi_{\secondindex}^{h}}{\varphi_{\firstindex}^{}}$. Next, use the transposed cell problem for $\wkG(0)$, cf. \cref{def:cell_prob_wG_0_reformulated_transformed}, tested with $\varphi_{\secondindex}^{} - \varphi_{\secondindex}^{h}$ and the cell problem for $\wlG(0)$ tested with $\varphi_{\firstindex}^{h} - \varphi_{\firstindex}^{}$, i.e,
		\begin{align*}
		\microkappabilinearform{m}{\varphi_{\secondindex}^{}- \varphi_{\secondindex}^{h} }{\wkG(0)} + \microkappabilinearform{r}{\varphi_{\secondindex}^{} - \varphi_{\secondindex}^{h}}{\varphi_{\firstindex}^{}} &= 0 \eqcomma \\ 
		\microkappabilinearform{m}{\wlG(0)}{\varphi_{\firstindex}^{h} - \varphi_{\firstindex}^{}} + \microkappabilinearform{r}{\varphi_{\secondindex}^{}}{\varphi_{\firstindex}^{h} - \varphi_{\firstindex}^{}} &= 0 \eqdot
		\end{align*}
		This yields
		\begin{align*}
		&\left\lvert (\vec{R}^{\eff})_{\firstindex,\secondindex} - (\vec{R}^{\HMM})_{\firstindex,\secondindex} \right\rvert 
		\\&\quad= \frac{1}{\seminorm{\samplingdomainname{\delta}}{}} \Bigl| \microkappabilinearform{r}{\varphi_{\secondindex}^{h} -\varphi_{\secondindex}^{} }{\varphi_{\firstindex}^{} - \varphi_{\firstindex}^{h}} + \microkappabilinearform{m}{\wlG(0)}{\varphi_{\firstindex}^{h} - \varphi_{\firstindex}^{}} - \microkappabilinearform{m}{\varphi_{\secondindex}^{}- \varphi_{\secondindex}^{h} }{\wkG(0)} \Bigr| \eqdot
		\end{align*}
		Finally, we use the standard cell problems \cref{def:cell_prob_wM_reformulated_transformed} and \cref{def:cell_prob_wMh} for $\wlM,$ $\wlMh$ and the transposed ones for $\wkM$, $\wkMh$ tested with $\wkGh(0)$ and $\wlGh(0)$ respectively and add the resulting zeros, i.e.
		\begin{align*}
		\microkappabilinearform{m}{\varphi_{\secondindex}^{}- \varphi_{\secondindex}^{h}}{\wkGh(0)} = 0 \eqcomma \quad
		\microkappabilinearform{m}{\wlGh(0)}{\varphi_{\firstindex}^{h} - \varphi_{\firstindex}^{}} = 0 \eqdot
		\end{align*}
		This leads to
		\begin{align*}
		&\left\lvert (\vec{R}^{\eff})_{\firstindex,\secondindex} - (\vec{R}^{\HMM})_{\firstindex,\secondindex} \right\rvert 
		= \frac{1}{\seminorm{\samplingdomainname{\delta}}{}} \Bigl| \microkappabilinearform{r}{\wlMh - \wlM }{\wkM - \wkMh} \\
		&\qquad+ \microkappabilinearform{m}{\wlG(0) - \wlGh(0)}{\wkMh - \wkM} + \microkappabilinearform{m}{\wlM - \wlMh }{\wkGh(0) - \wkG(0)} \Bigr| \eqdot
		\end{align*}
		An application of the Cauchy-Schwarz inequality together with \cref{def:propertyR} yields
		\begin{align*}
		\left\lvert (\vec{R}^{\eff})_{\firstindex,\secondindex} - (\vec{R}^{\HMM})_{\firstindex,\secondindex} \right\rvert
		\leq C \frac{1}{\seminorm{\samplingdomainname{\delta}}{}} \norm{\wlG(0) - \wlGh(0)}{\spaceVmic} \norm{\wkMh - \wkM}{\spaceVmic} \\
		+ C \frac{1}{\seminorm{\samplingdomainname{\delta}}{}} \norm{\wlMh - \wlM}{\spaceVmic} \left(\norm{\wkM - \wkMh}{\spaceVmic} + \norm{\wkGh(0) - \wkG(0)}{\spaceVmic} \right)
		\end{align*}
		The remaining $\mrm{H}^{1}$-errors are all estimated using \cite[Theorem 3.2.2]{Ciarlet2002}. With the assumption on the regularity of the cell correctors we end up with the asserted estimate.
	\end{proof}

	\subsection{Error estimates for the Sobolev equation and the time-dependent parameter}\label{sec:Error estimates for the Sobolev equation and the time-dependent parameter}
	For the analysis of the micro error of the time-dependent parameters we need an error estimate for the Sobolev equation. The following result is similar to \cite[Theorem 3.2]{Liu2002} but uses a different technique. Therefore, we are able to avoid the application of Gronwall's lemma, which prevents an exponential growth in time.
	\begin{theorem}\label{thm:error_Sobolev_equation}
		Let $\wGh:[0,T]\to\spaceVh$ be solution of 
		\begin{align*}
		\discretemicrobilinearform{m}{\partial_{t}\wGh(t)}{v^{h}} + \discretemicrobilinearform{r}{\wGh(t)}{v^{h}} = 0 \quad \text{for all }v^{h}\in\spaceVh \eqdot \label{eq:Sobolev_equation_variational_homogeneous_discrete}
		\end{align*} 
		Assume that $\wG$, solution of \cref{eq:Sobolev_equation_variational}, satisfies $\wG\in\Ck{1}{0,T;\Hk{k+1}{\samplingdomainname{\delta}}}$. Then there exists a constant $C>0$ independent of $h$ and $t$ such that
		\begin{align*}
		\norm{\wG(t) - \wGh(t)}{\namemicrobilinearform{m}} &\leq C h^{k} \left[ \seminorm{\wG(0)}{\Hk{k+1}{\samplingdomainname{\delta}}} + \seminorm{\wG(t)}{\Hk{k+1}{\samplingdomainname{\delta}}} + \int_{0}^{t}{\seminorm{\wG(s)}{\Hk{k+1}{\samplingdomainname{\delta}}}} \D s\right] \eqdot
		\end{align*}
	\end{theorem}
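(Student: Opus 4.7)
The plan is to decompose the error by a Ritz-type projection and then exploit the contraction semigroup structure from \cref{lem:wellposedness_wG} at the discrete level, so that the consistency error produced by the projection can be integrated via Duhamel's formula rather than controlled by Gronwall's lemma. This is precisely what bypasses the exponential-in-time growth.

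Concretely, I would introduce the elliptic projection $P^h:\spaceVmic\to\spaceVh$ with respect to $\namemicrobilinearform{m}^{\delta}$, characterised by $\microkappabilinearform{m}{P^h\wG - \wG}{v^h} = 0$ for every $v^h\in\spaceVh$, and split the error as $\wG - \wGh = \rho + \theta$ with $\rho = \wG - P^h\wG$ and $\theta = P^h\wG - \wGh\in\spaceVh$. C\'ea's lemma together with \cref{thm:Lagrange_interpolation} and the norm equivalence \cref{eq:properties_microbilinearform_m} give $\norm{\rho(t)}{\namemicrobilinearform{m}^{\delta}} \leq \genericconstant h^k\seminorm{\wG(t)}{\Hk{k+1}{\samplingdomainname{\delta}}}$. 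Subtracting the discrete Sobolev equation from the continuous one tested against $v^h\in\spaceVh$ and using $\microkappabilinearform{m}{\partial_t\rho}{v^h} = 0$ (which follows by differentiating the defining identity of $P^h$ in time), the error equation for $\theta$ reduces to
\begin{align*}
\microkappabilinearform{m}{\partial_t\theta(t)}{v^h} + \microkappabilinearform{r}{\theta(t)}{v^h} = -\microkappabilinearform{r}{\rho(t)}{v^h} \quad \text{for all }v^h\in\spaceVh\eqdot
\end{align*}

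Defining the discrete operator $\mcal{S}^h:\spaceVh\to\spaceVh$ via $\microkappabilinearform{m}{\mcal{S}^h\phi^h}{v^h} = \microkappabilinearform{r}{\phi^h}{v^h}$ and the forcing $\Phi(t)\in\spaceVh$ via $\microkappabilinearform{m}{\Phi(t)}{v^h} = \microkappabilinearform{r}{\rho(t)}{v^h}$, the equation becomes the abstract ODE $\partial_t\theta + \mcal{S}^h\theta = -\Phi$ on the finite-dimensional space $\spaceVh$. The Lumer--Phillips argument from \cref{lem:wellposedness_wG} transfers verbatim to $\spaceVh$ since $\namemicrobilinearform{r}^{\delta}$ remains bounded and positive semi-definite on the subspace, hence $\euler^{-\mcal{S}^h t}$ is a contraction in $\norm{\cdot}{\namemicrobilinearform{m}^{\delta}}$. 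Duhamel's formula $\theta(t) = \euler^{-\mcal{S}^h t}\theta(0) - \int_0^t\euler^{-\mcal{S}^h(t-s)}\Phi(s)\D s$, combined with this contraction and the pointwise bound $\norm{\Phi(s)}{\namemicrobilinearform{m}^{\delta}} \leq \genericconstant\norm{\rho(s)}{\spaceVmic} \leq \genericconstant h^k \seminorm{\wG(s)}{\Hk{k+1}{\samplingdomainname{\delta}}}$, then yields
\begin{align*}
\norm{\theta(t)}{\namemicrobilinearform{m}^{\delta}} \leq \norm{\theta(0)}{\namemicrobilinearform{m}^{\delta}} + \genericconstant h^k\int_0^t\seminorm{\wG(s)}{\Hk{k+1}{\samplingdomainname{\delta}}}\D s\eqcomma
\end{align*}
with no Gronwall factor. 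The initial term $\norm{\theta(0)}{\namemicrobilinearform{m}^{\delta}}$ is absorbed into $\genericconstant h^k\seminorm{\wG(0)}{\Hk{k+1}{\samplingdomainname{\delta}}}$ via the triangle inequality and a standard interpolation bound on the initial approximation, and the final triangle inequality $\norm{\wG - \wGh}{\namemicrobilinearform{m}^{\delta}} \leq \norm{\rho(t)}{\namemicrobilinearform{m}^{\delta}} + \norm{\theta(t)}{\namemicrobilinearform{m}^{\delta}}$ closes the estimate.

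The main technical point is the contraction of $\euler^{-\mcal{S}^h t}$ in the $\namemicrobilinearform{m}^{\delta}$-norm on $\spaceVh$, but this is essentially automatic since $\mcal{S}^h$ inherits dissipativity and boundedness from $\mcal{S}$ restricted to the subspace. A naive energy estimate testing the error equation with $v^h = \theta$ would instead produce a $\microkappabilinearform{r}{\theta}{\theta}$ term lacking useful coercivity (only semi-definiteness) and would force a Gronwall argument with exponential-in-$t$ growth; replacing that energy estimate by Duhamel together with the contractive discrete semigroup is exactly what removes this growth and leads to the clean integral bound in the statement.
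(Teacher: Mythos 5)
Your proposal is correct and takes essentially the same route as the paper, whose proof (deferred to \cite[Theorem 5.3.6]{Freese2021} and following \cite{Hipp2017}) rests on the same mechanism: an elliptic-projection splitting with respect to $\namemicrobilinearform{m}^{\delta}$ combined with contractivity of the discrete semigroup $\euler^{-\mcal{S}^{h}t}$ in the $\norm{\cdot}{\namemicrobilinearform{m}^{\delta}}$-norm and the variation-of-constants formula, which is precisely what avoids Gronwall's lemma and the exponential growth present in \cite{Liu2002}. Two small caveats: your bound on $\theta(0)$ tacitly requires $\wGh(0)$ to be an $\mcal{O}(h^{k})$ approximation of $\wG(0)$ in $\spaceVmic$ (which the discrete initial-value cell problem delivers via a C\'ea/Strang argument, and which the $\seminorm{\wG(0)}{\Hk{k+1}{\samplingdomainname{\delta}}}$ term in the statement is there to absorb), and your closing remark is slightly off --- with your splitting, testing the $\theta$-equation with $v^{h}=\theta$ and simply dropping $\microkappabilinearform{r}{\theta}{\theta}\geq 0$ already gives $\frac{d}{dt}\norm{\theta(t)}{\namemicrobilinearform{m}^{\delta}}\leq \norm{\Phi(t)}{\namemicrobilinearform{m}^{\delta}}$ and hence the same Gronwall-free bound, so the exponential growth in \cite{Liu2002} stems from a different splitting rather than from energy testing per se, and Duhamel is convenient but not essential here.
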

	\begin{proof}
		The proof is found in \cite[Theorem 5.3.6]{Freese2021} and follows ideas of \cite{Hipp2017}.
	\end{proof}
	With this crucial semi-discrete error estimate we proceed with the analysis of the micro error of the convolution kernel and extra source. 
	\begin{lemma}\label{lem:micro_error_G_first_order}
		Assume that for $k\in\N$ and for every quadrature point $x_{K}^{\quadratureindex}$ we have $\wlM(x_{K}^{\quadratureindex},\cdot) \in \Hk{k+1}{\samplingdomainname{\delta}}$ and $\wlG(t,x_{K}^{\quadratureindex},\cdot) \in \Hk{k+1}{\samplingdomainname{\delta}}$ for all $t\in[0,T]$ such that for every $\secondindex =  1,\dots,\dimMaxwell$ it holds
		\begin{align*}
		\seminorm{\wlM(x_{K}^{\quadratureindex},\cdot)}{\Hk{k+1}{\samplingdomainname{\delta}}} \leq \genericconstant \delta^{-k} \sqrt{\seminorm{\samplingdomainname{\delta}}{}} \eqcomma \quad
		\seminorm{\wlG(0,x_{K}^{\quadratureindex},\cdot)}{\Hk{k+1}{\samplingdomainname{\delta}}} \leq \genericconstant \delta^{-k} \sqrt{\seminorm{\samplingdomainname{\delta}}{}} \eqcomma
		\end{align*}
		for constants $\genericconstant>0$. Then, there exists a constant $\genericconstant>0$ independent of $t,$ $h$ and $\delta$ such that we have a bound on the Frobenius norm
		\begin{align*}
		\sup_{K\in\mcal{T}_{H}, \quadratureindex\in\left\lbrace 1,\dots, Q_{K}\right\rbrace} \norm{\vec{G}^{\eff}(t,x_{K}^{\quadratureindex}) - \vec{G}^{\HMM}(t,x_{K}^{\quadratureindex})}{F} \leq C (1 + t) \left(\frac{h}{\delta}\right)^{k} \eqdot
		\end{align*}
		Under identical assumptions with $\wlG$ replaced by $\wlN$ we get
		\begin{align*}
		\sup_{K\in\mcal{T}_{H}, \quadratureindex\in\left\lbrace 1,\dots, Q_{K}\right\rbrace} \norm{\vec{J}^{\eff}(t,x_{K}^{\quadratureindex}) - \vec{J}^{\HMM}(t,x_{K}^{\quadratureindex})}{F} \leq C (1 + t) \left(\frac{h}{\delta}\right)^{k} \eqcomma
		\end{align*}
		with a constant $\genericconstant>0$ independent of $t$, $h$ and $\delta$.
	\end{lemma}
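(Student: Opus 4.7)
The plan is to mimic the bookkeeping of \cref{lem:micro_error_R} and to replace the purely elliptic Cea argument there by the Sobolev-type estimate of \cref{thm:error_Sobolev_equation}. Using the shorthand from \cref{def:short_notation_micro_lemma}, I would expand $(\vec{G}^{\eff})_{\firstindex,\secondindex}(t) - (\vec{G}^{\HMM})_{\firstindex,\secondindex}(t)$ directly from \cref{def:G_eff_reformulated_transformed} and \cref{def:G_HMM_symmetric}, and then add and subtract the mixed expression $\fint_{\samplingdomainname{\delta}} \vec{R}(\cdot/\delta)\,\grady\wlGh(t)\cdot\grady\varphi_{\firstindex}\D y$ to obtain the telescoping identity
\[
(\vec{G}^{\eff})_{\firstindex,\secondindex}(t) - (\vec{G}^{\HMM})_{\firstindex,\secondindex}(t) = \tfrac{1}{\seminorm{\samplingdomainname{\delta}}{}}\bigl[\microkappabilinearform{r}{\wlG(t) - \wlGh(t)}{\varphi_{\firstindex}} + \microkappabilinearform{r}{\wlGh(t)}{\wkM - \wkMh}\bigr].
\]
This isolates a genuinely time-dependent Sobolev-error contribution from a purely elliptic corrector contribution, and the desired $(1+t)(h/\delta)^{k}$ bound distributes naturally over the two.

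For the first summand, \cref{thm:error_Sobolev_equation} yields $\norm{\wlG(t) - \wlGh(t)}{\namemicrobilinearform{m}^{\delta}} \leq \genericconstant h^{k}(\seminorm{\wlG(0)}{\Hk{k+1}{\samplingdomainname{\delta}}} + \seminorm{\wlG(t)}{\Hk{k+1}{\samplingdomainname{\delta}}} + \intt{\seminorm{\wlG(s)}{\Hk{k+1}{\samplingdomainname{\delta}}}}{s})$. Combined with the regularity assumption \cref{eq:regularity_k_assumption_R_micro} and its propagation to all $t\in[0,T]$ (the order-$k$ analog of \cref{thm:H2_Sobolev}), this reduces to $\genericconstant(1+t)(h/\delta)^{k}\sqrt{\seminorm{\samplingdomainname{\delta}}{}}$. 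Paired with $\norm{\grady\varphi_{\firstindex}}{\Lp{2}{\samplingdomainname{\delta}}}\leq \genericconstant\sqrt{\seminorm{\samplingdomainname{\delta}}{}}$ (obtained from \cref{lem:wellposedness_wM_wG0_wN0}), the boundedness of $\vec{R}$ (cf.~\cref{def:propertyR}), and Cauchy--Schwarz, the first summand is of order $(1+t)(h/\delta)^{k}$ after the $1/\seminorm{\samplingdomainname{\delta}}{}$-scaling. For the second summand, the discrete analog of the contraction bound in \cref{lem:wellposedness_wG} gives $\norm{\wlGh(t)}{\namemicrobilinearform{m}^{\delta}} \leq \norm{\wlGh(0)}{\namemicrobilinearform{m}^{\delta}} = \mathcal{O}(\sqrt{\seminorm{\samplingdomainname{\delta}}{}})$, while the Cea-type estimate $\norm{\wkM - \wkMh}{\spaceVmic}\leq \genericconstant(h/\delta)^{k}\sqrt{\seminorm{\samplingdomainname{\delta}}{}}$ that already underlies \cref{lem:micro_error_M} handles the elliptic factor. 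This produces an $\mathcal{O}((h/\delta)^{k})$ contribution uniformly in $t$, and summing both yields the claimed Frobenius bound after taking the supremum over quadrature points.

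The argument for $\vec{J}^{\eff} - \vec{J}^{\HMM}$ is structurally identical once $\wlG$ is replaced by $\wlN$, because $\wlN$ solves the analogous homogeneous Sobolev equation \cref{def:cell_prob_wN_sys_reformulated_transformed} so that \cref{thm:error_Sobolev_equation} applies verbatim. The main obstacle, and the source of the linear-in-$t$ factor, is the uniform-in-$t$ $\Hk{k+1}{\samplingdomainname{\delta}}$-control of $\wlG(t)$ and $\wlN(t)$ that \cref{thm:error_Sobolev_equation} requires: this is not immediate from the semigroup representation \cref{eq:Sobolev_equation_solution_semigroup} and would need to be verified by transposing the proof of \cref{thm:H2_Sobolev} to higher order, typically by applying the closed operator $\Delta_{\vec{M}}$ repeatedly and exploiting the commutation identity $\Delta_{\vec{M}}^{k} w(t) = \euler^{-\Delta_{\vec{R}}\Delta_{\vec{M}}^{-1} t}\Delta_{\vec{M}}^{k} w(0)$ derived there.
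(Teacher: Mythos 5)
Your proposal follows essentially the same route as the paper's proof: a telescoping split of the error into an elliptic contribution controlled by the finite element estimate for $\wkM - \wkMh$ and a time-dependent contribution controlled by \cref{thm:error_Sobolev_equation}, with \cref{thm:H2_Sobolev} converting the $t$-dependent seminorms into bounds on the initial data and thereby producing the $(1+t)$ factor. The only cosmetic difference is which factor carries the difference: the paper adds and subtracts $\microkappabilinearform{r}{\wlG(t)}{\varphi_{\firstindex}^{h}}$, so the split reads $\microkappabilinearform{r}{\wlG(t)}{\varphi_{\firstindex}-\varphi_{\firstindex}^{h}} + \microkappabilinearform{r}{\wlG(t)-\wlGh(t)}{\varphi_{\firstindex}^{h}}$ and only the \emph{continuous} stability bounds of \cref{lem:wellposedness_wM_wG0_wN0,lem:wellposedness_wG} are needed, whereas your split pairs $\wlGh(t)$ with $\wkM-\wkMh$ and hence requires the \emph{discrete} contraction $\norm{\wlGh(t)}{\namemicrobilinearform{m}^{\delta}}\leq\norm{\wlGh(0)}{\namemicrobilinearform{m}^{\delta}}$; this holds by the same Lumer--Phillips argument on the conforming subspace $\spaceVh\subseteq\spaceVmic$ (and the discrete analog of \cref{lem:wellposedness_wM_wG0_wN0} bounds $\wlGh(0)$, as implicitly used in \cref{lem:bounded_HMM_parameters}), so the variation is harmless.

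One concrete caveat on your closing paragraph: the commutation identity $\Delta_{\vec{M}}^{k}w(t)=\euler^{-\Delta_{\vec{R}}\Delta_{\vec{M}}^{-1}t}\Delta_{\vec{M}}^{k}w(0)$ is false for variable coefficients when $k\geq 2$, since $\Delta_{\vec{M}}$ does not commute with $\Delta_{\vec{R}}\Delta_{\vec{M}}^{-1}$; conjugation actually yields $\Delta_{\vec{M}}^{k}w(t)=\euler^{-\Delta_{\vec{M}}^{k-1}\Delta_{\vec{R}}\Delta_{\vec{M}}^{-k}t}\Delta_{\vec{M}}^{k}w(0)$, and the boundedness and dissipativity of this conjugated generator would have to be established separately (requiring smoother coefficients and higher-order elliptic regularity on $\samplingdomain{\delta}{x}$). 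You correctly identified the uniform-in-$t$ $\Hk{k+1}{\samplingdomainname{\delta}}$ control as the genuine obstacle; the paper sidesteps it by explicitly restricting its proof to the case $k=1$, where \cref{thm:H2_Sobolev} is exactly the needed propagation result, and merely remarking that higher regularity gives higher order. So your argument is complete and matches the paper for $k=1$, while the sketched extension to $k\geq 2$ contains an incorrect identity and remains open in the paper as well.
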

	\begin{proof}
		We only show the case $k=1$. With the same notation as in \cref{def:short_notation_micro_lemma}, we investigate the difference, suppressing the $x_{K}^{\quadratureindex}$ variable. We use the bilinear forms $\namemicrobilinearform{m}^{\delta}$ and $\namemicrobilinearform{r}^{\delta}$ from \cref{def:bilinearform_m} and \cref{def:bilinearform_r}. The error can be expressed by \cref{def:G_eff_reformulated_transformed} and \cref{def:G_HMM_symmetric} as
		\begin{align*}
		&\left\lvert (\vec{G}^{\eff}(t))_{\firstindex,\secondindex} - (\vec{G}^{\HMM}(t))_{\firstindex,\secondindex} \right\rvert = \frac{1}{\seminorm{\unitcell^{\delta}}{}} \left\lvert \microkappabilinearform{r}{\wlG(t)}{\varphi_{\firstindex}} - \microkappabilinearform{r}{\wlGh(t)}{\varphi_{\firstindex}^{h}} \right\rvert \eqdot
		\end{align*}
		We add and subtract $\microkappabilinearform{r}{\wlG(t)}{\varphi_{\firstindex}^{h}}$ and directly use the boundedness of the parameter $\vec{R}^{\delta}$ as well as the boundedness of the solutions $\wlG(t)$ and $\varphi_{\firstindex}^{h}$ from \cref{lem:wellposedness_wM_wG0_wN0,lem:wellposedness_wG}. With a constant $\genericconstant>0$ this yields 
		\begin{align*}
		&\left\lvert (\vec{G}^{\eff}(t))_{\firstindex,\secondindex} - (\vec{G}^{\HMM}(t))_{\firstindex,\secondindex} \right\rvert \\
		&\quad\leq \frac{\genericconstant}{\seminorm{\unitcell^{\delta}}{}}\norm{\wlG(t)}{\namemicrobilinearform{m}^{\delta}} \norm{\wkM-\wkMh}{\namemicrobilinearform{m}^{\delta}} + \frac{\genericconstant}{\seminorm{\unitcell^{\delta}}{}} \norm{\wlG(t) - \wlGh(t)}{\namemicrobilinearform{m}^{\delta}} \norm{\varphi_{\firstindex}^{h}}{\namemicrobilinearform{m}^{\delta}} \\
		&\quad\leq \frac{\genericconstant}{\sqrt{\seminorm{\unitcell^{\delta}}{}}} \norm{\wkM-\wkMh}{\namemicrobilinearform{m}^{\delta}} + \frac{\genericconstant}{\sqrt{\seminorm{\unitcell^{\delta}}{}}} \norm{\wlG(t) - \wlGh(t)}{\namemicrobilinearform{m}^{\delta}} \eqdot
		\end{align*}
		The final step is to use the finite element error estimate from \cite[Theorem 3.2.2]{Ciarlet2002} in the first expression and the error result from \cref{thm:error_Sobolev_equation} in the second one. At this point, for higher regularity, i.e., $k>1$ we get higher order estimates. This gives
		\begin{align*}
		&\left\lvert (\vec{G}^{\eff}(t))_{\firstindex,\secondindex} - (\vec{G}^{\HMM}(t))_{\firstindex,\secondindex} \right\rvert \leq\frac{\genericconstant}{\sqrt{\seminorm{\unitcell^{\delta}}{}}} h \seminorm{\wkM}{\Hk{2}{\unitcell^{\delta}}} \\
		&\quad+ \frac{\genericconstant}{\sqrt{\seminorm{\unitcell^{\delta}}{}}} h \left(\seminorm{\wlG(0)}{\Hk{2}{\unitcell^{\delta}}} + \seminorm{\wlG(t)}{\Hk{2}{\unitcell^{\delta}}} + \intt{\seminorm{\wlG(s)}{\Hk{2}{\unitcell^{\delta}}}}{s}\right) \eqdot
		\end{align*}
		With the assumption on the correctors, and \cref{thm:H2_Sobolev} we get the final result. The estimate for the extra source follows with the same ideas using the definitions in \cref{def:J_eff_reformulated_transformed,def:J_HMM}.
	\end{proof}

	\begin{remark}
		See \cite[Section 5.3.2]{Freese2021} for an improved estimate in the case $t=0$.
	\end{remark}

	\subsection{Semi-discrete a priori error analysis}\label{sec:Semidiscrete a priori error analysis}
	We analyze the error between the solutions of the effective system \cref{eq:bilinear_effective_Maxwell} and the HMM system \cref{eq:bilinear_HMM_Maxwell}. Recall that we have $\dimMaxwell = 3 (2 + N_{E})$ and assume $\ell\geq 1$. Moreover, we equip the space $\spaceVH$, cf. \cref{def:spaceVH}, with the inner product $\bilinearform{}{\phi^{H}}{\psi^{H}}_{\spaceVH} = \bilinearform{m^{\HMM}}{\phi^{H}}{\psi^{H}}$ and abbreviate $\spaceX \coloneqq \Lp{2}{\Omega;\R^\dimMaxwell}$ which is equipped with the inner product $ \bilinearform{}{\cdot}{\cdot}_{\spaceX} = \bilinearform{m^{\eff}}{\cdot}{\cdot} $. Finally, let $\mrm{Z} \coloneqq \Hk{\ell+1}{\Omega;\R^\dimMaxwell}$ and denote its norm as $ \norm{\phi}{\mrm{Z}} = \norm{\phi}{\Hk{\ell+1}{\computationaldomain;\R^{\dimMaxwell}}} $.
	
	Due to the properties of $\vec{M}^{\delta}$ in \cref{def:propertiesM} and the resulting properties for the bilinear forms $m^{\eff}$ in \cref{lem:bounded_effective_parameters} and $m^{\HMM}$ in \cref{lem:bounded_HMM_parameters} the induced norms are equivalent to the standard $\Lp{2}{\computationaldomain;\R^{\dimMaxwell}}$-norm, i.e., for all $\vec{\Phi}\in\spaceX$ and $\vec{\Phi}_{H}\in\spaceVH$ we get
	\begin{subequations}\label{sys:scalarproducts_equivalent}
		\begin{align}
		&\sqrt{\coercivityconstant} \norm{\vec{\Phi}}{\Lp{2}{\Omega;\R^{\dimMaxwell}}} \leq \norm{\vec{\Phi}}{\spaceX} \leq \sqrt{\boundednessconstant} \norm{\vec{\Phi}}{\Lp{2}{\Omega;\R^{\dimMaxwell}}} \eqcomma \\
		&\sqrt{\coercivityconstant} \norm{\vec{\Phi}_{H}}{\Lp{2}{\Omega;\R^{\dimMaxwell}}} \leq \norm{\vec{\Phi}_{H}}{\spaceVH} \leq \sqrt{\boundednessconstant} \norm{\vec{\Phi}_{H}}{\Lp{2}{\Omega;\R^{\dimMaxwell}}} \eqdot
		\end{align}
	\end{subequations}
	We extend the interpolation operator for N\'{e}d\'{e}lec elements $\mcal{I}_{H}$ from \cref{def:Nedelec_interpolation} to higher dimensions by component-wise application. Additionally, since $\vec{M}^{\HMM}$ is positive definite, we may introduce $\mcal{P}_{H}:\spaceX\to\spaceVH$ such that
	\begin{align}\label{def:projection_macroscopic}
	\bilinearform{m^{\HMM}}{\mcal{P}_{H}\vec{\Phi}}{\vec{\Psi}_{H}} = \bilinearform{m^{\eff}}{\vec{\Phi}}{\vec{\Psi}_{H}} \quad \text{for all }\vec{\Phi}\in\spaceX,\vec{\Psi}_{H}\in\spaceVH \eqdot
	\end{align}
	The following procedure extends the results from \cite{Hipp2017} and \cite{Hochbruck2018} to the more general case considered in this work. We introduce the remainder operator $\Lambda$. This characterizes either the difference between the continuous and discrete Maxwell operator, or for $t\in[0,T]$ and a parameter  $\vec{S} \in \left\lbrace \vec{R},\vec{G}(t),\vec{J}(t) \right\rbrace$ the difference of the effective and HMM representations, i.e.,
	\begin{align*}
	\remainder{\vec{A}}{\vec{A}_{H}} &\coloneqq \mcal{P}_{H}(\vec{M}^{\eff})^{-1} \vec{A} - (\vec{M}^{\HMM})^{-1}\vec{A}_{H} \mcal{I}_{H} \eqcomma \\
	\remainder{\vec{S}^{\eff}}{\vec{S}^{\HMM}} &\coloneqq \mcal{P}_{H}(\vec{M}^{\eff})^{-1} \vec{S}^{\eff} - (\vec{M}^{\HMM})^{-1}\vec{S}^{\HMM} \mcal{I}_{H} \eqdot
	\end{align*}
	Next, we provide a preliminary error estimate.
	\begin{theorem}\label{thm:erroranalysis_01}
		Let $\effsolutionu$ and $\HMMsolutionu$ be the solutions of \cref{eq:bilinear_effective_Maxwell} and \cref{eq:bilinear_HMM_Maxwell}, respectively, and assume $\effsolutionu\in \Ck{1}{[0,T];\mrm{Z}}$. With $\genericconstant_{\vec{G}^{\HMM}}(t)$ given as in \cref{def:C_GH} the error of the semi-discrete HMM-solution is bounded by
		\begin{align}
		\begin{split}\label{eq:erroranalysis_first_bound}
		&\norm{\HMMsolutionu(t) - \effsolutionu(t)}{\spaceX} \leq \euler^{\genericconstant_{\vec{G}^{\HMM}}(t)} \left[\left(1+ \frac{1}{\alpha} \norm{\vec{J}^{\HMM}}{\Lp{1}{0,t;\Lp{\infty}{\computationaldomain;\R^{\dimMaxwell\times\dimMaxwell}}}}\right) \norm{\solutionu_{0,H} - \mcal{I}_{H} \solutionu_{0}}{\spaceVH}\right. \\
		&\quad+ \frac{t}{\alpha} \norm{\vec{f}^{\HMM} - \mcal{P}_{H} \vec{f}}{\Lp{\infty}{0,t;\spaceVH}} + \frac{t}{\alpha} \norm{\remainder{\vec{R}^{\eff}}{\vec{R}^{\HMM}} \effsolutionu}{\Lp{\infty}{0,t;\spaceVH}} \\
		&\quad+ \frac{t}{\alpha} \norm{\remainder{\vec{A}}{\vec{A}_{H}}\effsolutionu}{\Lp{\infty}{0,t;\spaceVH}} + \frac{t}{\alpha} \sup\limits_{s\in[0,t]} \norm{\remainder{\vec{J}^{\eff}(s)}{\vec{J}^{\HMM}(s)} \vec{u}_{0}}{{\spaceVH}} \\
		&\quad+ \frac{t}{\alpha} \sup\limits_{s\in[0,t]} \norm{\int_{0}^{s} \remainder{\vec{G}^{\eff}(s-r)}{\vec{G}^{\HMM}(s-r)} \effsolutionu(r) \D r}{{\spaceVH}} \\ 
		&\quad+\left. \frac{t}{\alpha} \norm{(\mcal{P}_{H}-\mcal{I}_{H}) \partial_{t} \effsolutionu}{\Lp{\infty}{0,t;\spaceVH}} \right] + \norm{(\mcal{I}_{H} - \identityoperator) \effsolutionu(t) }{\spaceX} \eqcomma
		\end{split}
		\end{align}
	\end{theorem}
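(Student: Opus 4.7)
My plan is to split the error through the Nédélec interpolant and reduce everything to the stability estimate of Theorem 4.7 applied to an auxiliary discrete problem. Set $\theta_H(t) \coloneqq \HMMsolutionu(t) - \mcal{I}_H \effsolutionu(t) \in \spaceVH$. By the triangle inequality together with the equivalence of norms in \cref{sys:scalarproducts_equivalent},
$$\norm{\HMMsolutionu(t) - \effsolutionu(t)}{\spaceX} \le \genericconstant\,\norm{\theta_H(t)}{\spaceVH} + \norm{(\mcal{I}_H - \identityoperator)\effsolutionu(t)}{\spaceX}.$$
The second summand is already the final term of \cref{eq:erroranalysis_first_bound}, so the whole task reduces to producing a Gronwall-type bound for $\norm{\theta_H(t)}{\spaceVH}$.

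To obtain an equation for $\theta_H$, I subtract the variational formulations \cref{eq:bilinear_effective_Maxwell} and \cref{eq:bilinear_HMM_Maxwell} after testing both with the same $\vec{\Phi}_H\in\spaceVH$. The projection identity $m^{\eff}(\vec{\Psi},\vec{\Phi}_H) = m^{\HMM}(\mcal{P}_H\vec{\Psi},\vec{\Phi}_H)$ from \cref{def:projection_macroscopic} lets me convert every $m^{\eff}$-term into an $m^{\HMM}$-term, producing the correction $m^{\HMM}((\mcal{P}_H-\mcal{I}_H)\partial_t\effsolutionu,\vec{\Phi}_H)$ in the mass-matrix piece. For each remaining block I add and subtract the corresponding $\mcal{I}_H$-evaluation to generate the $\theta_H$-contribution on the left and an $m^{\HMM}$-pairing with the remainder operator $\Lambda$ on the right, e.g.\
$r^{\HMM}(\HMMsolutionu,\vec{\Phi}_H)-r^{\eff}(\effsolutionu,\vec{\Phi}_H) = r^{\HMM}(\theta_H,\vec{\Phi}_H) - m^{\HMM}(\remainder{\vec{R}^{\eff}}{\vec{R}^{\HMM}}\effsolutionu,\vec{\Phi}_H),$
and analogously for $a$ vs $a_H$ and for the convolution with $g^{\HMM}$ vs $g^{\eff}$. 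The source-comparison collapses to $m^{\HMM}(\vec{f}^{\HMM}-\mcal{P}_H\vec{f},\vec{\Phi}_H)$, while the $\vec{J}$-block splits as
$$(\vec{J}^{\eff}(t)\vec{u}_0,\vec{\Phi}_H) - (\vec{J}^{\HMM}(t)\solutionu_{0,H},\vec{\Phi}_H)_H = m^{\HMM}\!\left(\remainder{\vec{J}^{\eff}(t)}{\vec{J}^{\HMM}(t)}\vec{u}_0,\vec{\Phi}_H\right) - (\vec{J}^{\HMM}(t)(\solutionu_{0,H}-\mcal{I}_H\vec{u}_0),\vec{\Phi}_H)_H.$$

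The outcome is that $\theta_H$ satisfies a problem of exactly the form \cref{eq:bilinear_HMM_Maxwell}: the left-hand side is $m^{\HMM}(\partial_t\theta_H,\cdot)+r^{\HMM}(\theta_H,\cdot)+\int_0^t g^{\HMM}(t-s;\theta_H(s),\cdot)\D s+a_H(\theta_H,\cdot)$, the initial value is $\theta_H(0)=\solutionu_{0,H}-\mcal{I}_H\vec{u}_0$, the effective source is the sum of the five $\Lambda$- and $(\mcal{P}_H-\mcal{I}_H)$-remainders above, and the role of the $\vec{J}^{\HMM}$-term from \cref{eq:bilinear_HMM_Maxwell} is played by $-(\vec{J}^{\HMM}(t)(\solutionu_{0,H}-\mcal{I}_H\vec{u}_0),\cdot)_H$ — precisely the coupling between $\vec{J}^{\HMM}$ and the initial data that produced the factor $\bigl(1+\tfrac{1}{\alpha}\norm{\vec{J}^{\HMM}}{L^1(\ldots)}\bigr)$ in \cref{eq:stability_Maxwell_integral_discrete}. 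Invoking \cref{thm:wellposedness_HMM} verbatim then produces a bound of the shape $\euler^{\genericconstant_{\vec{G}^{\HMM}}(t)}\bigl[(1+\tfrac{1}{\alpha}\norm{\vec{J}^{\HMM}}{L^1})\norm{\theta_H(0)}{\spaceVH}+\tfrac{t}{\alpha}\norm{\text{sources}}{L^\infty(0,t;\spaceVH)}\bigr]$, and reading off each remainder individually yields \cref{eq:erroranalysis_first_bound}.

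The main obstacle is purely bookkeeping: each of the five mismatches has to be brought into the canonical form $m^{\HMM}(\Lambda(\cdot,\cdot)\cdot,\vec{\Phi}_H)$ so that the right-hand side of the $\theta_H$-problem can be read as an $L^\infty(0,t;\spaceVH)$-source and the $\vec{J}^{\HMM}$-contribution can be recognised as the $\vec{J}^{\HMM}$-term of \cref{eq:bilinear_HMM_Maxwell} rather than a generic source. In particular, care is needed for the convolution: the term $\int_0^t g^{\HMM}(t-s;\theta_H(s),\cdot)\D s$ must stay on the left (so that \cref{thm:wellposedness_HMM} applies directly and delivers the exponential factor $\euler^{\genericconstant_{\vec{G}^{\HMM}}(t)}$), while $\int_0^t m^{\HMM}(\remainder{\vec{G}^{\eff}(t-s)}{\vec{G}^{\HMM}(t-s)}\effsolutionu(s),\cdot)\D s$ is treated as part of the source and enters the bound through its $\sup_{s\in[0,t]}\spaceVH$-norm, exactly as it appears in \cref{eq:erroranalysis_first_bound}.
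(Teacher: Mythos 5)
Your proposal follows the paper's proof essentially verbatim: the same discrete error $\vec{e}_{H}(t)=\HMMsolutionu(t)-\mcal{I}_{H}\effsolutionu(t)$, the same split via the norm equivalence \cref{sys:scalarproducts_equivalent}, the same rewriting (using \cref{def:projection_macroscopic}) of the error equation as an HMM system of the form \cref{eq:bilinear_HMM_Maxwell} with source $\vec{\widetilde{f}}^{\HMM}$ collecting the $\Lambda$- and $(\mcal{P}_{H}-\mcal{I}_{H})$-remainders and with $-\bilinearform{}{\vec{J}^{\HMM}(t)\vec{e}_{H}(0)}{\cdot}_{H}$ recognised as the $\vec{J}^{\HMM}$-term, followed by the same verbatim application of the stability estimate \cref{eq:stability_Maxwell_integral_discrete} from \cref{thm:wellposedness_HMM}. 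Your explicit algebra for the $\vec{J}$-block and the observation that the $g^{\HMM}$-convolution of the error must stay on the left correctly fill in what the paper compresses into its ``lengthy calculation,'' so the proposal is correct and matches the paper's argument.
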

	\begin{proof}
		We introduce the discrete error $\vec{e}_{H}(t) \coloneqq \HMMsolutionu(t) - \mcal{I}_{H}\effsolutionu(t) \in \spaceVH$. Observe that due to \cref{sys:scalarproducts_equivalent}, we have
		\begin{align}
		\begin{split}\label{eq:erroranalysis_errorsplit}
		\norm{\HMMsolutionu(t) - \effsolutionu(t)}{\spaceX} &\leq \norm{\vec{e}_{H}(t)}{\spaceX} + \norm{(\mcal{I}_{H}-\identityoperator)\effsolutionu(t)}{\spaceX} \\
		&\leq \frac{\sqrt{\boundednessconstant}}{\sqrt{\coercivityconstant}} \norm{\vec{e}_{H}(t)}{\spaceVH} + \norm{ (\mcal{I}_{H} - \identityoperator) \effsolutionu(t)}{\spaceX} \eqdot
		\end{split}
		\end{align}
		We consider the time derivative of the discrete error.
		For any $\vec{\Phi}_{H}\in\spaceVH$ we get
		\begin{align*}\begin{split}
		\bilinearform{m^{\HMM}}{\partial_{t}\vec{e}_{H}(t)}{\vec{\Phi}_{H}} 
		&= \bilinearform{m^{\HMM}}{\partial_{t} \HMMsolutionu(t) - \mcal{P}_{H}\partial_{t} \effsolutionu(t)}{\vec{\Phi}_{H}} + \bilinearform{m^{\HMM}}{\left(\mcal{P}_{H} - \mcal{I}_{H}\right) \partial_{t}\effsolutionu(t)}{\vec{\Phi}_{H}} \eqdot
		\end{split}
		\end{align*}
		A lengthy calculation, where we rewrite the first part on the right-hand side using the effective and HMM systems
		\cref{eq:bilinear_effective_Maxwell} and \cref{eq:bilinear_HMM_Maxwell} as well as \cref{def:projection_macroscopic}, shows that the error $\vec{e}_{H}$ itself satisfies the differential equation
		\begin{align*}
		&\bilinearform{m^{\HMM}}{\partial_{t}\vec{e}_{H}(t)}{\vec{\Phi}_{H}} + \bilinearform{r^{\HMM}}{\vec{e}_{H}(t)}{\vec{\Phi}_{H}} + \int_{0}^{t}\bilinearform{g^{\HMM}}{t-s; \vec{e}_{H}(s)}{\vec{\Phi}_{H}} \D s + \bilinearform{a_{H}}{\vec{e}_{H}(t)}{\vec{\Phi}_{H}} \\
		& \quad= - \bilinearform{}{\vec{J}^{\HMM}(t)\vec{e}_{H}(0)}{\vec{\Phi}_{H}}_{H} + \bilinearform{m^{\HMM}}{\vec{f}^{\HMM}(t)}{\vec{\Phi}_{H}} - \bilinearform{m^{\eff}}{\vec{f}(t)}{\vec{\Phi}_{H}} \\
		&\qquad + \bilinearform{r^{\eff}}{\effsolutionu(t)}{\vec{\Phi}_{H}} - \bilinearform{r^{\HMM}}{\mcal{I}_{H}\effsolutionu(t)}{\vec{\Phi}_{H}} + \bilinearform{a}{\effsolutionu(t)}{\vec{\Phi}_{H}} - \bilinearform{a_{H}}{\mcal{I}_{H}\effsolutionu(t)}{\vec{\Phi}_{H}} \\
		&\qquad+ \int_{0}^{t}\bilinearform{g^{\eff}}{t-s;\effsolutionu(s)}{\vec{\Phi}_{H}} \D s - \int_{0}^{t}\bilinearform{g^{\HMM}}{t-s; \mcal{I}_{H}\effsolutionu(s)}{\vec{\Phi}_{H}} \D s\\
		&\qquad + \bilinearform{m^{\eff}}{(\vec{M}^{\eff})^{-1}\vec{J}^{\eff}(t)\vec{u}_{0}}{\vec{\Phi}_{H}} - \bilinearform{m^{\HMM}}{(\vec{M}^{\HMM})^{-1} \vec{J}^{\HMM}(t)\mcal{I}_{H}\vec{u}_{0}}{\vec{\Phi}_{H}} \\
		&\qquad + \bilinearform{m^{\HMM}}{\left(\mcal{P}_{H} - \mcal{I}_{H}\right) \partial_{t}\effsolutionu(t)}{\vec{\Phi}_{H}} \\
		&\quad= - \bilinearform{}{\vec{J}^{\HMM}(t)\vec{e}_{H}(0)}{\vec{\Phi}_{H}}_{H} + \bilinearform{m^{\HMM}}{\vec{\widetilde{f}}^{\HMM}(t)}{\vec{\Phi}_{H}} \eqcomma
		\end{align*}
		for the right-hand side $\vec{\widetilde{f}}^{\HMM}$ given as
		\begin{align*}
		&\bilinearform{m^{\HMM}}{\vec{\widetilde{f}}^{\HMM}(t)}{\vec{\Phi}_{H}} = \bilinearform{m^{\HMM}}{\vec{f}^{\HMM}(t) - \mcal{P}_{H} \vec{f}(t)}{\vec{\Phi}_{H}} + \bilinearform{m^{\HMM}}{\remainder{\vec{R}^{\eff}}{\vec{R}^{\HMM}} \effsolutionu(t)}{\vec{\Phi}_{H}} \\ 
		&\quad+ \bilinearform{m^{\HMM}}{\remainder{\vec{A}}{\vec{A}_{H}} \effsolutionu(t)}{\vec{\Phi}_{H}} + \bilinearform{m^{\HMM}}{\intt{\remainder{\vec{G}^{\eff}(t-s)}{\vec{G}^{\HMM}(t-s)} \effsolutionu(s)}{s}}{\vec{\Phi}_{H}}\\
		&\quad+ \bilinearform{m^{\HMM}}{\remainder{\vec{J}^{\eff}(t)}{\vec{J}^{\HMM}(t)} \vec{u}_{0}}{\vec{\Phi}_{H}} + \bilinearform{m^{\HMM}}{\left(\mcal{P}_{H} - \mcal{I}_{H}\right) \partial_{t}\effsolutionu(t)}{\vec{\Phi}_{H}} \eqdot
		\end{align*}
		At this point we use the stability estimate \cref{eq:stability_Maxwell_integral_discrete} for this discrete system. Note that we need $\vec{\widetilde{f}} \in \Lp{\infty}{0,t;\spaceVH}$, which is the case due to the assumption on $\effsolutionu$ and the properties of the parameters. 
		Along with \cref{eq:erroranalysis_errorsplit} we showed the result.
	\end{proof}
	We proceed by bounding each expression in \cref{eq:erroranalysis_first_bound} separately, which yields conformity errors, that are given as differences in the bilinear forms. Hence, for discrete functions 
	and $t\in[0,T]$ we define
	\begin{align}
	\triangle\bilinearform{m}{\cdot}{\cdot} &\coloneqq \bilinearform{m^{\eff}}{\cdot}{\cdot} - \bilinearform{m^{\HMM}}{\cdot}{\cdot} \eqcomma \quad &&\triangle\bilinearform{a}{\cdot}{\cdot} \coloneqq \bilinearform{a}{\cdot}{\cdot} - \bilinearform{a_{H}}{\cdot}{\cdot} \eqcomma \nonumber  \\
	\triangle\bilinearform{r}{\cdot}{\cdot} &\coloneqq \bilinearform{r^{\eff}}{\cdot}{\cdot} - \bilinearform{r^{\HMM}}{\cdot}{\cdot} \eqcomma \quad &&
	\triangle\bilinearform{j}{t;\cdot}{\cdot} \coloneqq \bilinearform{}{\vec{J}^{\eff}(t)\cdot}{\cdot} - \bilinearform{}{\vec{J}^{\HMM}(t) \cdot}{\cdot}_{H} \eqcomma \nonumber \\
	\triangle\bilinearform{g}{t;\cdot}{\cdot} &\coloneqq \bilinearform{g^{\eff}}{t;\cdot}{\cdot} - \bilinearform{g^{\HMM}}{t;\cdot}{\cdot} \eqdot \label{def:triangle_g}
	\end{align}
	In \cite[Lemma 2.11, Theorem 3.3]{Hipp2017} two expressions from \cref{eq:erroranalysis_first_bound} are already estimated.
	\begin{lemma}[{\cite[Lemma 2.11, Theorem 3.3]{Hipp2017}}]\label{lem:erroranalysis_01}
		Let $\vec{\Phi}\in \mrm{Z}$. Then there exists a constant $\genericconstant>0$ such that
		\begin{align*}
		\norm{(\mcal{P}_{H} - \mcal{I}_{H}) \vec{\Phi}}{\spaceVH} \leq \genericconstant \norm{(\identityoperator- \mcal{I}_{H})\vec{\Phi}}{\spaceX} + \max\nolimits_{\norm{\vec{\Psi}_{H}}{\spaceVH} = 1} \seminorm{\triangle\bilinearform{m}{\mcal{I}_{H} \vec{\Phi}}{\vec{\Psi}_{H}}}{} \eqdot
		\end{align*}
		Moreover, there exists another constant $\genericconstant>0$ such that
		\begin{align*}
		\norm{\remainder{\vec{A}}{\vec{A}_{H}} \vec{\Phi}}{\spaceVH} \leq \genericconstant \norm{(\identityoperator- \mcal{I}_{H}) \vec{\Phi}}{\spaceVmac} + \max\nolimits_{\norm{\vec{\Psi}_{H}}{\spaceVH} = 1} \seminorm{\triangle\bilinearform{a}{\mcal{I}_{H}\vec{\Phi}}{\vec{\Psi}_{H}}}{} \eqdot
		\end{align*}
	\end{lemma}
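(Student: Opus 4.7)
Both bounds are duality arguments in the $\spaceVH$-inner product (which is $\bilinearform{m^{\HMM}}{\cdot}{\cdot}$), followed by the defining property of $\mcal{P}_{H}$ from \cref{def:projection_macroscopic}, a splitting via $\pm \mcal{I}_{H}\vec{\Phi}$, and finally the norm equivalence \cref{sys:scalarproducts_equivalent}. The common first move is
\begin{align*}
\norm{\vec{e}_{H}}{\spaceVH} = \sup_{\norm{\vec{\Psi}_{H}}{\spaceVH}=1} \bilinearform{m^{\HMM}}{\vec{e}_{H}}{\vec{\Psi}_{H}} \eqcomma
\end{align*}
applied to $\vec{e}_{H} = (\mcal{P}_{H}-\mcal{I}_{H})\vec{\Phi}$ and $\vec{e}_{H}=\remainder{\vec{A}}{\vec{A}_{H}}\vec{\Phi}$ respectively. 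Everything then reduces to expressing the dual pairing as an effective bilinear form (via the definition of $\mcal{P}_{H}$) that can then be manipulated into a conformity error.

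\textbf{First bound.} Using \cref{def:projection_macroscopic},
\begin{align*}
\bilinearform{m^{\HMM}}{(\mcal{P}_{H}-\mcal{I}_{H})\vec{\Phi}}{\vec{\Psi}_{H}} = \bilinearform{m^{\eff}}{\vec{\Phi}}{\vec{\Psi}_{H}} - \bilinearform{m^{\HMM}}{\mcal{I}_{H}\vec{\Phi}}{\vec{\Psi}_{H}} \eqdot
\end{align*}
Inserting $\pm \bilinearform{m^{\eff}}{\mcal{I}_{H}\vec{\Phi}}{\vec{\Psi}_{H}}$ splits the right-hand side into $\bilinearform{m^{\eff}}{(\identityoperator-\mcal{I}_{H})\vec{\Phi}}{\vec{\Psi}_{H}}$ plus $\triangle\bilinearform{m}{\mcal{I}_{H}\vec{\Phi}}{\vec{\Psi}_{H}}$. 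The first term is bounded by $\norm{(\identityoperator-\mcal{I}_{H})\vec{\Phi}}{\spaceX}\norm{\vec{\Psi}_{H}}{\spaceX}$ by Cauchy--Schwarz in the $\spaceX$-inner product, and the second is exactly the claimed conformity term. Taking the supremum over unit $\vec{\Psi}_{H}$ and using \cref{sys:scalarproducts_equivalent} to pass from $\norm{\vec{\Psi}_{H}}{\spaceX}$ to $\norm{\vec{\Psi}_{H}}{\spaceVH}$ delivers the first inequality.

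\textbf{Second bound.} Unfolding the definition of $\remainder{\vec{A}}{\vec{A}_{H}}$ and applying $\mcal{P}_{H}$'s defining property,
\begin{align*}
\bilinearform{m^{\HMM}}{\remainder{\vec{A}}{\vec{A}_{H}}\vec{\Phi}}{\vec{\Psi}_{H}} = \bilinearform{}{\vec{A}\vec{\Phi}}{\vec{\Psi}_{H}} - \bilinearform{}{\vec{A}_{H}\mcal{I}_{H}\vec{\Phi}}{\vec{\Psi}_{H}}_{H} = \bilinearform{a}{\vec{\Phi}}{\vec{\Psi}_{H}} - \bilinearform{a_{H}}{\mcal{I}_{H}\vec{\Phi}}{\vec{\Psi}_{H}} \eqdot
\end{align*}
The familiar $\pm \bilinearform{a}{\mcal{I}_{H}\vec{\Phi}}{\vec{\Psi}_{H}}$ trick produces $\bilinearform{a}{(\identityoperator-\mcal{I}_{H})\vec{\Phi}}{\vec{\Psi}_{H}} + \triangle\bilinearform{a}{\mcal{I}_{H}\vec{\Phi}}{\vec{\Psi}_{H}}$. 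Bounding the first summand is the one place that is not completely automatic: since $\bilinearform{a}{\cdot}{\cdot}$ is the $\Lp{2}{}$-pairing with $\vec{A}$, I estimate it by $\norm{\vec{A}(\identityoperator-\mcal{I}_{H})\vec{\Phi}}{}\,\norm{\vec{\Psi}_{H}}{}$, which is controlled by $\norm{(\identityoperator-\mcal{I}_{H})\vec{\Phi}}{\spaceVmac}\,\norm{\vec{\Psi}_{H}}{\spaceVH}$ thanks to the definition of the $\spaceVmac$-norm (bringing curl into play) together with \cref{sys:scalarproducts_equivalent}. The supremum then yields the second inequality.

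\textbf{Main obstacle.} The only slightly delicate point is the second summand: moving from the $\spaceX$-Cauchy--Schwarz used in the first bound to a bound in the $\spaceVmac$-norm, since $\bilinearform{a}{\cdot}{\cdot}$ only makes sense on $\Hcurl{\computationaldomain}$. Routing the curl onto $(\identityoperator-\mcal{I}_{H})\vec{\Phi}$ (rather than onto $\vec{\Psi}_{H}$) avoids the need for a discrete inverse estimate and explains why the right-hand side must contain the stronger $\spaceVmac$-norm of the interpolation error, as stated.
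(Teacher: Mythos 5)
Your proposal is correct and follows essentially the same route as the cited result \cite[Lemma 2.11, Theorem 3.3]{Hipp2017}, which the paper invokes without reproducing the proof: duality in the $m^{\HMM}$-inner product on $\spaceVH$, the defining property \cref{def:projection_macroscopic} of $\mcal{P}_{H}$, the $\pm\,\mcal{I}_{H}\vec{\Phi}$ splitting into an interpolation part and a conformity part, and the norm equivalences \cref{sys:scalarproducts_equivalent}. Your closing observation---that the curl must be routed onto $(\identityoperator-\mcal{I}_{H})\vec{\Phi}$ rather than onto $\vec{\Psi}_{H}$, which is why the second estimate carries the stronger $\spaceVmac$-norm---is precisely the mechanism at work in the original argument as well.
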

	Observe that this gives an estimate in terms of an interpolation error and a conformity error. Following the approach in \cite[Theorem 3.3]{Hipp2017} combined with the boundedness of the parameters yields the following results with a similar structure.
	\begin{lemma}\label{lem:erroranalysis_03}
		Let $\vec{\Phi}\in\mrm{Z}$. Then there exists a constant $\genericconstant>0$ such that
		\begin{align*}
		\norm{\remainder{\vec{R}^{\eff}}{\vec{R}^{\HMM}} \vec{\Phi}}{\spaceVH} \leq \genericconstant \norm{(\identityoperator-\mcal{I}_{H}) \vec{\Phi}}{\spaceX} + \max\nolimits_{\norm{\vec{\Psi}_{H}}{\spaceVH} = 1} \seminorm{\triangle\bilinearform{r}{\mcal{I}_{H} \vec{\Phi}}{\vec{\Psi}_{H}}}{\spaceVH} \eqdot
		\end{align*}
	\end{lemma}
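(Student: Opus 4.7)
My proof plan follows the pattern of \cref{lem:erroranalysis_01} (the argument from \cite[Theorem 3.3]{Hipp2017}) but for the damping term rather than the Maxwell operator.

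First, I would use the definition of $\norm{\cdot}{\spaceVH}$ as the norm induced by $\bilinearform{m^{\HMM}}{\cdot}{\cdot}$ (see the setup in \cref{sec:Semidiscrete a priori error analysis}) to write
\begin{align*}
\norm{\remainder{\vec{R}^{\eff}}{\vec{R}^{\HMM}} \vec{\Phi}}{\spaceVH} = \sup_{\norm{\vec{\Psi}_{H}}{\spaceVH} = 1} \bilinearform{m^{\HMM}}{\remainder{\vec{R}^{\eff}}{\vec{R}^{\HMM}} \vec{\Phi}}{\vec{\Psi}_{H}}\eqdot
\end{align*}
Next, I would unfold the remainder by plugging in the definitions of $\mcal{P}_{H}$ (cf. \cref{def:projection_macroscopic}) and of the bilinear forms $m^{\eff}$, $m^{\HMM}$, $r^{\eff}$, $r^{\HMM}$. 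Concretely, $\bilinearform{m^{\HMM}}{\mcal{P}_{H}(\vec{M}^{\eff})^{-1}\vec{R}^{\eff}\vec{\Phi}}{\vec{\Psi}_{H}} = \bilinearform{m^{\eff}}{(\vec{M}^{\eff})^{-1}\vec{R}^{\eff}\vec{\Phi}}{\vec{\Psi}_{H}} = \bilinearform{r^{\eff}}{\vec{\Phi}}{\vec{\Psi}_{H}}$, and similarly $\bilinearform{m^{\HMM}}{(\vec{M}^{\HMM})^{-1}\vec{R}^{\HMM}\mcal{I}_{H}\vec{\Phi}}{\vec{\Psi}_{H}} = \bilinearform{r^{\HMM}}{\mcal{I}_{H}\vec{\Phi}}{\vec{\Psi}_{H}}$. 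Thus the right-hand side collapses to $\bilinearform{r^{\eff}}{\vec{\Phi}}{\vec{\Psi}_{H}} - \bilinearform{r^{\HMM}}{\mcal{I}_{H}\vec{\Phi}}{\vec{\Psi}_{H}}$.

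I would then add and subtract $\bilinearform{r^{\eff}}{\mcal{I}_{H}\vec{\Phi}}{\vec{\Psi}_{H}}$ to obtain the decomposition
\begin{align*}
\bilinearform{r^{\eff}}{(\identityoperator - \mcal{I}_{H})\vec{\Phi}}{\vec{\Psi}_{H}} + \triangle\bilinearform{r}{\mcal{I}_{H}\vec{\Phi}}{\vec{\Psi}_{H}}\eqdot
\end{align*}
The second term is precisely the conformity error appearing on the right-hand side of the claim. For the first term, I invoke the continuity of $r^{\eff}$ from \cref{eq:boundedness_bilinearform_r_eff} together with the norm equivalences \cref{sys:scalarproducts_equivalent}, giving
\begin{align*}
\seminorm{\bilinearform{r^{\eff}}{(\identityoperator - \mcal{I}_{H})\vec{\Phi}}{\vec{\Psi}_{H}}}{} \leq \genericconstant \norm{(\identityoperator - \mcal{I}_{H})\vec{\Phi}}{\spaceX} \norm{\vec{\Psi}_{H}}{\spaceVH}\eqdot
\end{align*}
Dividing through by $\norm{\vec{\Psi}_{H}}{\spaceVH}$ and taking the supremum delivers the stated bound.

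No serious obstacle is expected; the argument is essentially a transcription of \cref{lem:erroranalysis_01} where the only change is that one uses the boundedness of $\vec{R}^{\eff}$ from \cref{lem:bounded_effective_parameters}/\cref{eq:boundedness_bilinearform_r_eff} instead of that of the Maxwell operator $\vec{A}$. The one point to be careful about is that $\vec{R}^{\eff}$ is only positive semi-definite (not coercive), but the argument only uses boundedness, so this causes no trouble. Exactly the same template, using \cref{eq:boundedness_bilinearform_g_eff} and the $\Lp{\infty}$-bound on $\vec{J}^{\eff}$ from \cref{lem:bounded_effective_parameters}, will later cover the analogous estimates for $\remainder{\vec{G}^{\eff}}{\vec{G}^{\HMM}}$ and $\remainder{\vec{J}^{\eff}}{\vec{J}^{\HMM}}$.
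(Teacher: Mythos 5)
Your proposal is correct and takes essentially the same route as the paper: the paper's proof simply follows \cite[Theorem 3.3]{Hipp2017} to arrive at exactly your decomposition $\bilinearform{r^{\eff}}{\left(\identityoperator-\mcal{I}_{H}\right)\vec{\Phi}}{\vec{\Psi}_{H}} + \triangle\bilinearform{r}{\mcal{I}_{H}\vec{\Phi}}{\vec{\Psi}_{H}}$ and then invokes the bound \cref{eq:boundedness_bilinearform_r_eff}. You have merely written out the intermediate steps (the dual representation of the $\spaceVH$-norm via $m^{\HMM}$ and the unfolding of the remainder through \cref{def:projection_macroscopic}) that the paper delegates to the cited reference.
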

	\begin{proof}
		We follow the proof of \cite[Theorem 3.3]{Hipp2017}, which yields
		\begin{align*}
		\norm{\remainder{\vec{R}^{\eff}}{\vec{R}^{\HMM}} \vec{\Phi}}{\spaceVH} = \max\nolimits_{\norm{\vec{\Psi}_{H}}{\spaceVH} = 1} \left(\bilinearform{r^{\eff}}{\left(\identityoperator- \mcal{I}_{H}\right)\vec{\Phi}}{\vec{\Psi}_{H}} + \triangle\bilinearform{r}{\mcal{I}_{H}\vec{\Phi}}{\vec{\Psi}_{H}}\right) \eqdot
		\end{align*}
		The bound \cref{eq:boundedness_bilinearform_r_eff} yields the result.
	\end{proof}
	Next, we turn our focus to the time-dependent parameters and the corresponding forms, where we start with a result concerning the convolution.
	\begin{lemma}\label{lem:erroranalysis_04}
		For all $s\in[0,T]$, $\vec{\Phi}\in \Ck{1}{[0,s];\mrm{Z}}$ there exists a constant $\genericconstant>0$ such that
		\begin{align*}
		&
		\norm{\int_{0}^{s} \remainder{\vec{G}^{\eff}(s-r)}{\vec{G}^{\HMM}(s-r)} \vec{\Phi}(r) \D r}{{\spaceVH}}\\
		&\quad\leq \genericconstant \norm{(\identityoperator-\mcal{I}_{H}) \vec{\Phi}}{\Lp{\infty}{0,s;\spaceX}} + \max\nolimits_{\norm{\vec{\Psi}_{H}}{\spaceVH} = 1} \seminorm{\int_{0}^{s} \triangle\bilinearform{g}{s-r;\mcal{I}_{H}\vec{\Phi}(r)}{\vec{\Psi}_{H}}\D r}{} \eqdot
		\end{align*}
	\end{lemma}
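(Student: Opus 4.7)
The approach is to mimic the duality argument used for Lemma~\ref{lem:erroranalysis_03}, but carried under the integral sign since $\vec{G}^{\eff}$ and $\vec{G}^{\HMM}$ are time-dependent. I would start by noting that the $\spaceVH$-norm of an element of $\spaceVH$ equals its dual norm with respect to $\bilinearform{m^{\HMM}}{\cdot}{\cdot}$, so
\begin{align*}
\norm{\int_{0}^{s}\remainder{\vec{G}^{\eff}(s-r)}{\vec{G}^{\HMM}(s-r)}\vec{\Phi}(r)\D r}{\spaceVH}
= \max_{\norm{\vec{\Psi}_{H}}{\spaceVH}=1}\bilinearform{m^{\HMM}}{\int_{0}^{s}\remainder{\vec{G}^{\eff}(s-r)}{\vec{G}^{\HMM}(s-r)}\vec{\Phi}(r)\D r}{\vec{\Psi}_{H}}\eqdot
\end{align*}

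The key algebraic step is to unfold the remainder operator using its definition. Exchanging integration with the bilinear form and applying the defining property \cref{def:projection_macroscopic} of $\mcal{P}_{H}$ to the first summand, and the identity $\bilinearform{m^{\HMM}}{(\vec{M}^{\HMM})^{-1}\vec{G}^{\HMM}(s-r)\mcal{I}_{H}\vec{\Phi}(r)}{\vec{\Psi}_{H}}=\bilinearform{g^{\HMM}}{s-r;\mcal{I}_{H}\vec{\Phi}(r)}{\vec{\Psi}_{H}}$ to the second, I obtain
\begin{align*}
\bilinearform{m^{\HMM}}{\int_{0}^{s}\remainder{\vec{G}^{\eff}(s-r)}{\vec{G}^{\HMM}(s-r)}\vec{\Phi}(r)\D r}{\vec{\Psi}_{H}}
= \int_{0}^{s}\bilinearform{g^{\eff}}{s-r;\vec{\Phi}(r)}{\vec{\Psi}_{H}}\D r - \int_{0}^{s}\bilinearform{g^{\HMM}}{s-r;\mcal{I}_{H}\vec{\Phi}(r)}{\vec{\Psi}_{H}}\D r\eqdot
\end{align*}
Adding and subtracting $\int_{0}^{s}\bilinearform{g^{\eff}}{s-r;\mcal{I}_{H}\vec{\Phi}(r)}{\vec{\Psi}_{H}}\D r$ splits the right-hand side into a term involving $(\identityoperator-\mcal{I}_{H})\vec{\Phi}$ and the conformity term $\int_{0}^{s}\triangle\bilinearform{g}{s-r;\mcal{I}_{H}\vec{\Phi}(r)}{\vec{\Psi}_{H}}\D r$ as defined in \cref{def:triangle_g}.

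For the first piece I would simply invoke the uniform-in-time continuity bound \cref{eq:boundedness_bilinearform_g_eff}, giving
\begin{align*}
\seminorm{\int_{0}^{s}\bilinearform{g^{\eff}}{s-r;(\identityoperator-\mcal{I}_{H})\vec{\Phi}(r)}{\vec{\Psi}_{H}}\D r}{}
\leq 4\left(\frac{\genericconstant_{\vec{R}}}{\coercivityconstant}\right)^{2}\boundednessconstant\cdot s\cdot\norm{(\identityoperator-\mcal{I}_{H})\vec{\Phi}}{\Lp{\infty}{0,s;\spaceX}}\norm{\vec{\Psi}_{H}}{\spaceVH}\eqcomma
\end{align*}
where the factor $s\leq T$ is absorbed into the generic constant $\genericconstant$, after using the equivalence \cref{sys:scalarproducts_equivalent} between the $\spaceX$-norm and the $\Lp{2}$-norm.

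The main obstacle here is nothing deeper than bookkeeping: one has to ensure the derivation of $\bilinearform{m^{\HMM}}{\cdot}{\cdot}$ with the inverse parameters yields precisely $\bilinearform{g^{\eff}}{\cdot;\cdot}{\cdot}$ and $\bilinearform{g^{\HMM}}{\cdot;\cdot}{\cdot}$, since one uses the exact and discrete inner products on the two sides. Unlike Lemma~\ref{lem:erroranalysis_03}, there is no need to invoke the $\Ck{1}$-regularity of $\vec{\Phi}$ in time for the argument itself; it enters only to guarantee that the integrand is well-defined pointwise and that the two integrals commute with the bilinear form. No Gronwall-type estimate is needed, because the convolution structure has already been resolved in the preceding stability result \cref{thm:wellposedness_HMM} that was used to derive \cref{thm:erroranalysis_01}.
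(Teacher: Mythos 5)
Your proposal is correct and takes essentially the same route as the paper: the paper's proof likewise writes the $\spaceVH$-norm as a maximum over $\bilinearform{m^{\HMM}}{\cdot}{\vec{\Psi}_{H}}$ with $\norm{\vec{\Psi}_{H}}{\spaceVH}=1$, unfolds the remainder operator via \cref{def:projection_macroscopic} into the split $\int_{0}^{s}\bilinearform{g^{\eff}}{s-r;(\identityoperator-\mcal{I}_{H})\vec{\Phi}(r)}{\vec{\Psi}_{H}}\D r+\int_{0}^{s}\triangle\bilinearform{g}{s-r;\mcal{I}_{H}\vec{\Phi}(r)}{\vec{\Psi}_{H}}\D r$, and concludes with the $t$-independent bound \cref{eq:boundedness_bilinearform_g_eff}. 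The only differences are cosmetic: you make explicit the absorption of the factor $s\leq T$ into the generic constant and the use of the norm equivalences \cref{sys:scalarproducts_equivalent}, both of which the paper leaves implicit.
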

	\begin{proof}
		Following \cite[Theorem 3.3]{Hipp2017} we obtain from \cref{def:projection_macroscopic,def:bilinearform_g_eff,def:bilinearform_g_HMM}
		\begin{align*}
		&
		\norm{\int_{0}^{s} \remainder{\vec{G}^{\eff}(s-r)}{\vec{G}^{\HMM}(s-r)} \vec{\Phi}(r) \D r}{{\spaceVH}}\\
		&=
		\max_{\norm{\vec{\Psi}_{H}}{\spaceVH} = 1} \bilinearform{m^{\HMM}}{\int_{0}^{s} \remainder{\vec{G}^{\eff}(s-r)}{\vec{G}^{\HMM}(s-r)} \vec{\Phi}(r) \D r}{\vec{\Psi}_{H}} \\
		& = 
		\max_{\norm{\vec{\Psi}_{H}}{\spaceVH} = 1} \left(\int\limits_{0}^{s} \bilinearform{g^{\eff}}{s-r;(\identityoperator- \mcal{I}_{H}) \vec{\Phi}(r) }{\vec{\Psi}_{H}} \D r + \int\limits_{0}^{s} \triangle\bilinearform{g}{s-r;\mcal{I}_{H} \vec{\Phi}(r)}{\vec{\Psi}_{H}} \D r\right)
		\end{align*}
		From the $t$-independent bound in \cref{eq:boundedness_bilinearform_g_eff} we obtain the result.
	\end{proof}
	The next lemma gives an estimate for the error that stems from the extra source term. 
	\begin{lemma}\label{lem:erroranalysis_05}
		For all $s\in[0,T]$ and $\vec{\Phi}\in \mrm{Z}$ there is a constant $\genericconstant>0$ such that
		\begin{align*}
		\norm{\remainder{\vec{J}^{\eff}(s)}{\vec{J}^{\HMM}(s)} \vec{\Phi}}{{\spaceVH}} \leq \genericconstant \norm{(\identityoperator- \mcal{I}_{H}) \vec{\Phi}}{\spaceX} + 	\max\nolimits_{\norm{\vec{\Psi}_{H}}{\spaceVH} = 1} \seminorm{\triangle\bilinearform{j}{s;\mcal{I}_{H} \vec{\Phi}}{\vec{\Psi}_{H}}}{} \eqdot
		\end{align*}
	\end{lemma}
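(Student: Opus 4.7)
The plan is to mimic the proof strategy used for \cref{lem:erroranalysis_03,lem:erroranalysis_04}, which is the standard Riesz/duality argument: represent the $\spaceVH$-norm as a supremum over unit test functions, expand the remainder $\Lambda$ using its definition and the defining property of the projection $\mcal{P}_H$, then split the resulting difference into an interpolation error and the conformity error $\triangle j$.

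Concretely, I would start from the identity
\begin{align*}
\norm{\remainder{\vec{J}^{\eff}(s)}{\vec{J}^{\HMM}(s)} \vec{\Phi}}{\spaceVH} = \max_{\norm{\vec{\Psi}_{H}}{\spaceVH} = 1} \bilinearform{m^{\HMM}}{\remainder{\vec{J}^{\eff}(s)}{\vec{J}^{\HMM}(s)} \vec{\Phi}}{\vec{\Psi}_{H}} \eqdot
\end{align*}
Unfolding the definition of $\Lambda$ and using the characterisation \cref{def:projection_macroscopic} of $\mcal{P}_{H}$, the first contribution satisfies $\bilinearform{m^{\HMM}}{\mcal{P}_{H}(\vec{M}^{\eff})^{-1}\vec{J}^{\eff}(s)\vec{\Phi}}{\vec{\Psi}_{H}} = \bilinearform{m^{\eff}}{(\vec{M}^{\eff})^{-1}\vec{J}^{\eff}(s)\vec{\Phi}}{\vec{\Psi}_{H}} = \bilinearform{}{\vec{J}^{\eff}(s)\vec{\Phi}}{\vec{\Psi}_{H}}$, and analogously the second contribution collapses to $\bilinearform{}{\vec{J}^{\HMM}(s)\mcal{I}_{H}\vec{\Phi}}{\vec{\Psi}_{H}}_{H}$. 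Adding and subtracting $\bilinearform{}{\vec{J}^{\eff}(s)\mcal{I}_{H}\vec{\Phi}}{\vec{\Psi}_{H}}$ and recalling the definition of $\triangle j$ from \cref{def:triangle_g} then yields
\begin{align*}
\bilinearform{m^{\HMM}}{\remainder{\vec{J}^{\eff}(s)}{\vec{J}^{\HMM}(s)} \vec{\Phi}}{\vec{\Psi}_{H}} = \bilinearform{}{\vec{J}^{\eff}(s)(\identityoperator - \mcal{I}_{H})\vec{\Phi}}{\vec{\Psi}_{H}} + \triangle\bilinearform{j}{s;\mcal{I}_{H}\vec{\Phi}}{\vec{\Psi}_{H}} \eqdot
\end{align*}

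The first term on the right is controlled by the $\Lp{\infty}{\computationaldomain;\R^{\dimMaxwell\times\dimMaxwell}}$-bound on $\vec{J}^{\eff}$ from \cref{lem:bounded_effective_parameters}, Cauchy--Schwarz, and the norm equivalences \cref{sys:scalarproducts_equivalent}, giving a factor $\genericconstant \norm{(\identityoperator - \mcal{I}_{H})\vec{\Phi}}{\spaceX}\norm{\vec{\Psi}_H}{\spaceVH}$; note that the bound on $\vec{J}^{\eff}$ supplied by \cref{lem:bounded_effective_parameters} is uniform in $s\in[0,T]$, so the constant $\genericconstant$ is $s$-independent. The second term is kept as the conformity error. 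Taking the maximum over $\vec{\Psi}_H$ with $\norm{\vec{\Psi}_H}{\spaceVH}=1$ produces the stated bound.

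I do not anticipate any real obstacle: this is a duality argument that is essentially identical to the one used in \cref{lem:erroranalysis_04} for the convolution parameter, with the sole simplification that the time integral disappears. The only minor point worth checking is that the constant arising from the boundedness of $\vec{J}^{\eff}$ is uniform in $s\in[0,T]$, which is precisely what \cref{lem:bounded_effective_parameters} delivers.
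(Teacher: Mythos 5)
Your proposal is correct and follows exactly the argument the paper intends: the paper's proof of \cref{lem:erroranalysis_05} is a one-line reference to ``the same techniques as above,'' i.e.\ the duality argument of \cref{lem:erroranalysis_03,lem:erroranalysis_04} following \cite[Theorem 3.3]{Hipp2017}, which is precisely your representation of the $\spaceVH$-norm as a maximum over unit test functions, the collapse of the two remainder contributions via \cref{def:projection_macroscopic} and the definition of $m^{\HMM}$, the add-and-subtract split into $\bilinearform{}{\vec{J}^{\eff}(s)(\identityoperator-\mcal{I}_{H})\vec{\Phi}}{\vec{\Psi}_{H}}$ plus $\triangle\bilinearform{j}{s;\mcal{I}_{H}\vec{\Phi}}{\vec{\Psi}_{H}}$, and the $s$-uniform $\Lp{\infty}{\computationaldomain;\R^{\dimMaxwell\times\dimMaxwell}}$-bound on $\vec{J}^{\eff}$ from \cref{lem:bounded_effective_parameters} together with the norm equivalences \cref{sys:scalarproducts_equivalent}. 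No gaps; your closing remark on the $s$-independence of the constant is exactly the point the paper's proof sketch highlights.
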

	\begin{proof}
		With the boundedness of $\vec{J}^{\eff}(s)$ independent of $s$ from \cref{lem:bounded_effective_parameters} and the same techniques as above we get the proposed estimate.
	\end{proof}
	Using the inequalities from \crefrange{lem:erroranalysis_01}{lem:erroranalysis_05} in \cref{eq:erroranalysis_first_bound} yields an estimate that consists of conformity errors, errors in the data, and interpolation errors. The latter ones can be bounded using the property of the interpolation $\mcal{I}_{H}$ from \cref{thm:Nedelec_interpolation}, i.e.,
	\begin{align}
	\begin{split}\label{eq:interpolation_estimates}
	\norm{(\identityoperator- \mcal{I}_{H}) \solutionu}{\Lp{\infty}{0,t;\spaceVmac}} &\leq C H^{\ell} \seminorm{\solutionu}{\Lp{\infty}{0,t;\mrm{Z}}} \eqcomma \\
	\norm{(\identityoperator- \mcal{I}_{H}) \partial_{t} \solutionu}{\Lp{\infty}{0,t,\spaceX}} &\leq C H^{\ell} \seminorm{\partial_{t} \vec{u}}{\Lp{\infty}{0,t;\mrm{Z}}} \eqdot
	\end{split}
	\end{align}
	Hence, we have to analyze the conformity errors, i.e., the expressions with $\triangle$. First we recall \cite[Lemma 4.1]{Hochbruck2018} where the conformity errors in $m$ and $a$ have been analyzed.
	\begin{lemma}[{\cite[Lemma 4.1]{Hochbruck2018}}]\label{lem:delta_m_delta_a}
		Assume that the corrector $\wM$ satisfies
		\begin{align*}
		\seminorm{\wM(\bar{x},\cdot)}{\Hk{k+1}{\samplingdomainname{\delta}}} \leq \genericconstant \delta^{-k} \sqrt{\seminorm{\samplingdomainname{\delta}}{}}\quad \text{for all quadrature points }\bar{x} \eqcomma
		\end{align*}
		for a constant $\genericconstant>0$ and furthermore
		\begin{align*}
		\vec{M}^{\eff}|_{K}\in\Wkp{\ell+1}{\infty}{K;\R^{\dimMaxwell\times \dimMaxwell}},\quad \norm{\vec{M}^{\eff}}{\Wkp{\ell+1}{\infty}{K}} \leq \genericconstant \eqcomma
		\end{align*}
		for all $K\in\mcal{T}_{H}$ with a different constant $\genericconstant>0$ independent of $\delta$ and $H$. Then, for all $\vec{\Phi}\in\mrm{Z}$ and $\vec{\Psi}_{H} \in \spaceVH$ we get
		\begin{align*}
		\seminorm{\triangle\bilinearform{m}{\vec{\Phi}}{\vec{\Psi}_{H}}}{} \leq \genericconstant \left(H^{\ell} + \left(\frac{h}{\delta}\right)^{2k}\right) \norm{\vec{\Phi}}{\mrm{Z}}\norm{\vec{\Psi}_{H}}{\spaceX} \eqcomma
		\quad
		\seminorm{\triangle\bilinearform{a}{\vec{\Phi}_{H}}{\vec{\Psi}_{H}}}{} = 0 \eqdot
		\end{align*}
	\end{lemma}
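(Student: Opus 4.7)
The plan is to treat the two statements separately. For $\triangle a$ I would rely on polynomial exactness of the quadrature rule, and for $\triangle m$ I would split the error into a macroscopic quadrature part and a microscopic (HMM) part, using \cref{lem:micro_error_M} for the latter.

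For $\triangle a$: since $\vec{A}$ acts as $\pm\curl$ on the $\vec{E}$- and $\vec{H}$-components without any coefficient, for $\vec{\Phi}_H,\vec{\Psi}_H \in \spaceVH$ the integrand $\vec{A}\vec{\Phi}_H \cdot \vec{\Psi}_H$ is a polynomial on each $K\in\mcal{T}_H$. A direct inspection of the anisotropic components of the N\'{e}d\'{e}lec polynomial space together with the degree drop under $\curl$ shows that each component of the product lies in $\mcal{Q}^{2\ell-1,2\ell-1,2\ell-1}(K) \subseteq \mcal{Q}^{2\ell,2\ell,2\ell}(K)$. By \cref{ass:quadrature} the quadrature is exact on this space, so the continuous and discrete forms coincide element by element.

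For $\triangle m$: I would introduce the auxiliary bilinear form
\[
\widetilde{m}_H(\vec{\Phi},\vec{\Psi}_H) \coloneqq \sum_{K\in\mcal{T}_H}\sum_{q=1}^{Q_K}\gamma_K^q \vec{M}^{\eff}(x_K^q)\vec{\Phi}(x_K^q)\cdot \vec{\Psi}_H(x_K^q),
\]
which applies the macroscopic quadrature to the analytic effective parameter, and then decompose $\triangle m = (m^{\eff}-\widetilde{m}_H) + (\widetilde{m}_H-m^{\HMM})$. The first summand is a purely macroscopic quadrature error: the assumed $\Wkp{\ell+1}{\infty}{K}$-regularity of $\vec{M}^{\eff}$ together with $\vec{\Phi}|_K\in\Hk{\ell+1}{K}$ and the polynomial nature of $\vec{\Psi}_H|_K$ allows a standard Bramble--Hilbert-type quadrature estimate (e.g.\ \cite[Thm.~4.1.5]{Ciarlet2002}) to produce an elementwise bound of order $H^\ell \norm{\vec{\Phi}}{\Hk{\ell+1}{K}} \norm{\vec{\Psi}_H}{\Lp{2}{K}}$; summation over $\mcal{T}_H$ followed by Cauchy--Schwarz yields the $H^\ell$ contribution. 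The second summand is a pure micro error, in which only $\vec{M}^{\eff}(x_K^q)$ is replaced by $\vec{M}^{\HMM}(x_K^q)$ at each quadrature node. Here I would apply \cref{lem:micro_error_M} to bound the Frobenius-norm factor uniformly by $\genericconstant (h/\delta)^{2k}$, and handle the remaining weighted sum using the equivalence between the discrete inner product and $\bilinearform{}{\cdot}{\cdot}_{\Lp{2}{\Omega}}$ on $\spaceVH$ (cf.\ \cref{sys:scalarproducts_equivalent}), giving the $(h/\delta)^{2k}$ contribution.

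The main obstacle I expect is the Bramble--Hilbert bookkeeping in the first summand: verifying that the product $\vec{M}^{\eff}\vec{\Phi}$ carries enough regularity element by element and tracking constants so that the global bound comes out with $\norm{\vec{\Phi}}{\mrm{Z}} \norm{\vec{\Psi}_H}{\spaceX}$ rather than stronger norms. The polynomial-exactness check for $\triangle a$ and the application of \cref{lem:micro_error_M} for the micro part are both routine once the splitting above is in place.
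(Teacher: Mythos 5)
Your proposal is correct and follows essentially the same route as the paper: the paper does not prove \cref{lem:delta_m_delta_a} itself but quotes it from \cite[Lemma 4.1]{Hochbruck2018}, whose argument is precisely your splitting of $\triangle m$ into a macroscopic quadrature error (Bramble--Hilbert, using the $\Wkp{\ell+1}{\infty}{K}$-regularity of $\vec{M}^{\eff}$) plus a micro error controlled by \cref{lem:micro_error_M}, together with, for $\triangle a$, the observation that $\vec{A}\vec{\Phi}_{H}\cdot\vec{\Psi}_{H}$ lies componentwise in $\mcal{Q}^{2\ell-1,2\ell-1,2\ell-1}(K)\subseteq\mcal{Q}^{2\ell,2\ell,2\ell}(K)$, so the quadrature \cref{ass:quadrature} is exact. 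The one bookkeeping point to make explicit is that in your micro-error term the factor involving the non-polynomial $\vec{\Phi}$ at the quadrature nodes must be bounded by $\norm{\vec{\Phi}}{\mrm{Z}}$ via the embedding $\Hk{\ell+1}{K}\hookrightarrow\Ck{0}{\overline{K}}$ (valid for $\ell\geq 1$) and shape-regular scaling, since the discrete--$\Lp{2}{\computationaldomain}$ norm equivalence you invoke applies only to the polynomial factor $\vec{\Psi}_{H}$.
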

	The bounds on the remaining conformity errors are obtained similarly to \cref{lem:delta_m_delta_a}.
	\begin{lemma}\label{lem:delta_r_g_J}
		Assume that $\wM$, $\wG(0)$ and $\wN(0)$ satisfy
		\begin{align*}
		\seminorm{\wlM(\bar{x}, \cdot)}{\Hk{k+1}{\samplingdomainname{\delta}}} , \, \seminorm{\wlG(0,\bar{x}, \cdot)}{\Hk{k+1}{\samplingdomainname{\delta}}} , \, \seminorm{\wlN(0, \bar{x}, \cdot)}{\Hk{k+1}{\samplingdomainname{\delta}}} \leq C \delta^{-k} \sqrt{\seminorm{\samplingdomainname{\delta}}{}} 
		\eqcomma
		\end{align*}
		for every quadrature point $\bar{x}$ and a constant $\genericconstant>0$ and furthermore
		\begin{align*}
		\vec{M}^{\eff}|_{K},\vec{R}^{\eff}|_{K}\in\Wkp{\ell+1}{\infty}{K;\R^{\dimMaxwell\times \dimMaxwell}},\quad \norm{\vec{M}^{\eff}}{\Wkp{\ell+1}{\infty}{K}}, \norm{\vec{R}^{\eff}}{\Wkp{\ell+1}{\infty}{K}} \leq \genericconstant \eqcomma
		\end{align*}
		for all $K\in\mcal{T}_{H}$ with a constant $\genericconstant>0$ independent of $\delta$ and $H$. Then, for all $\vec{\Phi}\in\mrm{Z}$, $\vec{\Psi}_{H} \in \spaceVH$ and $t\in[0,T]$ we get 
		\begin{align}
		\seminorm{\triangle\bilinearform{r}{\vec{\Phi}}{\vec{\Psi}_{H}}}{} &\leq \genericconstant \left(H^{\ell} + \left(\frac{h}{\delta}\right)^{2k}\right) \norm{\vec{\Phi}}{\mrm{Z}}\norm{\vec{\Psi}_{H}}{\spaceX} \eqcomma \label{eq:error_trinagle_r} \\
		\seminorm{\triangle\bilinearform{g}{t;\vec{\Phi}}{\vec{\Psi}_{H}}}{} &\leq \genericconstant \left( H^{\ell} + ( 1 + 	t) \left(\frac{h}{\delta}\right)^{k}\right) \norm{\vec{\Phi}}{\mrm{Z}}\norm{\vec{\Psi}_{H}}{\spaceX} \eqcomma \label{eq:error_trinagle_g} \\
		\seminorm{\triangle\bilinearform{j}{t;\vec{\Phi}}{\vec{\Psi}_{H}}}{} &\leq \genericconstant \left( H^{\ell} + (1 + t) \left(\frac{h}{\delta}\right)^{k}\right) \norm{\vec{\Phi}}{\mrm{Z}}\norm{\vec{\Psi}_{H}}{\spaceX} \eqdot \label{eq:error_trinagle_J}
		\end{align}
	\end{lemma}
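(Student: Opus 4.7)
The plan is to mimic \cref{lem:delta_m_delta_a} uniformly for all three bilinear forms. For a generic parameter $\vec{S}^{\eff}\in\{\vec{R}^{\eff},\vec{G}^{\eff}(t),\vec{J}^{\eff}(t)\}$ with its discrete counterpart $\vec{S}^{\HMM}$, I would split the conformity error by inserting the quadrature evaluation of the exact parameter,
\begin{align*}
\triangle s(\vec{\Phi},\vec{\Psi}_{H}) = \bigl[\bilinearform{}{\vec{S}^{\eff}\vec{\Phi}}{\vec{\Psi}_{H}} - \bilinearform{}{\vec{S}^{\eff}\vec{\Phi}}{\vec{\Psi}_{H}}_{H}\bigr] + \bilinearform{}{(\vec{S}^{\eff}-\vec{S}^{\HMM})\vec{\Phi}}{\vec{\Psi}_{H}}_{H},
\end{align*}
thereby isolating a pure quadrature error and a pure micro error.

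The first bracket, the quadrature error, is handled element-wise exactly as in the proof of \cref{lem:delta_m_delta_a}: a standard Bramble--Hilbert quadrature estimate on each $K\in\mcal{T}_{H}$ uses the assumed $\Wkp{\ell+1}{\infty}{K}$-regularity of $\vec{S}^{\eff}$ together with the polynomial structure of $\vec{\Psi}_{H}$ to yield the $H^{\ell}$ contribution, with the factor $\norm{\vec{\Phi}}{\mrm{Z}}\norm{\vec{\Psi}_{H}}{\spaceX}$ appearing naturally after applying the Cauchy--Schwarz inequality on the resulting defect. The second bracket, the micro error, is bounded pointwise: since the quadrature weights $\bar{\weight}$ are non-negative, every summand is controlled by $\norm{\vec{S}^{\eff}(\bar{x})-\vec{S}^{\HMM}(\bar{x})}{F} |\vec{\Phi}(\bar{x})| |\vec{\Psi}_{H}(\bar{x})|$, so that taking the supremum over quadrature points reduces the estimate to the micro-error bounds already proved: \cref{lem:micro_error_R} delivers the $(h/\delta)^{2k}$ contribution for $\vec{R}$ and thus \cref{eq:error_trinagle_r}, while \cref{lem:micro_error_G_first_order} delivers the $(1+t)(h/\delta)^{k}$ contribution for $\vec{G}$ and $\vec{J}$ and hence \cref{eq:error_trinagle_g,eq:error_trinagle_J}. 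Collecting these pointwise bounds via Cauchy--Schwarz in the discrete inner product $\bilinearform{}{\cdot}{\cdot}_{H}$ and using the continuous embedding $\mrm{Z}\hookrightarrow\Lp{\infty}{\computationaldomain;\R^{\dimMaxwell}}$ to pass from $|\vec{\Phi}(\bar{x})|$ to $\norm{\vec{\Phi}}{\mrm{Z}}$ recombines everything into the stated product form.

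The main obstacle lies in verifying the $\Wkp{\ell+1}{\infty}{K}$-regularity of $\vec{G}^{\eff}(t)$ and $\vec{J}^{\eff}(t)$ needed for the quadrature step, since only such regularity for $\vec{M}^{\eff}$ and $\vec{R}^{\eff}$ is explicitly assumed. This has to be inherited from the regularity of the underlying data $\vec{M}(x,\cdot)$, $\vec{R}(x,\cdot)$ in the macroscopic variable $x$, together with $x$-smoothness of the time-dependent correctors $\wlG$ and $\wlN$. That smoothness can in principle be established by differentiating the Sobolev cell equations \cref{def:cell_prob_wG_sys_reformulated_transformed,def:cell_prob_wN_sys_reformulated_transformed} with respect to $x$ and re-applying the semigroup argument of \cref{lem:wellposedness_wG}, combined with the elliptic lift used for the stationary corrector $\wlM$ in \cref{lem:delta_m_delta_a}. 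Crucially, this bootstrap is uniform in $t$, so the linear $(1+t)$ growth in \cref{eq:error_trinagle_g,eq:error_trinagle_J} enters only through the micro-error estimate \cref{lem:micro_error_G_first_order} and not through the quadrature step.
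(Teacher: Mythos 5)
Your proposal is correct and follows essentially the same route as the paper, whose proof simply invokes the argument of \cite[Lemma 4.1]{Hochbruck2018} --- precisely the quadrature-error/micro-error splitting you write out --- combined with \cref{lem:micro_error_R,lem:micro_error_G_first_order} and the uniform bounds of \cref{lem:bounded_effective_parameters}. Your concern about the missing $\Wkp{\ell+1}{\infty}{K}$-regularity of $\vec{G}^{\eff}(t,\cdot)$ and $\vec{J}^{\eff}(t,\cdot)$ in the quadrature step is apt: the paper's statement and proof are silent on this point as well (only $\Lp{\infty}{\computationaldomain}$-in-$x$ bounds uniform in $t$ are cited), so your sketched bootstrap via $x$-differentiation of the cell problems \cref{def:cell_prob_wG_sys_reformulated_transformed,def:cell_prob_wN_sys_reformulated_transformed}, uniform in $t$ by the contraction property, supplies exactly the justification the terse citation leaves implicit.
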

	\begin{proof}
		The first bound in \cref{eq:error_trinagle_r} follows as in \cite[Lemma 4.1]{Hochbruck2018} using the boundedness of $\vec{R}^{\eff}$ and \cref{lem:micro_error_R}. For the bounds in \cref{eq:error_trinagle_g} and \cref{eq:error_trinagle_J} we again use the proof of \cite[Lemma 4.1]{Hochbruck2018} combined with the uniform estimates from \cref{lem:bounded_effective_parameters,lem:micro_error_G_first_order}.
	\end{proof}
	With all preliminary results at hand, we prove the semi-discrete error estimate.
	\begin{theorem}\label{thm:semidiscrete_error_estimate}
		Let $\effsolutionu$ and $\HMMsolutionu$ be the solutions of \cref{eq:bilinear_HMM_Maxwell,eq:bilinear_effective_Maxwell} respectively and assume that $\effsolutionu\in \Ck{1}{[0,T];\mrm{Z}}$ and $\HMMsolutionu \in\Ck{0}{0,T;\spaceVH}$. Under the assumptions of \cref{lem:delta_r_g_J} with $\genericconstant_{\vec{G}^{\HMM}}(t)$ as in \cref{def:C_GH} the error of the semi-discrete HMM-solution is bounded by
		\begin{align*}
		\begin{split}\label{eq:semidiscrete_error_estimate}
		\norm{\HMMsolutionu(t) - \effsolutionu(t)}{\spaceX} &\leq \genericconstant \euler^{ \genericconstant_{\vec{G}^{\HMM}}(t)} \left(1+ t\right) \Biggl[\norm{\solutionu_{0,H} - \mcal{I}_{H} \solutionu_{0}}{\spaceVH} + \norm{\vec{f}^{\HMM} - \mcal{P}_{H} \vec{f}}{\Lp{\infty}{0,t;\spaceVH}} \\
		&\quad + \left( H^{\ell} + \left(\frac{h}{\delta}\right)^{2k}\right) \left[\norm{\effsolutionu}{\Lp{\infty}{0,t,\mrm{Z}}} + \norm{\partial_{t}\effsolutionu}{\Lp{\infty}{0,t,\mrm{Z}}}\right] \\
		&\quad+ \left(1 + t\right) \left(H^{\ell} + t \left(\frac{h}{\delta}\right)^{k}\right) \norm{\effsolutionu}{\Lp{\infty}{0,t;\mrm{Z}}} \Biggr] \eqcomma
		\end{split}
		\end{align*}
	\end{theorem}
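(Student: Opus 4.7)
The plan is to start from the preliminary estimate of \cref{thm:erroranalysis_01}, which already decomposes $\norm{\HMMsolutionu(t) - \effsolutionu(t)}{\spaceX}$ into the desired data and initial-value contributions plus five remainder-operator terms and an interpolation term. The strategy is then purely bookkeeping: bound each remainder-operator term via \cref{lem:erroranalysis_01,lem:erroranalysis_03,lem:erroranalysis_04,lem:erroranalysis_05} as a sum of an interpolation error and a conformity error, bound the conformity errors via \cref{lem:delta_m_delta_a,lem:delta_r_g_J}, and finally bound all interpolation errors via \cref{thm:Nedelec_interpolation} in the form \cref{eq:interpolation_estimates}. The exponential prefactor $\euler^{\genericconstant_{\vec{G}^{\HMM}}(t)}$ and the factor $\frac{t}{\alpha}$ in front of each remainder term are already present in \cref{eq:erroranalysis_first_bound}, so no further Gronwall argument is needed.

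More concretely, I would first apply \cref{lem:erroranalysis_01} to $(\mcal{P}_{H}-\mcal{I}_{H}) \partial_{t} \effsolutionu$ and to $\remainder{\vec{A}}{\vec{A}_{H}} \effsolutionu$; the former contributes $\norm{(\identityoperator-\mcal{I}_{H})\partial_{t}\effsolutionu}{\spaceX}$ plus a $\triangle m$ conformity error, and the latter contributes $\norm{(\identityoperator-\mcal{I}_{H})\effsolutionu}{\spaceVmac}$ plus a $\triangle a$ conformity error (which vanishes by \cref{lem:delta_m_delta_a}). Next, I would apply \cref{lem:erroranalysis_03} to $\remainder{\vec{R}^{\eff}}{\vec{R}^{\HMM}} \effsolutionu$ and \cref{lem:erroranalysis_05} to $\remainder{\vec{J}^{\eff}(s)}{\vec{J}^{\HMM}(s)} \vec{u}_{0}$. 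For the convolution term I would apply \cref{lem:erroranalysis_04}, taking the supremum over $s\in[0,t]$. Finally, \cref{lem:delta_m_delta_a,lem:delta_r_g_J} turn the five conformity expressions into the combinations $H^{\ell} + (h/\delta)^{2k}$ (for $\triangle m$ and $\triangle r$) and $H^{\ell} + (1+s)(h/\delta)^{k}$ (for $\triangle g$ and $\triangle j$), each multiplied by $\norm{\effsolutionu(s)}{\mrm{Z}}$ or $\norm{\vec{u}_{0}}{\mrm{Z}}$, which is absorbed into $\norm{\effsolutionu}{\Lp{\infty}{0,t;\mrm{Z}}}$.

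The only step requiring genuine care is the convolution contribution. The $s$-wise bound from \cref{lem:erroranalysis_04} combined with \cref{eq:error_trinagle_g} yields an integrand of order $H^{\ell} + (1 + s-r)(h/\delta)^{k}$; integrating over $r\in[0,s]$ produces a factor $s H^{\ell} + (s + s^{2}/2)(h/\delta)^{k}$, and after taking the supremum over $s\in[0,t]$ the worst-case prefactor is $t H^{\ell} + t(1+t)(h/\delta)^{k}$, which together with the $t/\alpha$ already sitting in \cref{eq:erroranalysis_first_bound} gives exactly the $(1+t)(H^{\ell} + t(h/\delta)^{k})$ factor appearing in front of $\norm{\effsolutionu}{\Lp{\infty}{0,t;\mrm{Z}}}$ in the theorem. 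Collecting all contributions, pulling the overall factor $(1+t)$ out of the bracket, and using that $\norm{\vec{u}_{0}}{\mrm{Z}} \leq \norm{\effsolutionu}{\Lp{\infty}{0,t;\mrm{Z}}}$, completes the proof. No step is conceptually hard; the main obstacle is simply tracking the polynomial-in-$t$ prefactors so that everything consolidates into the stated $\euler^{\genericconstant_{\vec{G}^{\HMM}}(t)}(1+t)[\,\cdots\,]$ form.
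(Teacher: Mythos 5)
Your architecture coincides with the paper's proof (start from \cref{thm:erroranalysis_01}, apply \crefrange{lem:erroranalysis_01}{lem:erroranalysis_05}, then interpolation plus conformity bounds), and your time-integration bookkeeping for the convolution term is correct. But there is one genuine gap, and it sits exactly at the step you dismiss as ``purely bookkeeping'': after \crefrange{lem:erroranalysis_01}{lem:erroranalysis_05}, the conformity errors appear with first argument $\mcal{I}_{H}\effsolutionu(r)$ (respectively $\mcal{I}_{H}\partial_{t}\effsolutionu$ and $\mcal{I}_{H}\solutionu_{0}$), and you feed these directly into \cref{lem:delta_m_delta_a,lem:delta_r_g_J}. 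Those lemmas require the first argument to lie in $\mrm{Z}=\Hk{\ell+1}{\computationaldomain;\R^{\dimMaxwell}}$, whereas $\mcal{I}_{H}\effsolutionu(r)\in\spaceVH$ is a piecewise-polynomial N\'ed\'elec function with only tangential continuity across element faces: it belongs to $\Hcurl{\computationaldomain}$ but in general not even to $\Hk{1}{\computationaldomain}$, so the hypotheses fail, and the quantity the lemmas would place on the right-hand side is $\norm{\mcal{I}_{H}\effsolutionu(r)}{\mrm{Z}}$ --- not $\norm{\effsolutionu(r)}{\mrm{Z}}$ as you write --- which is not even finite. You cannot simply substitute the interpolant's argument by the continuous function; the paper explicitly flags this (``the arguments $\mcal{I}_{H}\effsolutionu$ and $\mcal{I}_{H}\partial_{t}\effsolutionu$ are not in the space $\mrm{Z}$'') and identifies it as the point where the proof needs an additional idea, whereas you locate the only delicacy in the convolution's $t$-prefactors.

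The repair, which is the actual content of the paper's displayed computation following \cite[Theorem 4.5]{Hochbruck2018}, is to add and subtract the continuous function inside each bilinear form, e.g.\ for the convolution kernel
\begin{align*}
\seminorm{\triangle\bilinearform{g}{s;\mcal{I}_{H}\effsolutionu(r)}{\vec{\Psi}_{H}}}{} \leq{}& \seminorm{\bilinearform{g^{\eff}}{s;(\mcal{I}_{H}-\identityoperator)\effsolutionu(r)}{\vec{\Psi}_{H}}}{} + \seminorm{\triangle\bilinearform{g}{s;\effsolutionu(r)}{\vec{\Psi}_{H}}}{} \\
&+ \seminorm{\bilinearform{g^{\HMM}}{s;\mcal{I}_{H}\effsolutionu(r)}{\vec{\Psi}_{H}} - \bilinearform{g^{\HMM}}{s;\effsolutionu(r)}{\vec{\Psi}_{H}}}{} \eqcomma
\end{align*}
and then to invoke the $\Lp{2}{\computationaldomain;\R^{\dimMaxwell}}$-boundedness of $g^{\eff}$ and $g^{\HMM}$ from \cref{eq:boundedness_bilinearform_g_eff} and \cref{lem:bounded_HMM_parameters}, together with \cite[Lemma 4.4]{Hochbruck2018} to control the quadrature-based norm $\norm{(\mcal{I}_{H}-\identityoperator)\effsolutionu(r)}{\spaceVH}$ of a non-discrete function, and finally \cref{thm:Nedelec_interpolation}; only after this swap is \cref{lem:delta_r_g_J} applied, legitimately, with argument $\effsolutionu(r)\in\mrm{Z}$. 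The same manipulation is needed for $\triangle m$, $\triangle r$ and $\triangle j$ (while $\triangle a$ vanishes on discrete arguments, so $\remainder{\vec{A}}{\vec{A}_{H}}$ is unaffected). The extra interpolation contributions this produces are of order $H^{\ell}$ and are absorbed into terms you already track, so your final constants and the $(1+t)\bigl(H^{\ell}+t\left(h/\delta\right)^{k}\bigr)$ structure survive intact; the gap is thus reparable by a standard argument, but as written your proof skips the one step the paper's proof actually consists of.
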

	\begin{proof}
		The starting point is the estimate \cref{eq:erroranalysis_first_bound} combined with \crefrange{lem:erroranalysis_01}{lem:erroranalysis_05}. As a first step, we use the interpolation error estimates from \cref{eq:interpolation_estimates}. 
		Hence, we are left with conformity errors but the arguments $\mcal{I}_{H} \effsolutionu$ and $\mcal{I}_{H} \partial_{t} \effsolutionu$ are not in the space $\mrm{Z}$. In order to apply \cref{lem:delta_m_delta_a,lem:delta_r_g_J}, we follow \cite[Theorem 4.5]{Hochbruck2018}. The procedure is similar for all the remaining expressions. Therefore, we only consider $\triangle g$, defined in \cref{def:triangle_g}. For $r,s\in[0,t]$ and $\vec{\Psi}_{H}\in\spaceVH$ we find
		\begin{align*}
		&\seminorm{\triangle\bilinearform{g}{s;\mcal{I}_{H} \effsolutionu(r)}{\vec{\Psi}_{H}}}{} \leq \seminorm{\bilinearform{g^{\eff}}{s;\mcal{I}_{H} \effsolutionu(r)}{\vec{\Psi}_{H}} -
			\bilinearform{g^{\eff}}{s;\effsolutionu(r)}{\vec{\Psi}_{H}}}{} \\
		&\qquad+ \seminorm{\bilinearform{g^{\eff}}{s;\effsolutionu(r)}{\vec{\Psi}_{H}} -
			\bilinearform{g^{\HMM}}{s;\effsolutionu(r)}{\vec{\Psi}_{H}}}{} \\
		&\qquad+ \seminorm{\bilinearform{g^{\HMM}}{s;\mcal{I}_{H} \effsolutionu(r)}{\vec{\Psi}_{H}} - \bilinearform{g^{\HMM}}{s;\effsolutionu(r)}{\vec{\Psi}_{H}}}{} \\
		&\quad\leq \genericconstant \norm{(\mcal{I}_{H} - \identityoperator) \effsolutionu(r)}{\spaceX} \norm{\vec{\Psi}_{H}}{\spaceX} +\seminorm{\triangle\bilinearform{g}{s; \effsolutionu(r)}{\vec{\Psi}_{H}}}{} \\
		&\qquad + \genericconstant \norm{(\mcal{I}_{H} - \identityoperator) \effsolutionu(r)}{\spaceVH} \norm{\vec{\Psi}_{H}}{\spaceVH} \eqdot
		\end{align*}	
		In the last inequality we used the boundedness of the bilinear forms $g^{\eff}$ and $g^{\HMM}$ given in \cref{eq:boundedness_bilinearform_g_eff,lem:bounded_HMM_parameters}. With this, \cite[Lemma 4.4]{Hochbruck2018} and the properties of the interpolation in \cref{def:Nedelec_interpolation} we get
		\begin{align*}
		&\max_{\norm{\vec{\Psi}_{H}}{\spaceVH} = 1} \seminorm{\int\limits_{0}^{s} \triangle\bilinearform{g}{s-r;\mcal{I}_{H}\effsolutionu(r)}{\vec{\Psi}_{H}}\D r}{} \leq \int\limits_{0}^{s} \genericconstant \norm{(\mcal{I}_{H} - \identityoperator) \effsolutionu(r)}{\spaceVH} \D r \\
		&\qquad+ \int\limits_{0}^{s} \genericconstant \sqrt{\frac{\boundednessconstant}{\coercivityconstant}} \norm{(\mcal{I}_{H} - \identityoperator) \effsolutionu(r)}{\spaceX} \D r+  \max_{\norm{\vec{\Psi}_{H}}{\spaceVH} = 1} \int\limits_{0}^{s} \seminorm{\triangle\bilinearform{g}{s-r; \effsolutionu(r)}{\vec{\Psi}_{H}}}{} \D r\\
		&\quad\leq \int\limits_{0}^{s} \genericconstant H^{\ell} \seminorm{\effsolutionu(r)}{\mrm{Z}} \D r + \max_{\norm{\vec{\Psi}_{H}}{\spaceVH} = 1} \int\limits_{0}^{s} \seminorm{\triangle\bilinearform{g}{s-r; \effsolutionu(r)}{\vec{\Psi}_{H}}}{} \D r \eqdot
		\end{align*}
		Here the result from \cref{eq:error_trinagle_g} is applicable in the last expression. This yields
		\begin{align*}
		&\max_{\norm{\vec{\Psi}_{H}}{\spaceVH} = 1} \seminorm{\int_{0}^{s} \triangle\bilinearform{g}{s-r;\mcal{I}_{H}\effsolutionu(r)}{\vec{\Psi}_{H}}\D r}{} \\
		&\quad \leq \int_{0}^{s} \genericconstant H^{\ell} \seminorm{\effsolutionu(r)}{\mrm{Z}} \D r + \int_{0}^{s} \genericconstant \left( H^{\ell} + ( 1 + s - r) \left(\frac{h}{\delta}\right)^{k}\right) \norm{\effsolutionu(r)}{\mrm{Z}} \D r \eqcomma
		\end{align*}
		and finally
		\begin{align*}
		&\sup\limits_{s\in[0,t]} \max_{\norm{\vec{\Psi}_{H}}{\spaceVH} = 1} \seminorm{\int_{0}^{s} \triangle\bilinearform{g}{s-r;\mcal{I}_{H}\effsolutionu(r)}{\vec{\Psi}_{H}}\D r}{} \\
		&\quad\leq \int_{0}^{t} \left(\genericconstant H^{\ell} + \genericconstant ( 1 + t - r) \left(\frac{h}{\delta}\right)^{k}\right) \D r \norm{\effsolutionu}{\Lp{\infty}{0,t;\mrm{Z}}} \eqdot
		\end{align*}
		With this bound and similar ones for the other conformity errors we get the final semi-discrete error estimate. 
	\end{proof}

	\section{Conclusions}\label{sec:conclusions}
	Within this paper we derived a semi-discrete error estimate for the HMM applied to a general class of dispersive Maxwell systems. We provide bounds for the micro error and proved a rigorous error estimate. Along the examination a crucial $\mrm{H}^{2}$ estimate for the Sobolev equation and the \wellposedness of the microscopic and macroscopic problems has been shown. Numerical experiments, which shall be provided in the future, suggest that an improvement of the rate in \cref{lem:micro_error_G_first_order} seems to be possible. This has to be addressed by forthcoming research. 
	
	\section*{Acknowledgments}
	The author thanks D. Gallistl and C. Wieners for helpful discussions and the CRC 1173 of the Karlsruhe Institute of Technology (KIT), where this work originated, for its support.

	\bibliographystyle{abbrv}
	\bibliography{references}	
\end{document}